\numberwithin{equation}{section}
\newtheorem{theorem}{Theorem}[section]
\newtheorem{proposition}[theorem]{Proposition}
\newtheorem{corollary}[theorem]{Corollary}
\newtheorem{lemma}[theorem]{Lemma}
\newtheorem{conjecture}[theorem]{Conjecture}
\newtheorem{question}[theorem]{Question}
\newtheorem{remark}[theorem]{Remark}
\newtheorem*{rep@theorem}{\rep@title}
\newcommand{\newreptheorem}[2]{%
\newenvironment{rep#1}[1]{%
 \def\rep@title{#2 \ref{##1}}%
 \begin{rep@theorem}}%
 {\end{rep@theorem}}}
\theoremstyle{definition}
\newtheorem{defn}[theorem]{Definition}
\def\cocoa{{\hbox{\rm C\kern-.13em o\kern-.07em C\kern-.13em o\kern-.15em A}}}
\newcommand{\sign}{{\mathrm {sign}}}
\newcommand{\Hilb}{{\mathrm {Hilb}}}
\newcommand{\symm}{{\mathfrak{S}}}
\newcommand{\CC}{{\mathbb {C}}}
\newcommand{\RR}{{\mathbb {R}}}
\newcommand{\ZZ}{{\mathbb {Z}}}
\newcommand{\OOO}{{\mathcal{O}}}
\newcommand{\KK}{{\mathbb{K}}}
\newcommand{\AAA}{{\mathcal{A}}}
\newcommand{\BBB}{{\mathcal{B}}}
\newcommand{\MMM}{{\mathcal{M}}}
\newcommand{\III}{{\mathcal{I}}}
\newcommand{\LL}{{\mathfrak{L}}}
\newcommand{\OS}{{\mathrm{OS}}}
\newcommand{\ee}{{\mathbf {e}}}
\newcommand{\g}{{\mathfrak {g}}}
\newcommand{\bb}{{\mathfrak {b}}}
\newcommand{\ttt}{{\mathfrak {t}}}
\newcommand{\h}{{\mathfrak {h}}}
\newcommand{\ii}{{\mathfrak {i}}}
\newcommand{\aaa}{{\mathfrak {a}}}
\newcommand{\ccc}{{\mathfrak {c}}}
\newcommand{\ST}{{\mathcal {ST}}}
\newcommand{\stair}{{\mathrm{st}}}
\newcommand{\Der}{{\mathrm {Der}}}
\newcommand{\Hom}{{\mathrm {Hom}}}
\begin{document}

\title[Superspace coinvariants and hyperplane arrangements]
{Superspace coinvariants and hyperplane arrangements}

\author[R. Angarone]{Robert Angarone}
\author[P. Commins]{Patricia Commins}
\author[T. Karn]{Trevor Karn}
\author[S. Murai]{Satoshi Murai}
\author[B. Rhoades]{Brendon Rhoades}

\address
{School of Mathematics \newline \indent
University of Minnesota \newline \indent
Minneapolis, MN, 55455, USA}
\email{(angar017, commi010, karnx018)@umn.edu}

\address
{Department of Mathematics \newline \indent
Faculty of Education \newline \indent
Waseda University \newline \indent
1-6-1 Nishi-Waseda, Shinjuku, Tokyo, 169-8050, Japan}
\email{s-murai@waseda.jp}

\address
{Department of Mathematics \newline \indent
University of California, San Diego \newline \indent
La Jolla, CA, 92093, USA}
\email{bprhoades@ucsd.edu}

\begin{abstract}
Let $\Omega$ be the {\em superspace ring} of polynomial-valued differential forms on affine $n$-space.
The natural action of the symmetric group $\symm_n$ on $n$-space induces an action of $\symm_n$ on $\Omega$.
The {\em superspace coinvariant ring} is the quotient $SR$ of $\Omega$ by the ideal generated by $\symm_n$-invariants with vanishing constant term.
We give the first explicit basis of $SR$, proving a conjecture of Sagan and Swanson. 
Our techniques use the theory of hyperplane arrangements. We relate $SR$ to instances of the 
Solomon--Terao algebras of Abe--Maeno--Murai--Numata and use exact sequences relating the derivation modules of 
certain `southwest closed' arrangements to obtain the desired basis of $SR$.
\end{abstract}

\maketitle

\section{Introduction}
\label{Introduction}

Let $\KK$ be a field of characteristic 0 and let $n$ be a positive integer.
We let $(x_1, \dots, x_n)$ be a list of $n$ commuting variables and write $S := \KK[x_1, \dots, x_n]$
for the polynomial ring over these variables.
The {\em superspace} ring of rank $n$ is the tensor product
\begin{equation}
\Omega := \KK[x_1, \dots, x_n] \otimes \wedge \{ \theta_1, \dots, \theta_n \}
\end{equation}
where 
$(\theta_1, \dots, \theta_n)$ is a list of $n$ anticommuting variables.
The ring  $\Omega$ is a bigraded $\KK$-algebra
with $\Omega_{i,j} = \KK[x_1, \dots, x_n]_i \otimes \wedge^j \{ \theta_1, \dots, \theta_n \}$.
Borrowing terminology from physics, 
we refer to the $x$-variables as {\em bosonic}  and the $\theta$-variables as {\em fermionic}.
Regarding $\Omega$ as an algebra of polynomial-valued
differential forms on affine $n$-space, we have the {\em Euler operator} or {\em total derivative}
$d: \Omega \rightarrow \Omega$ given by
\begin{equation}
d f := \sum_{i \, = \, 1}^n \partial_i(f) \cdot \theta_i,
\end{equation}
where the partial derivative $\partial_i = \partial/\partial x_i$ treats $\theta$-variables as constants.
The symmetric group $\symm_n$ acts `diagonally' on $\Omega$ by subscript permutation, viz.
\begin{equation}
w \cdot x_i := x_{w(i)} \quad \quad 
w \cdot \theta_i := \theta_{w(i)} \quad \quad 
(w \in \symm_n, \, \, 1 \leq i \leq n).
\end{equation}

Quotients of the ring $\Omega$ have received increasing attention in recent years
with ties to  Chern plethysm \cite{BRT},
 Tutte polynomials of vector configurations \cite{RTW},
 and delta conjecture modules \cite{RWVan, RWSuper}.
A {\em differential ideal} is a (two-sided) ideal in $\Omega$ which is closed under the Euler operator $d$.
One way to construct a bigraded 
differential ideal in $\Omega$ is to take the {\em differential closure} of a singly graded ideal $J \subseteq S$: the smallest differential
ideal in $\Omega$ which contains $J$.
Consider the type A {\em coinvariant ideal} $(S^{\symm_n}_+) \subset S$ generated by $\symm_n$-invariants with vanishing constant term.
We write $I \subset \Omega$ for the differential closure of $(S^{\symm_n}_+)$.  
Solomon proved \cite{Solomon} that $I$ admits a straightforward generating set: it is generated by $\symm_n$-invariants in $\Omega$ with vanishing constant term.
The {\em superspace coinvariant ring} is the quotient
\begin{equation}
SR := \Omega/I
\end{equation}
of $\Omega$ by $I$.  The ring $SR$ is a bigraded $\symm_n$-module.

The superspace coinvariant ring $SR$ has been the subject of a significant amount of recent research.
In 2018, the Fields Institute Combinatorics Group\footnote{Nantel Bergeron, Laura Colmenarajo, Shu Xiao Li, John Machacek, Robin Sulzgruber, and Mike Zabrocki}
made a family of conjectures on the algebraic structure of $SR$ which predicted its bigraded $\symm_n$-character; see \cite{Zabrocki}.
Swanson--Wallach \cite{SW1,SW2} gave a conjectural description of the Macaulay inverse system of $SR$ and
Sagan--Swanson \cite{SS} gave a conjectural basis of $SR$.
Despite superficial similarity between $SR$ and other algebraic objects \cite{HRS, RWVan, RWSuper} which exhibit analogous algebraic behavior,
these conjectures resisted proof. 
A large source of difficulty was the inscrutable Gr\"obner theory of 
the ideal $I \subset \Omega$, making the quotient $SR = \Omega/I$ resistant to `straightening' arguments.


In 2023, Rhoades and Wilson \cite{RW} used algebraic techniques to calculate the bigraded Hilbert series
of $SR$, confirming a conjecture of the Fields Group, and proved the Operator Conjecture of Swanson--Wallach \cite{SW1,SW2}
on the Macaulay inverse system $I^\perp \subseteq \Omega$.
The Hilbert series of $SR$ is consistent with a conjectural monomial basis of $SR$ predicted by Sagan and Swanson.
In order to state the Sagan--Swanson conjecture, we need some notation.

For any subset $J \subseteq [n] := \{1, \dots, n \}$,  the
{\em $J$-staircase} $(\stair(J)_1, \dots, \stair(J)_n)$ is defined recursively by
\begin{equation}\label{eqn:staircase}
\stair(J)_1 = \begin{cases}
1 & 1 \notin J, \\
0 & 1 \in J,
\end{cases}  \quad \quad
\stair(J)_i = \begin{cases}
\stair(J)_{i-1} + 1 & i \notin J, \\
\stair(J)_{i-1} & i \in J.
\end{cases}
\end{equation}
For example, if $n = 5$ and $J = \{2,4\}$ we have $(\stair(J)_1, \dots, \stair(J)_5) = (1,1,2,2,3)$.
We let $\MMM(J)$ be the family of bosonic monomials
\begin{equation}
\MMM(J) := \{ x_1^{a_1} \cdots x_n^{a_n} \,:\, a_i < \stair(J)_i \}
\end{equation}
which fit under the $J$-staircase.  In our example $n = 5$ and $J = \{2,4\}$ we have
\begin{equation*}
\MMM(J) = \{ x_3 x_4 x_5^2, \; x_3 x_4 x_5,\;  x_3 x_4,\;  x_3 x_5^2,\;  x_3 x_5,\;  x_3,\;  x_4 x_5^2,\;  x_4 x_5,\;  x_4,\;  x_5^2,\;  x_5,\;  1 \}.
\end{equation*}
Observe that $\MMM(J) = \varnothing$ when $1 \in J$.
Given a subset $J \subseteq [n]$, let $\theta_J$ be the corresponding product of $\theta$-variables with increasing subscripts.
The {\em superspace Artin monomials} are given by
\begin{equation}
\MMM := \bigsqcup_{J \, \subseteq \, [n]} \MMM(J) \cdot \theta_J.
\end{equation}
When $n = 3$ we have
\begin{equation*}
\MMM = \{ x_2 x_3^2, x_2 x_3, x_2, x_3^2, x_3, 1 \} \sqcup \{ x_2 x_3 \theta_3, x_2 \theta_3, x_3 \theta_3, \theta_3 \} \sqcup \{ x_3 \theta_2, \theta_2 \} \sqcup \{ \theta_2 \theta_3 \}.
\end{equation*}

\begin{conjecture}
\label{conj:ss}
{\em (Sagan--Swanson \cite{SS})}
The set $\MMM$ descends to a basis of $SR$.
\end{conjecture}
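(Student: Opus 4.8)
The plan is to realize $SR$ --- more precisely, the associated graded of a natural filtration on each fermionic component of $SR$ --- as a direct sum of Solomon--Terao algebras attached to subarrangements of the braid arrangement $\mathcal{A}_{n-1}=\{x_i=x_j:1\le i<j\le n\}$, and then to read the basis $\MMM$ off from explicit monomial bases of those algebras. Since Rhoades--Wilson computed $\mathrm{Hilb}(SR;q,z)$ and a short bookkeeping argument gives $\#(\MMM\cap\Omega_{i,j})=\dim_{\KK}SR_{i,j}$ for all $i,j$, it would in principle suffice to prove that $\MMM$ spans $SR$ (equivalently, that its image in $SR$ is linearly independent) --- though the model we construct re-derives the Hilbert series anyway, so this input can be bypassed.

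First I would fix a fermionic degree $j$. By Solomon's theorem $I=(e_1,\dots,e_n,de_1,\dots,de_n)$, so the fermionic component $SR_{*,j}$ is the quotient of the free $S$-module $\Omega_{*,j}=\bigoplus_{|J|=j}S\,\theta_J$ by the submodule generated by the $e_k\theta_J$ together with the mixing relations $de_k\wedge\theta_{J'}=\sum_{i\notin J'}\pm\,\partial_i(e_k)\,\theta_{J'\cup\{i\}}$ with $|J'|=j-1$; modding out the $de_k$-relations alone yields an $S$-torsion module supported on $\mathcal{A}_{n-1}$ (the $de_k$ are generically independent, with dependence locus the discriminant $\prod_{i<j}(x_i-x_j)$). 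The crux is to show that $SR_{*,j}$ carries a filtration whose $J$-th associated graded piece ($|J|=j$) is naturally identified with the Solomon--Terao algebra $\mathrm{ST}(\mathcal{A}_J,\eta)=S/\langle\xi(\eta):\xi\in D(\mathcal{A}_J)\rangle$, where $\eta=x_1^2+\cdots+x_n^2$, $D(\mathcal{A}_J)$ is the module of logarithmic derivations, and $\mathcal{A}_J\subseteq\mathcal{A}_{n-1}$ is the \emph{southwest closed} subarrangement attached to $J$ --- the one whose edge set $\{(i,j):i<j\}$ is closed under decreasing the larger index and increasing the smaller. Granting this, $\gr SR=\bigoplus_{J\subseteq[n]}\mathrm{ST}(\mathcal{A}_J,\eta)\,\theta_J$, with the $J$-summand in fermionic degree $|J|$ and vanishing exactly when $1\in J$, matching $\MMM(J)=\varnothing$; the case $j=0$ is the classical identity $\mathrm{ST}(\mathcal{A}_{n-1},\eta)=S/(x_1^k+\cdots+x_n^k:1\le k\le n)=S/(e_1,\dots,e_n)$, coming from the basis $\{\sum_i x_i^k\partial_i\}$ of $D(\mathcal{A}_{n-1})$ and Newton's identities.

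It then remains to prove that $\MMM(J)=\{x_1^{a_1}\cdots x_n^{a_n}:a_i<\stair(J)_i\}$ descends to a $\KK$-basis of $\mathrm{ST}(\mathcal{A}_J,\eta)$ for every $J$. Each $\mathcal{A}_J$ is the graphic arrangement of a chordal graph, hence free, and a perfect elimination order shows its exponents are $(\stair(J)_1-1,\dots,\stair(J)_n-1)$, so $\MMM(J)$ at least has the right cardinality $\prod_i\stair(J)_i$. To upgrade this to a genuine basis I would induct on $n$ and on the poset of southwest closed subarrangements, using the addition--deletion short exact sequences relating $D(\mathcal{A})$, $D(\mathcal{A}\setminus H)$ and the restriction $D(\mathcal{A}^H)$ --- with $H$ chosen so that both $\mathcal{A}\setminus H$ and $\mathcal{A}^H$ remain southwest closed (the restriction being such an arrangement on fewer elements) --- together with the induced short exact sequences of Solomon--Terao algebras in the sense of Abe--Maeno--Murai--Numata. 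At each stage one checks that the monomials of $\MMM(J)$ split compatibly along the sequence into those accounted for by $\mathcal{A}\setminus H$ and those accounted for by $\mathcal{A}^H$, which forces $\MMM(J)$ to remain a basis; the base case is the Artin basis $\MMM(\varnothing)$ of $S/(e_1,\dots,e_n)$.

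Granting all of this, $\MMM=\bigsqcup_J\MMM(J)\,\theta_J$ descends to a family whose image in $\gr SR$ is a basis, hence $\MMM$ is a basis of $SR$. I expect the main obstacle to be the interface between the two sides: constructing the filtration of $SR_{*,j}$ and proving that the mixing relations $de_k\wedge\theta_{J'}$, after triangularization, really do carve out exactly the Solomon--Terao ideals of the $\mathcal{A}_J$ (so that $\gr SR$ is $\bigoplus_J\mathrm{ST}(\mathcal{A}_J,\eta)\,\theta_J$ and not a proper quotient), and then engineering the southwest closed exact sequences so that they are compatible not merely with Hilbert series but with the explicit monomial bases $\MMM(J)$, keeping track of the degree shifts and of which hyperplane to delete at each step. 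By contrast, Solomon's generation of $I$, the freeness of chordal arrangements, and the final dimension count are classical or routine.
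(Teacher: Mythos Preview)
Your overall architecture---reduce to a family of Solomon--Terao algebras indexed by $J\subseteq[n]$, prove each is free with the right exponents, and extract the monomial basis $\MMM(J)$ inductively via addition--deletion short exact sequences---is exactly the paper's strategy. But the proposal has two genuine gaps at the interface you yourself flag.

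First, the ``crux'' step (constructing a filtration on $SR_{*,j}$ and identifying its associated graded pieces as the $\ST(\AAA_J,\eta)$) is not a step you prove; it is precisely the content of the Rhoades--Wilson transfer principle (Theorem~\ref{thm:rw}), already available as a black box. The paper takes this as input: it reduces directly to showing $\MMM(J)$ is a basis of the commutative quotient $S/((S^{\symm_n}_+):f_J)$. You should do the same rather than attempt to re-derive it.

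Second, and more seriously, you never actually specify the arrangement $\AAA_J$---the phrase ``the southwest closed subarrangement attached to $J$\ldots the one whose edge set is closed under decreasing the larger index and increasing the smaller'' describes a \emph{property} (in fact, closure under both operations gives ideal arrangements, indexed by Hessenberg functions rather than arbitrary subsets), not a map $J\mapsto\AAA_J$. The paper resolves this with a key twist you are missing: it works in the \emph{augmented} braid arrangement $\AAA_{\widetilde{\Phi}^+}$ (adding the coordinate hyperplanes $x_i=0$) and uses the degree-zero map $\ii:\partial_i\mapsto 1$ rather than $\aaa:\partial_i\mapsto x_i$. With these choices one gets a clean colon-ideal identity $\ii_\AAA=(S^{\symm_n}_+):\bigl(Q(\AAA_{\widetilde{\Phi}^+})/Q(\AAA)\bigr)$ for any free essential subarrangement $\AAA\subseteq\AAA_{\widetilde{\Phi}^+}$ (Proposition~\ref{prop:st-general}), whose proof rests on a nontrivial ``cospanning'' criterion (Lemma~\ref{lem:cospan_containment}) for when a product of roots lies in $(S^{\symm_n}_+)$. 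Even then the natural arrangement $\AAA_J$ with $\ST(\AAA_J,\ii)=S/((S^{\symm_n}_+):f_J)$ is \emph{not} southwest, so the monomial-basis induction cannot be run on it directly; instead one enlarges to a genuine southwest arrangement $\AAA=\AAA_J\cup\{H_{x_k}:k\in J\}$, proves the monomial basis there, and pulls back linear independence along the injection $S/((S^{\symm_n}_+):f_J)\hookrightarrow\ST(\AAA,\ii)$ given by multiplication by $\prod_{j\in J}x_j$. Your proposal, working inside $\AAA_{n-1}$ with $\eta=\sum x_i^2$, has no mechanism to produce the $f_J$ (which involve the factors $x_j$) and no analogue of this embedding step.
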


We will prove Conjecture~\ref{conj:ss}, giving the first explicit basis of the quotient $SR$; see Corollary~\ref{cor:ss}.
Our starting point is the following transfer principle of Rhoades and Wilson \cite{RW}
which allows us to trade a single problem in supercommutative algebra for a family of problems in commutative algebra.
Given $J \subseteq [n]$, let $f_J \in S$ be the polynomial
\begin{equation}
f_J := \prod_{j \, \in \, J} x_j \times \left( \prod_{i \, = \, j+1}^n (x_j - x_i) \right).
\end{equation}
Recall that if $\aaa \subseteq S$ is an ideal and $f \in I$, the {\em colon ideal} (or {\em ideal quotient}) $\aaa : f \subseteq S$ is given by
\begin{equation}
\aaa : f = \{ g \in S \,:\, fg \in \aaa \}.
\end{equation}
It is not hard to see that $\aaa : f$ is an ideal in $S$ containing $\aaa$.
Rhoades and Wilson proved \cite{RW} that the colon ideals $(S^{\symm_n}_+) : f_J$ exhibit the following dichotomy:
\begin{enumerate}
\item if $1 \in J$ then $(S^{\symm_n}_+) : f_J$ is the unit ideal $S$, and
\item if $1 \notin J$ then $(S^{\symm_n}_+) : f_J$ is generated by a homogeneous regular sequence 
$p_{J,1}, \dots, p_{J,n} \in S$ with $\deg(p_{J,i}) = \stair(J)_i$.
\end{enumerate}
The polynomials $p_{J,i}$ appearing in \cite{RW} have a complicated definition involving partial derivatives.

\begin{theorem}
\label{thm:rw}
{\em (Rhoades--Wilson \cite{RW})}
For each $J \subseteq [n]$, suppose that $\BBB(J) \subseteq S$ is a set of homogeneous polynomials which descends to a basis
of $S/( (S^{\symm_n}_+) : f_J)$. Then 
\begin{equation*}
\BBB := \bigsqcup_{J \, \subseteq \, [n]} \BBB(J) \cdot \theta_J
\end{equation*}
descends to a basis of $SR$.
\end{theorem}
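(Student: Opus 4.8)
The plan is to prove the theorem one fermionic degree at a time, by filtering $\Omega$ along the fermionic monomials $\theta_J$ so that the statement becomes a family of leading-term computations in the polynomial ring $S$.

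First I would fix a system of basic invariants $f_1,\dots,f_n$ of $S$ and record that $I = (f_1,\dots,f_n,\,d f_1,\dots,d f_n)$: by Solomon's theorem (quoted above) $I$ is generated by the $\symm_n$-invariants of $\Omega$ with vanishing constant term, and by Solomon's structure theorem these invariants are polynomial in the $f_i$ and the forms $d f_i$. Every generator is homogeneous in the fermionic degree, so $I$ --- hence $SR$ --- splits as a direct sum over the fermionic degree $k$; quotienting out the bosonic generators first yields
\[
SR \;\cong\; \bigl(R\otimes\wedge\{\theta_1,\dots,\theta_n\}\bigr)\big/\bigl(\overline{d f_1},\dots,\overline{d f_n}\bigr),
\]
where $R = S/(S^{\symm_n}_+)$ is the classical coinvariant ring and $\overline{d f_i}$ is the image of $d f_i$; in particular the degree-$k$ piece $SR^{(k)}$ is a module over $R$ generated by the classes of the $\theta_J$ with $|J| = k$.

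Next, for each $k$ I would choose a total order $\prec$ on the $k$-subsets of $[n]$ --- roughly, one in which subsets concentrated among the small indices are large --- and filter the free $S$-module $\Omega^{(k)} = \bigoplus_{|J|=k} S\,\theta_J$ by the submodules $F_{\preceq J} := \bigoplus_{J'\preceq J} S\,\theta_{J'}$, with $F_{\preceq J}/F_{\prec J}\cong S$. A short computation (the modular law) shows that the induced filtration of $SR^{(k)}$ has $J$-th associated graded piece $S/\initial_J(I)$, where $\initial_J(I) := \{\, g\in S : g\,\theta_J \in I + F_{\prec J}\,\}$ is the ideal of $\theta_J$-leading coefficients of elements of $I$; note $\initial_J(I) \supseteq S^{\symm_n}_+$ automatically. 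The heart of the proof is the identification
\[
\initial_J(I) \;=\; (S^{\symm_n}_+) : f_J \qquad\text{for every } J\subseteq[n].
\]
Granting this, $\gr SR \cong \bigoplus_{J\subseteq[n]} S/\bigl((S^{\symm_n}_+):f_J\bigr)$ as bigraded vector spaces. Since the filtration is by bigraded $S$-submodules and each $\BBB(J)$ consists of homogeneous polynomials descending to a basis of $S/((S^{\symm_n}_+):f_J)$, the symbols of the elements of $\bigsqcup_J \BBB(J)\theta_J$ form a basis of $\gr SR$, and a homogeneous subset of $SR$ whose symbols form a basis of $\gr SR$ is itself a basis of $SR$. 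This proves the theorem.

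The only genuine difficulty is the identification $\initial_J(I) = (S^{\symm_n}_+):f_J$. The inclusion $\supseteq$ should be constructive: given $g$ with $g f_J\in(S^{\symm_n}_+)$, one must exhibit an element of $I = (S^{\symm_n}_+)\,\Omega + \sum_i (d f_i)\,\Omega$ whose $\prec$-leading term is $g\,\theta_J$, and the polynomial $f_J = \prod_{j\in J}x_j\prod_{i>j}(x_j-x_i)$ should emerge from a Cramer's-rule computation against the Jacobian matrix $(\partial_j f_i)_{i,j}$, whose minors are (partial) Vandermonde-type determinants; this is presumably the point where the `southwest closed' subarrangements of the braid arrangement from the abstract enter. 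The reverse inclusion $\subseteq$ --- that no unexpected leading terms occur --- is more delicate, and I would obtain it by a Hilbert-series squeeze: from $\supseteq$ each $S/\initial_J(I)$ is a quotient of $S/((S^{\symm_n}_+):f_J)$, so $\Hilb(\gr SR) = \sum_J \Hilb(S/\initial_J(I)) \le \sum_J \Hilb(S/((S^{\symm_n}_+):f_J))$ coefficientwise; but $\Hilb(\gr SR) = \Hilb(SR)$, which by the Hilbert-series computation of Rhoades--Wilson together with the colon-ideal dichotomy quoted above equals exactly $\sum_J \Hilb(S/((S^{\symm_n}_+):f_J))$, forcing equality and hence $\initial_J(I) = (S^{\symm_n}_+):f_J$ for every $J$. (If one prefers not to import that Hilbert series, the inclusion $\subseteq$ can instead be proved directly by a more laborious analysis of leading terms tailored to the order $\prec$.)
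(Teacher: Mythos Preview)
The paper does not prove this theorem: it is quoted from \cite{RW} as a black box (``Theorem~\ref{thm:rw} was crucial for calculating the Hilbert series of $SR$'') and used as the transfer principle on which the rest of the paper is built. So there is no in-paper proof to compare against; one can only assess your proposal on its own terms.

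Your filtration-by-$\theta_J$ and leading-term-ideal strategy is the natural one and is, in outline, the shape of the argument in \cite{RW}. However, your execution has a genuine circularity. For the inclusion $\initial_J(I)\subseteq (S^{\symm_n}_+):f_J$ you invoke the Rhoades--Wilson Hilbert-series computation of $SR$ to close the squeeze. But in \cite{RW} that Hilbert series is \emph{derived from} Theorem~\ref{thm:rw} together with the colon-ideal dichotomy; the present paper says so explicitly. So you are using the consequence of the theorem to establish the theorem. Your parenthetical alternative (``a more laborious analysis of leading terms'') is the actual content of \cite{RW}, and you give no indication of how it goes; that is precisely the missing idea.

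Two further remarks. First, your guess that the ``southwest closed'' arrangements enter at the Cramer's-rule step is a misreading of the paper's architecture: southwest arrangements are the innovation of the \emph{present} paper to pass from Theorem~\ref{thm:rw} to the Sagan--Swanson basis (Sections~\ref{Southwest}--\ref{Artin}); they play no role in the \cite{RW} proof of Theorem~\ref{thm:rw} itself. Second, the constructive inclusion $\supseteq$ (producing, from $g f_J\in (S^{\symm_n}_+)$, an element of $I$ with $\theta_J$-leading coefficient $g$) is only gestured at. In \cite{RW} this is where the ``complicated definition involving partial derivatives'' of the regular sequence $p_{J,i}$ actually does work; your Jacobian/Cramer intuition is pointing in the right direction, but it is not yet an argument.
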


Theorem~\ref{thm:rw} was crucial for calculating the Hilbert series of $SR$ and gives a recipe for finding bases of $SR$.
The Sagan--Swanson Conjecture~\ref{conj:ss} would follow if we could show that $\MMM(J)$ descends to a basis of the 
commutative quotient $S/( (S^{\symm_n}_+) : f_J)$.
However, prior to this paper, this recipe resisted execution. 
Although the set $\MMM(J)$ was shown \cite{RW} to descend to a basis of $S/( (S^{\symm_n}_+) : f_J)$ when $J = \{r,r+1, \dots, n\}$ is a `terminal' subset of $[n]$,
the mysterious Gr\"obner theory of the ideals $(S^{\symm_n}_+) : f_J$ obstructed a proof for general $J$.
In this paper we will prove (Theorem~\ref{thm:ss-J})
that $\MMM(J)$ descends to a basis for all $J$ by developing a surprising connection to the theory of
hyperplane arrangements.

Let $V = \KK^n$ be the standard $n$-dimensional vector space over $\KK$.
In this paper, a {\em hyperplane} will be a linear codimension 1 subspace $H \subset V$ and a {\em hyperplane arrangement}
will be a finite set $\AAA$ of hyperplanes. If $\alpha_H \in S$ is a linear form with $\mathrm{Ker}(\alpha_H) = H$, we set $Q(\AAA) := \prod_{H \in \AAA} \alpha_H$. The polynomial $Q(\AAA) \in S$ is defined up to a nonzero scalar.
Two arrangements which will be important to us are the 
{\em braid arrangement} $\AAA_{\Phi^+}$ given by the kernels of the linear forms
\begin{equation}
\Phi^+ := \{ x_i - x_j \,:\, 1 \leq i < j \leq n \}
\end{equation}
and the {\em augmented braid arrangement} $\AAA_{\widetilde{\Phi}^+}$ corresponding to 
\begin{equation}
\widetilde{\Phi}^+ := \{ x_i - x_j \,:\, 1 \leq i < j \leq n \} \cup \{ x_1, \dots, x_n \}
\end{equation}
where $x_1, \dots, x_n \in V^*$ are the usual coordinate functions.

The derivation module $\Der(S)$ of the polynomial ring $S$ is
the free $S$-module of rank $n$ given by
\begin{equation}
\Der(S) = \bigoplus_{i \, = \, 1}^n S \cdot \partial_i
\end{equation}
where $\partial_i: S \rightarrow S$ acts by differentiation with respect to $x_i$. Let $I_\AAA = Q(\AAA) S$ be the ideal of polynomials which vanish on $\AAA$. A derivation $\rho \in \Der(S)$ is a {\em derivation of $\AAA$} if $\rho(I_\AAA) \subseteq I_\AAA$, or equivalently if $Q(\AAA) \mid \rho(Q(\AAA))$. Let $\Der(\AAA)$ denote the $S$-submodule of $\Der(S)$ consisting of all derivations of $\AAA$. The arrangement $\AAA$ is \emph{free} if $\Der(\AAA)$ is a free $S$-module. For example, the arrangement $\AAA_{\Phi^+}$ is free with basis $\{ \sum_{i = 1}^n x_i^k \partial_i \,:\, 0 \leq k < n \}$; see  \cite[Ex. 4.32]{OT}.

Sending $\partial_i \mapsto c_i$ for some fixed $c_1, \ldots, c_n \in S$ determines an $S$-module map $\ccc \colon \Der(S) \to S$. We call $\ccc_\AAA := \ccc(\Der(\AAA)) \subset S$ a \emph{Solomon--Terao ideal}. For example, when $\aaa: \partial_i \mapsto x_i$, the  Solomon--Terao ideal $\aaa_{\AAA_{\Phi^+}} = (\sum_{i=1}^n x_i^k : 1 \leq k \leq n)$ corresponding to $\AAA_{\Phi^+}$ is the classical coinvariant ideal. We will assume throughout that all $c_i$ are homogeneous of the same fixed degree $d \geq 0$.

\begin{defn}
\label{def:st-algebra}
Given $\AAA$ and $\ccc$ as above, the {\em Solomon--Terao algebra} is the quotient ring
\begin{equation}
\ST(\AAA;\ccc) := S / \ccc_\AAA.
\end{equation}
\end{defn}

The homogeneity assumption on $\ccc$ implies that $\ST(\AAA;\ccc)$ is a graded quotient of $S$.
Definition~\ref{def:st-algebra} is slightly more general than the Solomon--Terao algebras defined 
by Abe, Maeno, Murai, and Numata \cite{AMMN}; see Section~\ref{ST}.
When the arrangement $\AAA$ is free and $\ccc$ satisfies certain technical conditions, the Solomon--Terao algebras provide a vast 
array of quotients of $S$ cut out by complete intersections; see Lemma~\ref{lem:st-hilbert-series}.
Furthermore, short exact sequences involving derivation modules of certain triples of arrangements 
induce short exact sequences 
of the corresponding Solomon--Terao algebras (Lemma~\ref{lem:st-short-exact}).

We introduce a family of {\em southwest arrangements} $\AAA$ as subarrangements of $\AAA_{\widetilde{\Phi}^+}$.
The southwest arrangements are shown to be free (Proposition~\ref{prop:sw-free}). 
If we let $\ii: \Der(S) \rightarrow S$ be the $S$-module map induced by 
$\ii: \partial_i \mapsto 1$ for all $i$, we show (Proposition~\ref{prop:st-general})
 that for appropriate southwest arrangements $\AAA$ the Solomon--Terao algebra
$\ST(\AAA;\ii)$ coincides with the quotient $S/((S^{\symm_n}_+) : \widetilde{f}_J)$ where
$\widetilde{f}_J \in S$ has the same definition as $f_J$, but without the $x_j$ factors.
Southwest arrangements admit a recursive structure under deletion and restriction (Lemma~\ref{lem:sw-recursion}) which 
gives rise to short exact sequences leading to an inductive proof (Theorem~\ref{thm:ss-J}) that 
$\MMM(J)$ descends to a basis of $S/((S^{\symm_n}_+) : f_J)$ for all subsets  $J \subseteq [n]$,
thereby proving (Corollary~\ref{cor:ss}) the Sagan--Swanson Conjecture~\ref{conj:ss}.
We  use similar methods and a family of arrangements $\AAA_J$ distinct from southwest arrangements to
obtain (Corollary~\ref{cor:generating-set}) a simpler regular sequence which cuts out the ideal
$(S^{\symm_n}_+) : f_J$ appearing in Theorem~\ref{thm:rw}.

The methods in this paper have parallels in Hessenberg theory.
Let $G$ be a simple complex Lie group with Borel subgroup $B$ 
and maximal torus $T$.
Let $\g \supseteq \bb \supseteq \ttt$ be the corresponding Lie algebras.
If $\h \subseteq \g$ is a Hessenberg subspace,
we have a corresponding {\em regular nilpotent Hessenberg variety} $X_{\h}$ which is a closed subvariety of the flag variety $G/B$.

If $H^*(X_{\h})$ is the singular cohomology ring of $X_{\h}$ with real coefficients,
there is a surjective {\em Borel map} $\RR[\ttt] \twoheadrightarrow H^*(X_{\h})$ which realizes 
$H^*(X_{\h})$ as a quotient of the polynomial ring $\RR[\ttt]$.
The Hessenberg subspace $\h$ also corresponds to an ideal $\III$ in the positive root poset
$\Phi^+$ attached to $G$. 
Abe, Horiguchi, Masuda, Murai, and Sato proved \cite{AHMMS} (using different language) that 
$H^*(X_{\h})$ is the Solomon--Terao algebra associated to the ideal subarrangement $\AAA_\III$ of the Weyl arrangement
with respect to the map $\aaa: \partial_i \mapsto x_i$.
This uniform presentation of $H^*(X_{\h})$ allowed Abe et. al. to deduce numerous properties of these rings,
including the K\"ahler package (despite the failure of the $X_{\h}$ to be smooth).
Harada, Horiguchi, Murai, Precup, and Tymoczko \cite{HHMPT} showed that $H^*(X_{\h})$ admits an Artin-like basis in type A.

The Solomon--Terao algebras arise surprisingly in two seemingly unrelated contexts: Hessenberg varieties and superspace coinvariant theory.
In Section~\ref{Conclusion} we discuss the problem of finding other applications of these algebras.

The rest of the paper is organized as follows.
In {\bf Section~\ref{Background}} we give general background on commutative algebra and hyperplane arrangements.
In {\bf Section~\ref{ST}} we study general properties of Solomon--Terao algebras and prove a result
(Lemma~\ref{lem:derivation_colon}) which 
relates the ideals $\ccc_\BBB, \ccc_\AAA \subseteq S$ for certain subarrangements $\BBB \subseteq \AAA$. 
{\bf Section~\ref{Southwest}} introduces southwest arrangements, finds bases for their derivation modules, 
and derives a short exact sequence for the derivation modules of triples of arrangements arising in Terao's Addition-Deletion Theorem.
{\bf Section~\ref{AJ}} defines and studies the arrangements $\AAA_J$. We use the derivation modules of these arrangements to give a simpler regular sequence generating the ideals $(S^{\symm_n}_+) : f_J$.
In {\bf Section~\ref{Monomial}} we derive monomial bases for Solomon--Terao algebras corresponding to southwest arrangements.
{\bf Section~\ref{Artin}} applies the theory of southwest arrangements to prove the Sagan--Swanson Conjecture.
{\bf Section~\ref{Conclusion}} gives some open problems.

\section{Background}
\label{Background}

\subsection{Commutative algebra}\label{sec:commutative-algebra}
As explained in the introduction, we let $S = \KK[x_1, \dots, x_n]$ where $\KK$ is a field of characteristic 0. If $V = \KK^n$, we will sometimes identify $S = \KK[V]$
with the ring of regular maps $V \rightarrow \KK$.

For any $1 \leq i \leq n$ we have a partial derivative operator $\partial_i: S \rightarrow S$. These operators commute, so for any 
polynomial $f = f(x_1, \dots, x_n) \in S$ we have a differential operator $\partial f: S \rightarrow S$ given by
\begin{equation}
\partial f := f(\partial_1, \dots, \partial_n).
\end{equation}
This gives rise to an $S$-module structure $(-) \odot (-): S \times S \rightarrow S$ given by
\begin{equation}
f \odot g := \partial f(g).
\end{equation}

Let $A = \bigoplus_{d \geq 0} A_d$ be a graded $\KK$-vector space in which each graded piece $A_d$ is finite-dimensional.
The {\em Hilbert series} of $A$ is the formal power series
\begin{equation}
\Hilb(A;q) := \sum_{d \, \geq \, 0} \dim_{\KK} A_d \cdot q^d.
\end{equation}

Let $\aaa \subseteq S$ be an ideal and let $f \in S$.  As in the introduction, the {\em colon ideal} 
$\aaa : f$ is the ideal in $S$ given by $\{ g \in S \,:\, f g \in \aaa \}$.
It is not hard to see that $\aaa : f$ is an ideal containing $\aaa$.  Furthermore, we have an exact sequence
\begin{equation}
0 \longrightarrow S/(\aaa : f) \xrightarrow{ \, \, \times f \, \, } S/\aaa
\end{equation}
induced by multiplication by $f$. If $f, g \in S$ we have $\aaa : fg = (\aaa : f) : g$ and a corresponding exact sequence
\begin{equation}
0 \longrightarrow S/(\aaa : fg) \xrightarrow{ \, \, \times g \, \, } S/(\aaa:f).
\end{equation}

A sequence $f_1, \dots, f_r \in S$ of polynomials is called {\em regular} if 
$(f_1, \dots, f_r) \neq S$ and for each $i$, 
the map
\begin{equation}
S/(f_1, \dots, f_{i-1}) \xrightarrow{ \, \, \times f_i \, \, } S/(f_1, \dots, f_{i-1})
\end{equation}
is an injection. If each $f_i$ is homogeneous, we have $r \leq n$ and
\begin{equation}
\label{eq:2.7}
\Hilb(S/(f_1, \dots, f_r); q) = \frac{(1-q^{\deg f_1}) \cdots (1- q^{\deg f_r})}{(1-q)^n}.
\end{equation}
An ideal $\aaa \subseteq S$ is a {\em complete intersection} if $\aaa$ is generated by a regular sequence.
Let $\aaa \subseteq S$ be a homogeneous ideal. 
If $f_1, \dots, f_n \in S$ have positive degree, then
$f_1, \dots, f_n$ is a regular sequence if and only if $S/\aaa$ is a finite-dimensional $\KK$-vector space.

Let $A = \bigoplus_{d = 0}^m A_d$ be a finite-dimensional graded $\KK$-algebra with $A_m \neq 0$.
The algebra $A$ is a {\em Poincar\'e duality algebra (PDA)} if $A_m \cong \KK$ is 1-dimensional and if, for all $d$, the 
multiplication map $A_d \times A_{m-d} \rightarrow A_m \cong \KK$ given by $(a, b) \mapsto ab$ is a {\em perfect pairing}. (The map $(a, b) \mapsto ab$ is a perfect pairing if $(a,b) \mapsto 0$ for all $b \in A_{m - d}$ implies $a = 0$ \textit{and} $(a, b) \mapsto 0$ for all $a \in A_d$ implies $b = 0.$)  
The integer $m > 0$ is the {\em socle degree} of $A$.
If a homogeneous ideal $\aaa \subseteq S$ is a complete intersection generated by a regular sequence $f_1, \dots, f_n$ of length $n$,
 it is known that $S/\aaa$ is a PDA
of socle degree $\sum_{i = 1}^n (\deg f_i - 1)$; see \cite[Thm.\ 6.5.1]{Smith}.
The following lemma will be crucial.

\begin{lemma}
\label{lem:colon-ideal-generation}
{\em (Abe--Horiguchi--Masuda--Murai--Sato \cite[Lem.\ 2.4]{AHMMS})}
Suppose $\aaa, \aaa' \subseteq S$ are homogeneous ideals and $f \in S$ is a homogeneous polynomial of degree $k$ with 
$f \notin \aaa$. Suppose $\aaa' \subseteq (\aaa : f)$. If $S/\aaa'$ is a PDA of socle degree $m$ and $S/\aaa$ is a PDA of socle 
degree $m+k$ then $\aaa' = (\aaa : f)$.
\end{lemma}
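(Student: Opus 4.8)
The plan is to show that the natural graded surjection $\pi\colon S/\aaa' \twoheadrightarrow S/(\aaa:f)$ — which exists precisely because $\aaa' \subseteq (\aaa:f)$ — is injective, since this is exactly the assertion $\aaa' = (\aaa:f)$. Write $A := S/\aaa$ and $B := S/(\aaa:f)$. Since $f \notin \aaa$ we have $\bar f \neq 0$ in $A_k$ and $(\aaa:f) \neq S$, so $B$ is a nonzero graded quotient of $S$. The lemma will follow once we know that $B$ is itself a PDA of socle degree $m$, via a formal fact about surjections of PDAs.

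The crux is therefore to prove that $B$ is a PDA of socle degree $m$. Multiplication by $f$ gives a degree-raising-by-$k$ injection $B \hookrightarrow A$, $\bar g \mapsto \overline{fg}$, whose image is the principal ideal $\bar f \cdot A$; since $A$ is concentrated in degrees $0$ through $m+k$, the graded space $B$ is concentrated in degrees $0$ through $m$. Poincar\'e duality for $A$ applied to $0 \neq \bar f \in A_k$ yields $\bar h \in A_m$ with $\bar f \bar h \neq 0$ in $A_{m+k} \cong \KK$, so $\bar g \mapsto \bar f \bar g$ is an isomorphism $B_m \xrightarrow{\ \sim\ } A_{m+k} \cong \KK$. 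Transporting the multiplication pairing $B_d \times B_{m-d} \to B_m$ through this isomorphism turns it into $(\bar g, \bar h) \mapsto \bar f \bar g \bar h \in A_{m+k}$. If $\bar g \in B_d$ pairs to zero with all of $B_{m-d}$, then (since every element of $A_{m-d}$ represents a class in $B_{m-d}$ and $\bar f$ kills $\mathrm{ann}_A(\bar f)$) the element $\bar f \bar g \in A_{d+k}$ annihilates all of $A_{m-d}$ under the Poincar\'e pairing $A_{d+k} \times A_{m-d} \to A_{m+k}$ of $A$; as $A$ is a PDA this forces $\bar f \bar g = 0$, i.e.\ $\bar g = 0$ in $B$. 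So the pairing on $B$ is non-degenerate in the first variable, and by symmetry in the second, hence, comparing $\dim B_d$ with $\dim B_{m-d}$, it is perfect. Thus $B$ is a PDA of socle degree $m$.

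It remains to record the formal fact: a graded surjection $\pi\colon C \twoheadrightarrow D$ of PDAs with a common socle degree $m$ is an isomorphism. In top degree $\pi$ restricts to a surjection $C_m \twoheadrightarrow D_m$ of one-dimensional spaces, hence is injective there. If $0 \neq c \in C_d$, Poincar\'e duality for $C$ produces $c' \in C_{m-d}$ with $0 \neq cc' \in C_m$; then $\pi(c)\pi(c') = \pi(cc') \neq 0$, so $\pi(c) \neq 0$. Hence $\ker \pi = 0$. Applying this to $\pi\colon S/\aaa' \twoheadrightarrow B$ — both PDAs of socle degree $m$ — gives $\aaa' = (\aaa:f)$, as desired.

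I expect the middle paragraph to be the only real work: one must keep careful track of the grading — that $B$ lives in degrees $0$ through $m$, that $B_m$ is one-dimensional, and that the pairing induced on $B$ is literally the restriction of the Poincar\'e pairing on $A$ — whereas the initial reduction and the closing fact about surjections of PDAs are essentially formal.
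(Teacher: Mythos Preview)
The paper does not prove this lemma; it simply cites \cite[Lem.\ 2.4]{AHMMS} and remarks that the argument there, stated over $\RR$, goes through over any field. Your proof is correct and is essentially the standard one: identify $S/(\aaa:f)$ with the principal ideal $\bar f \cdot (S/\aaa)$ via multiplication by $f$, use Poincar\'e duality for $S/\aaa$ to show that this ideal is itself a PDA of socle degree $m$, and then invoke the elementary fact that a graded surjection of PDAs with equal socle degree is an isomorphism. The parenthetical ``since every element of $A_{m-d}$ represents a class in $B_{m-d}$'' is phrased a bit awkwardly --- the clean statement is simply that every class in $A_{m-d}$ is represented by some $h \in S_{m-d}$, and the hypothesis says $\overline{fg}\cdot \bar h = 0$ in $A_{m+k}$ for every such $h$ --- but the logic is sound.
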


In \cite{AHMMS} Lemma~\ref{lem:colon-ideal-generation} is stated over the field $\RR$ of real numbers, but its proof goes 
through over any field.

\subsection{Arrangement combinatorics}
Let $V$ be an $n$-dimensional $\KK$-vector space with dual space 
$V^* = \Hom_\KK(V,\KK)$.
Fixing a basis $\ee_1, \dots, \ee_n$ of $V$ yields a dual basis $x_1, \dots, x_n$ of $V^*$
and identifications $\KK[V] = \KK[x_1, \dots, x_n] = S$.

As in the introduction, a {\em (linear) hyperplane} in $V$ is a codimension 1 linear subspace $H \subset V$.
If $\alpha \in V^* - \{0\}$, we write $H_\alpha$ for the hyperplane given by the kernel of $\alpha: V \rightarrow \KK$.
Conversely, a hyperplane $H \subset V$ is the kernel of a unique $\alpha \in V^* - \{0\}$, up to a nonzero scalar. 

An {\em arrangement} in $V$ is a finite set $\AAA$ of hyperplanes in $V$. The arrangement $\AAA$ is {\em essential} if 
\begin{equation}
\bigcap_{H \, \in \, \AAA} H = \{ 0 \}.
\end{equation}
A {\em flat} of $\AAA$ is an intersection $X \subseteq V$ of hyperplanes in $\AAA$ 
(including the `empty intersection' $V$).
The {\em intersection poset} $\LL(\AAA)$ is the collection of flats in $\AAA$ ordered by reverse containment:
we have $X \leq_{\LL(\AAA)} Y$ if and only if $Y \subseteq X$.
Since we are considering linear arrangements only, the poset $\LL(\AAA)$ is a lattice with minimum 
$\hat{0} = V$ and maximum  $\hat{1} = \bigcap_{H \in \AAA} H$.
The arrangement $\AAA$ is {\em supersolvable} if $\LL(\AAA)$ admits a maximal chain of modular elements (see \cite{Stanley}).
In particular, supersolvability is a {\em combinatorial property} of $\AAA$: it only depends on the intersection lattice $\LL(\AAA)$.

Let $P$ be a finite poset.  The {\em M\"obius function} $\mu_P: P \times P \rightarrow \ZZ$ is defined recursively by
\begin{equation}
\begin{cases}
\mu_P(x,x) = 1 & \text{for all $x \in P$,} \\
\mu_P(x,y) = 0 & \text{if $x \not\leq_P y$,}
\end{cases} \quad \text{and} \quad
\sum_{x  \, \leq_P \, z  \, \leq_P \, y} \mu_P(x,z) = 0
\text{ for all $x <_P y$}.
\end{equation}
If $\AAA$ is an arrangement, the {\em characteristic polynomial} of $\AAA$ is given by
\begin{equation}
\chi_\AAA(t) := \sum_{X \, \in \, \LL(\AAA)} \mu_{\LL(\AAA)}(\hat{0},X) \cdot t^{\dim X}.
\end{equation}

If $\AAA$ is an arrangement in $V$ and $H \in \AAA$ is a hyperplane, we define two new arrangements as follows.
The {\em deletion} $\AAA \setminus H$ is the subarrangement obtained from $\AAA$ by removing the hyperplane $H$.
The {\em restriction} $\AAA^H$ is the arrangement in $H$ given by
\begin{equation}
\AAA^H = \{ H' \cap H \,:\, H' \in \AAA, \, H' \neq H \}.
\end{equation}

\subsection{Derivation modules}
A {\em derivation} of $S$ is a $\KK$-linear map $\theta: S \rightarrow S$ which satisfies the Leibniz rule:
$\theta(fg) = f \theta(g) + g \theta(f)$ for all $f, g \in S$.
The set $\Der(S)$ of derivations $\theta: S \rightarrow S$ has the structure of an $S$-module via $(f \theta)(g) = f \cdot \theta(g)$ for $f, g \in S$.
For each $1 \leq i \leq n$, the partial derivative operator $\partial_i: S \rightarrow S$ is a derivation, and the set 
$\{ \partial_1, \dots, \partial_n \}$ forms a free $S$-basis for $\Der(S)$.
A derivation $\theta \in S$ is {\em homogeneous of degree $d$} if $\theta = f_1 \partial_1 + \cdots + f_n \partial_n$ where each 
$f_1, \dots, f_n \in S$ is homogeneous of degree $d$.

Let $\AAA$ be a hyperplane arrangement in $V = \KK^n$ with hyperplanes $H_1, \dots, H_m$ given by the kernels of the linear forms 
$\alpha_1, \dots, \alpha_m \in V^*$.
The {\em defining polynomial} of $\AAA$ is the product $Q(\AAA) := \prod_{i = 1}^m \alpha_i \in S$.  The polynomial $Q(\AAA)$ is defined up to
a nonzero scalar.
A derivation $\theta \in \Der(S)$ of $S$ is called a {\em derivation of $\AAA$} if $\theta(Q(\AAA))$ is divisible by $Q(\AAA)$.
The {\em derivation module} of $\AAA$ is the set $\Der(\AAA)$ of all derivations of $\AAA$; this is an $S$-submodule of $\Der(S)$.
For any arrangement $\AAA$ we have
\begin{equation}
\Der(\AAA) = \bigcap_{H \, \in \, \AAA} \Der(H).
\end{equation}
In particular, if $\BBB \subseteq \AAA$ is a subarrangement of $\AAA$ then $\Der(\AAA) \subseteq \Der(\BBB)$.
The arrangement $\AAA$ is {\em free} if $\Der(\AAA)$ is a free $S$-module.

If $\AAA$ is a free arrangement, the module $\Der(\AAA)$ will admit a homogeneous $S$-basis $\{ \rho_1, \dots, \rho_n \}$ where the multiset of 
degrees $e_i := \deg \rho_i$ is uniquely determined.  
The degrees $e_1, \dots, e_n$ are called the {\em exponents} of $\AAA$.
Terao's Factorization Theorem \cite{Terao} asserts that the characteristic polynomial of a free arrangement factors over $\ZZ$ as
\begin{equation}\label{eq:Terao's-Factorization-Thm}
\chi_\AAA(t) = \prod_{i \, = \, 1}^n (t - e_i).
\end{equation}

Showing that the characteristic polynomial of an arrangement does not factor over $\ZZ$ is often a practical way to show 
that an arrangement is not free, but there exist nonfree arrangements $\AAA$ for which $\chi_\AAA(t)$ factors over $\ZZ$.
On the other hand, it is known that every supersolvable arrangement is free. However, there are free arrangements which are not supersolvable.

\section{Solomon--Terao algebras}
\label{ST}

An $S$-module map $\ccc: \Der(S) \rightarrow S$ is determined by its images
$\ccc(\partial_i) \in S$ on the basis elements $\partial_i$ of $\Der(S)$ for $1 \leq i \leq n$.
We say that $\ccc$ is {\em homogeneous of degree $d$} if each $\ccc(\partial_i) \in S$ is homogeneous of degree $d$.
As in the introduction, if $\AAA$ is an arrangement in $\KK^n$ we write $\ccc_\AAA := \ccc(\Der(\AAA))$ for the image of $\Der(\AAA) \subseteq \Der(S)$
under $\ccc$.
If the map $\ccc$ is homogeneous, then $\ccc_\AAA \subseteq S$ is a homogeneous ideal.
We only consider homogeneous maps $\ccc$.
As defined in the introduction,
the Solomon--Terao algebra is $\ST(\AAA,\ccc) := S/\ccc_\AAA$.

Solomon--Terao algebras have arisen `naturally' in the context of Hessenberg theory \cite{AHMMS} with the $S$-module
map $\aaa: \Der(S) \rightarrow S$ given by $\aaa: \partial_i \mapsto x_i$.
For our applications to superspace coinvariant theory
we will consider Solomon--Terao algebras with respect to the map $\ii: \Der(S) \rightarrow S$ defined by $\ii: \partial_i \mapsto 1$ for all $i$.
In this section we establish some simple facts about Solomon--Terao algebras in general.

\begin{remark}
\label{rmk:ammn}
In earlier work,
Abe, Maeno, Murai, and Numata  defined \cite{AMMN} a class of quotients of $S$ called ``Solomon--Terao algebras."
Their definition depends on a hyperplane arrangement $\AAA$ in $\KK^n$ and a homogeneous polynomial $h \in S$.
Abe et. al. define the Solomon--Terao algebra to be the quotient of $S$ by the ideal $\{ \theta(h) \,:\, \theta \in \Der(\AAA) \}$.
This is equivalent to considering the $S$-module map $\ccc: \Der(S) \rightarrow S$ given by $\partial_i \mapsto \partial_i(h)$ for all $i$.
Our algebras $\ST(\AAA,\ccc)$ are therefore slightly more general than those in \cite{AMMN}.
However, both examples  $\ST(\AAA,\aaa)$ and $\ST(\AAA,\ii)$ of Solomon--Terao algebras which have arisen in applications
arise from the construction of Abe et.\ al., using the polynomials $h = \frac{1}{2}(x_1^2 + \cdots + x_n^2)$ and $h = x_1 + \cdots + x_n$.
\end{remark}

Solomon--Terao algebras behave well in the context of free arrangements.
To begin, we record a well-known lemma relating the exponents of a free arrangement
to the number of hyperplanes.

\begin{lemma}
\label{lem:free-arrangement-exponents}
Let $\AAA$ be a free arrangement with exponents $e_1, \dots, e_n$.
Then $\AAA$ contains $e_1 + \cdots + e_n$ hyperplanes.
\end{lemma}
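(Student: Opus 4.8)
The plan is to deduce the statement from Terao's Factorization Theorem~\eqref{eq:Terao's-Factorization-Thm} by comparing the two leading coefficients of the characteristic polynomial $\chi_\AAA(t)$.

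First I would read off these coefficients directly from the definition $\chi_\AAA(t) = \sum_{X \in \LL(\AAA)} \mu_{\LL(\AAA)}(\hat{0},X)\, t^{\dim X}$. The unique flat of dimension $n$ is $\hat{0} = V$, contributing the monomial $t^n$ with coefficient $\mu_{\LL(\AAA)}(\hat{0},\hat{0}) = 1$. The flats of dimension $n-1$ are precisely the hyperplanes $H \in \AAA$: any codimension-one flat is an intersection of hyperplanes of $\AAA$ which is itself a hyperplane, hence equals one of the $H \in \AAA$, and distinct hyperplanes give distinct flats. For each such $H$, the interval $[\hat{0}, H]$ in $\LL(\AAA)$ has exactly two elements, so the Möbius recursion $\sum_{\hat{0} \leq Z \leq H} \mu_{\LL(\AAA)}(\hat{0},Z) = 0$ forces $\mu_{\LL(\AAA)}(\hat{0},H) = -1$. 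Therefore the coefficient of $t^{n-1}$ in $\chi_\AAA(t)$ equals $-|\AAA|$.

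On the other hand, expanding $\prod_{i=1}^n (t - e_i)$ shows that the coefficient of $t^{n-1}$ in this product is $-(e_1 + \cdots + e_n)$. Since $\AAA$ is free, Terao's Factorization Theorem gives $\chi_\AAA(t) = \prod_{i=1}^n (t - e_i)$, and equating the coefficients of $t^{n-1}$ yields $|\AAA| = e_1 + \cdots + e_n$, which is the claim.

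There is no real obstacle here; the only point deserving a moment's care is the identification of the dimension-$(n-1)$ flats with the hyperplanes of $\AAA$ together with the computation of their Möbius values. As an alternative route that avoids the characteristic polynomial, one could instead invoke Saito's criterion: given a homogeneous basis $\rho_1, \dots, \rho_n$ of $\Der(\AAA)$ with $\deg \rho_i = e_i$, the determinant $\det(\rho_i(x_j))_{1 \le i,j \le n}$ is a homogeneous polynomial of degree $e_1 + \cdots + e_n$ which is a nonzero scalar multiple of the defining polynomial $Q(\AAA)$ of degree $|\AAA|$, and comparing degrees gives the same conclusion.
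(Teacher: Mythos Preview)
Your proof is correct and follows essentially the same approach as the paper: both invoke Terao's Factorization Theorem and compare the coefficient of $t^{n-1}$ on each side. You simply make explicit the computation of that coefficient from the definition of $\chi_\AAA(t)$ (and note the alternative via Saito's criterion), whereas the paper leaves this step to the reader.
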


\begin{proof}
By (\ref{eq:Terao's-Factorization-Thm}), the characteristic polynomial of $\AAA$ factors as 
\begin{equation}
\chi_\AAA(t) = \sum_{X \, \in \, \LL(\AAA)} \mu_{\LL(\AAA)}(\hat{0},X) \cdot t^{\dim(X)} =  \prod_{i \, = \, 1}^n (t - e_i).
\end{equation}
Comparing the coefficient of $t^{n-1}$ on both sides yields the result.
\end{proof}

We next show that,
if $\AAA$ is free and the Solomon--Terao algebra $\ST(\AAA,\ccc)$ has finite positive dimension over $\KK$, then $\ST(\AAA,\ccc)$ is a Poincar\'e duality algebra and its Hilbert series can be described by exponents.

\begin{lemma}
\label{lem:st-hilbert-series}
Let $\ccc: \Der(S) \rightarrow S$ be an $S$-module map of homogeneous degree $d \geq 0$ and let $\AAA$ be a free
arrangement in $\KK^n$ with exponents $e_1, \dots, e_n$.
Exactly one of the following items about the Solomon--Terao algebra $\ST(\AAA,\ccc)$ is true.
\begin{enumerate}
\item  We have $\ST(\AAA,\ccc) = 0$.
\item  The algebra $\ST(\AAA,\ccc)$ has infinite dimension over $\KK$.
\item The ideal $\ccc_\AAA \subseteq S$ is a complete intersection and the algebra $\ST(\AAA,\ccc)$ is a Poincar\'e duality algebra
with Hilbert series
$$
\Hilb(\ST(\AAA,\ccc);q) = \frac{(1-q^{e_1 + d}) \cdots (1 - q^{e_n + d})}{(1-q)^n}
$$
and socle degree $e_1 + \cdots + e_n + n(d-1) = |\AAA| + n(d-1)$.
\end{enumerate}
\end{lemma}

\begin{proof}
Let $\rho_1, \dots, \rho_n$ be a free basis for $\Der(\AAA)$ with $\deg \rho_i = e_i$.
The ideal $\ccc_\AAA \subseteq S = \KK[x_1, \dots, x_n]$ is generated by 
the $n$ elements $\ccc(\rho_1), \dots, \ccc(\rho_n) \in S$.
If (1) and (2) fail to hold, then $\left(\ccc(\rho_1), \dots, \ccc(\rho_n)\right) \neq S$ and $S / \ccc_\AAA$ is Artitinan. It follows from the discussion in Section ~\ref{sec:commutative-algebra} that $\ccc_\AAA$ is a complete intersection so $S / \ccc_{\AAA} = \ST(\AAA, \ccc)$ is a Poincar\'e Duality algebra.
The claimed Hilbert series follows from \eqref{eq:2.7} because $\deg \ccc(\rho_i) = e_i + d$.
The final equality in (3) is a consequence of Lemma~\ref{lem:free-arrangement-exponents}.
\end{proof}

Let us show that conditions (1) and (2) in Lemma~\ref{lem:st-hilbert-series} can occur.
Consider the map $\ii: \Der(S) \rightarrow S$ given by $\ii: \partial_i \mapsto 1$ for all $i$.
The Solomon--Terao algebra of the empty arrangement $\varnothing$ in $\KK^n$ is
$$
\ST(\varnothing,\ii) = S / \ii_\varnothing = S / (1) = 0.
$$
On the other hand, if $n = 2$ and $\AAA = \{x_1 + x_2 = 0\}$, it is not hard to see that 
$\AAA$ is a free arrangement and the derivation module of $\AAA$ has $S$-basis
$\{ \partial_1 - \partial_2, x_2 \partial_1 - x_1 \partial_2 \}$.
Since $\ii(\partial_1 - \partial_2) = 1 - 1 = 0$ and
$\ii(x_2 \partial_1 - x_1 \partial_2) = x_2 - x_1$, we see that
$$
\ST(\AAA,\ii) = \KK[x_1, x_2] / \ii_\AAA = \KK[x_1, x_2] / (x_2 - x_1)
$$
is an infinite-dimensional $\KK$-vector space.

There are conditions on $\ccc$ which guarantee that (1) and (2) in Lemma~\ref{lem:st-hilbert-series} never occur.
If $d > 0$, the ideal $\ccc_\AAA \subseteq S$ lives in strictly positive degrees so that (1) is impossible.
Degree reasons also forbid (1) when $e_1, \dots, e_n > 1$.
When $\ccc$ is the map $\aaa: \partial_i \mapsto x_i$ and $\KK = \RR$, (1) cannot occur for degree reasons and (2) cannot occur by work of Solomon--Terao \cite[Prop.\ 4.10, $h = \frac{1}{2}(x_1^2 + \cdots + x_n^2)$]{ST}
\footnote{In order to apply \cite[Prop.\ 4.10]{ST} one uses that $\frac{1}{2}(x_1^2 + \cdots + x_n^2)$ is a positive definite quadratic form on $\RR^n$. See \cite[Defn.\ 1.6]{AMMN} and \cite[Prop.\ 2.8 (1), $i = 0$, $\eta = \frac{1}{2}(x_1^2 + \cdots + x_n^2)$]{AMMN}  for a more direct statement using the language of Solomon--Terao algebras.}.
Since we are considering the degree 0 map $\ii: \partial_i \mapsto 1$, we will need to be mindful
of the full trichotomy in Lemma~\ref{lem:st-hilbert-series}.

Let $\AAA$ be a free arrangement and let $\BBB \subseteq \AAA$ be a subarrangement which is also free.
Since $\Der(\AAA) \subseteq \Der(\BBB)$, the Solomon--Terao algebra $\ST(\AAA,\ccc)$ projects onto $\ST(\BBB,\ccc)$.
Under mild hypotheses, we can be more precise about the nature of this projection as follows.
Recall that $Q(\AAA), Q(\BBB) \in S$ are the defining polynomials of $\AAA$ and $\BBB$. Since $\BBB \subseteq \AAA$,
we have $Q(\BBB) \mid Q(\AAA)$.

\begin{lemma}
    \label{lem:derivation_colon}
    Let $\AAA$ be a free arrangement in $\KK^n$
    and let $\BBB \subseteq \AAA$ be a free 
    subarrangement. 
    Let $\ccc: \Der(S) \rightarrow S$ be a homogeneous map.
    Assume the following.
    \begin{enumerate}
        \item The big Solomon--Terao algebra 
        $\ST(\AAA,\ccc)$ is a finite-dimensional $\KK$-vector space.
        \item The quotient
        $Q(\AAA)/Q(\BBB)$ does not
        lie in $\ccc_\AAA$.
    \end{enumerate}
    Then $\ccc_\BBB = \ccc_\AAA : (Q(\AAA)/Q(\BBB))$
    as ideals in $S$.
\end{lemma}
\begin{proof}
It follows from the definitions that multiplication by $Q(\AAA)/Q(\BBB)$ carries $\Der(\BBB)$ into $\Der(\AAA)$.
Applying the map $\ccc$ shows that 
$\ccc_\BBB \subseteq \ccc_\AAA : (Q(\AAA)/Q(\BBB))$ as ideals in $S$.
This also shows $\ST(\AAA,\ccc)$ and $\ST(\BBB,\ccc)$ are nonzero since  assumption (2) tells that $\ccc_\AAA : (Q(\AAA)/Q(\BBB))$ is not a unit ideal.
Assumption (1), the fact that $\ccc_\AAA \subseteq \ccc_\BBB$, and Lemma~\ref{lem:st-hilbert-series}
imply that $\ST(\AAA,\ccc)$ and $\ST(\BBB,\ccc)$ are Poincar\'e duality algebras of socle degrees $|\AAA| + n(d-1)$
and $|\BBB| + n(d-1)$, respectively, where $d$ is the degree of $\ccc$.  
Since 
\begin{equation}
\deg(Q(\AAA)/Q(\BBB)) = \deg Q(\AAA) - \deg Q(\BBB) = |\AAA| - |\BBB| = ( |\AAA| + n(d-1)) - (|\BBB| + n(d-1)),
\end{equation}
assumption (2) gives us license to apply Lemma~\ref{lem:colon-ideal-generation} to prove the result.
\end{proof}

Lemma~\ref{lem:derivation_colon} realizes certain Solomon--Terao ideals as colon ideals.
We will show that a family of ideals closely related to the colon ideals $(S^{\symm_n}_+) : f_J$ 
appearing in Theorem~\ref{thm:rw} fits into the framework of Lemma~\ref{lem:derivation_colon}.

\section{Southwest arrangements}
\label{Southwest}

\subsection{Southwest combinatorics}
Theorem~\ref{thm:rw} gives a transfer principle between the superspace coinvariant ring $SR = \Omega/I$ and quotients 
of the classical polynomial ring $S$ by colon ideals ${(S^{\symm_n}_+) : f_J}$ where each $f_J$ is a product of $(x_i - x_j)$'s
and $x_i$'s.  
We want to put these latter commutative quotients in a context amenable to Lemma~\ref{lem:derivation_colon}.
For this, it is natural to consider hyperplanes given by the kernels of linear forms in $\widetilde{\Phi}^+$.
This motivates the following definition.

For $1 \leq i < j \leq n$, write $H_{i,j} \subseteq \KK^n$ for the hyperplane $x_i = x_j$.
For $1 \leq j \leq n$, write 
\[H_{0,j} = H_j = H_{x_j} \] 
 for the coordinate hyperplane $x_j = 0$.
Let $\AAA_{\widetilde{\Phi}^+} = \{ H_{i,j} \,:\, 1 \leq i < j \leq n \} \cup \{ H_j \,:\, 1 \leq j \leq n \}$.
Our notational conventions yield
$\AAA_{\widetilde{\Phi}^+} = \{ H_{i,j} \,:\, 0 \leq i < j \leq n \}$.

\begin{defn}
\label{defn:sw-arrangement}
A subarrangement $\AAA \subseteq \AAA_{\widetilde{\Phi}^+}$ is a {\em southwest arrangement}
if it satisfies the following condition:
\begin{center}
$H_{i,j} \in \AAA$ and $j > i+1$ imply $H_{i,j-1} \in \AAA$.
\end{center}
\end{defn}

The ``southwest" terminology is motivated by the following picture. We give the linear forms in $\widetilde{\Phi}^+$ 
the partial order $(x_a - x_d) \geq (x_b - x_c)$ if $0 \leq a \leq b < c \leq d \leq n$, where we identify $x_d \leftrightarrow x_0 - x_d$.
This gives rise to an augmented version of the type A positive root poset, shown here in the case $n = 5$.

\vspace{0.1in}

\begin{center}
\begin{tikzpicture}[scale = 0.9]

\draw (0,0) circle (3pt);
\node at (0.05,-0.4) {$x_1$};

\draw (2,0) circle (3pt);
\node at (2,-0.4) {$x_1 - x_2$};

\draw (4,0) circle (3pt);
\node at (4,-0.4) {$x_2 - x_3$};

\draw (6,0) circle (3pt);
\node at (6,-0.4) {$x_3 - x_4$};

\draw (8,0) circle (3pt);
\node at (8,-0.4) {$x_4 - x_5$};

\draw (1,1) circle (3pt);
\node at (1.05,0.6) {$x_2$};

\draw (3,1) circle (3pt);
\node at (3,0.6) {$x_1 - x_3$};

\draw (5,1) circle (3pt);
\node at (5,0.6) {$x_2 - x_4$};

\draw (7,1) circle (3pt);
\node at (7,0.6) {$x_3 - x_5$};

\draw (2,2) circle (3pt);
\node at (2.05,1.6) {$x_3$};

\draw (4,2) circle (3pt);
\node at (4,1.6) {$x_1 - x_4$};

\draw (6,2) circle (3pt);
\node at (6,1.6) {$x_2 - x_5$};

\draw (3,3) circle (3pt);
\node at (3.05,2.6) {$x_4$};

\draw (5,3) circle (3pt);
\node at (5,2.6) {$x_1 - x_5$};

\draw (4,4) circle (3pt);
\node at (4.05,3.6) {$x_5$};

\end{tikzpicture}
\end{center}

A subarrangement of $\AAA_{\widetilde{\Phi}^+}$ is southwest if its hyperplanes are closed under moving southwest in this poset.
This is a weaker condition than being an order ideal of $\widetilde \Phi^+$.
When $n = 5$, the southwest arrangement
\begin{equation}\label{eqn:ex-hyperplane}
   \AAA = \{ H_1, H_2, H_{1,2}, H_{1,3}, H_{2,3}, H_{1,4}, H_{2,4}, H_{3,4}, H_{2,5} \} 
\end{equation}
may be illustrated as follows, where hyperplanes correspond to black dots.

\vspace{0.1in}

\begin{center}
\begin{tikzpicture}[scale = 0.9]

\filldraw (0,0) circle (3pt);
\node at (0.05,-0.4) {$x_1$};

\filldraw (2,0) circle (3pt);
\node at (2,-0.4) {$x_1 - x_2$};

\filldraw (4,0) circle (3pt);
\node at (4,-0.4) {$x_2 - x_3$};

\filldraw (6,0) circle (3pt);
\node at (6,-0.4) {$x_3 - x_4$};

\draw (8,0) circle (3pt);
\node at (8,-0.4) {$x_4 - x_5$};

\filldraw (1,1) circle (3pt);
\node at (1.05,0.6) {$x_2$};

\filldraw (3,1) circle (3pt);
\node at (3,0.6) {$x_1 - x_3$};

\filldraw (5,1) circle (3pt);
\node at (5,0.6) {$x_2 - x_4$};

\draw (7,1) circle (3pt);
\node at (7,0.6) {$x_3 - x_5$};

\draw (2,2) circle (3pt);
\node at (2.05,1.6) {$x_3$};

\filldraw (4,2) circle (3pt);
\node at (4,1.6) {$x_1 - x_4$};

\filldraw (6,2) circle (3pt);
\node at (6,1.6) {$x_2 - x_5$};

\draw (3,3) circle (3pt);
\node at (3.05,2.6) {$x_4$};

\draw (5,3) circle (3pt);
\node at (5,2.6) {$x_1 - x_5$};

\draw (4,4) circle (3pt);
\node at (4.05,3.6) {$x_5$};

\end{tikzpicture}
\end{center}

For any southwest arrangement $\AAA$, we associate an {\em $h$-sequence}
$h(\AAA) = (h_1, \dots, h_n)$ by 
\begin{equation}
h_i = h_i(\AAA) := | \AAA \cap \{ H_i, H_{1,i}, H_{2,i}, \dots, H_{i-1,i} \} |.
\end{equation}
In terms of the poset structure on $\widetilde{\Phi}^+$, the number $h_i$ counts black dots in the $i^{th}$ northwest-to-southeast diagonal.
The above example has $h(\AAA) = (h_1, \dots, h_5) = (1,2,2,3,1)$.
 It will turn out that $(h_1, \dots, h_n)$ are the exponents of a southwest arrangement $\AAA$. 
 
The class of southwest arrangements generalize a family of arrangements which arise in type A Hessenberg theory 
\cite{AHHM, AHMMS} corresponding to order ideals $\III \subseteq \Phi^+$ of the type A positive root poset.
Such ideal arrangements are in one-to-one correspondence with {\em Hessenberg functions} 
$h: [n] \rightarrow [n]$ which satisfy $h(1) \leq \cdots \leq h(n)$ and $h(i) \geq i$ for all $i$;
this motivates our use of $h$.\footnote{One also needs to replace $h(i) \mapsto h(i) - i$.}
Unlike in the Hessenberg case, there can be multiple southwest arrangements $\AAA$ with the same $h$-sequence.

Southwest arrangements and their $h$-functions behave nicely under deletion of and restriction to certain coordinate hyperplanes $H_{p}$.
We define southwest arrangements in $H_{x_p}$ in the natural way using the ordered basis $\ee_1, \dots, \ee_{p-1}, \ee_{p+1}, \dots, \ee_n$
of $H_{p}$. We also use the abbreviation
\begin{equation}
\AAA^{(p)} := \AAA^{H_{p}}
\end{equation}
for the restriction of an arrangement $\AAA$ to the $p^{th}$ coordinate hyperplane $H_p$.

\begin{lemma}
\label{lem:sw-recursion}
Let $\AAA$ be a southwest arrangement in $\KK^n$ with 
$h(\AAA) = (h_1, \dots, h_n)$. Assume that $\AAA$ contains at least one coordinate hyperplane
and write  $p := \max \{ k \,:\, H_{k} \in \AAA \}$.
Then
\begin{enumerate}
\item $\AAA \setminus H_{p}$ is a southwest arrangement with 
\begin{equation*}
h(\AAA \setminus H_{p}) = (h_1, \dots ,h_{p-1}, h_p - 1, h_{p+1}, \dots, h_n).
\end{equation*}
\item $\AAA^{(p)}$ is a southwest arrangement in $H_{p}$ with
\begin{equation*}
h(\AAA^{(p)}) = (h_1, \dots, h_{p-1}, h_{p+1}, \dots, h_n).
\end{equation*}
\end{enumerate}
\end{lemma}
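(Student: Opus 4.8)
The plan is to verify both statements directly from the combinatorial definitions, treating the two claims in turn and using the maximality of $p$ as the coordinate-hyperplane index.

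\textbf{Claim (1): deletion.} First I would check that $\AAA \setminus H_p$ is again a southwest arrangement. This is immediate: the southwest condition is a constraint of the form ``$H_{i,j} \in \AAA$ and $j > i+1$ imply $H_{i,j-1} \in \AAA$'', which only ever \emph{requires} the presence of non-coordinate hyperplanes $H_{i,j-1}$ with $1 \le i < j-1$ (so $i \ge 1$). Deleting a coordinate hyperplane $H_p = H_{0,p}$ cannot destroy any such required membership, since the hypothesis side of the implication that could be affected is ``$H_{0,p} \in \AAA$ with $p > 1$'', whose conclusion would be $H_{0,p-1} = H_{p-1} \in \AAA$ — but by maximality of $p$ this conclusion is vacuously irrelevant because we are not adding $H_{p-1}$, we are removing $H_p$; more carefully, every implication whose hypothesis survives in $\AAA \setminus H_p$ also held in $\AAA$ and its conclusion (a non-coordinate hyperplane) survives deletion. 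Then for the $h$-sequence: $h_i(\AAA)$ counts $|\AAA \cap \{H_i, H_{1,i}, \dots, H_{i-1,i}\}|$, i.e. the hyperplanes on the $i$-th diagonal. Deleting $H_p$ removes exactly one element from the $p$-th diagonal and touches no other diagonal, giving $h(\AAA \setminus H_p) = (h_1, \dots, h_{p-1}, h_p - 1, h_{p+1}, \dots, h_n)$.

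\textbf{Claim (2): restriction.} The restriction $\AAA^{(p)} = \AAA^{H_p}$ lives in $H_p = \{x_p = 0\}$, which I identify with $\KK^{n-1}$ using the ordered basis $\ee_1, \dots, \ee_{p-1}, \ee_{p+1}, \dots, \ee_n$; write the corresponding coordinates as $y_1, \dots, y_{n-1}$ with $y_k \leftrightarrow x_k$ for $k < p$ and $y_k \leftrightarrow x_{k+1}$ for $k \ge p$. The key computation is to identify $H' \cap H_p$ for each $H' \in \AAA$, $H' \ne H_p$. There are several cases: (a) $H' = H_{i,j}$ with $i,j \ne p$ restricts to the analogous hyperplane $\{x_i = x_j\}$ in the new coordinates; (b) $H' = H_q$ with $q \ne p$ restricts to $\{x_q = 0\}$, a coordinate hyperplane in the new coordinates; (c) $H' = H_{i,p}$ with $i < p$ restricts to $\{x_i = 0\}$ inside $H_p$, a \emph{coordinate} hyperplane; (d) $H' = H_{p,j}$ with $p < j$ restricts to $\{x_j = 0\}$, again a coordinate hyperplane. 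So $\AAA^{(p)}$ is a subarrangement of $\AAA_{\widetilde\Phi^+}$ in $\KK^{n-1}$. The main thing to watch is collisions: distinct $H' \in \AAA$ can restrict to the same hyperplane of $H_p$ (e.g. $H_{i,p}$, $H_{p,j}$ and $H_i$, $H_j$ can all give coordinate hyperplanes). I would argue that such collisions do not change the \emph{set} $\AAA^{(p)}$ and that, after removing coordinate $p$ from the index set and reindexing, the resulting arrangement still satisfies the southwest closure condition, then count diagonals.

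\textbf{The $h$-sequence for the restriction, and the main obstacle.} The $h$-sequence claim is $h(\AAA^{(p)}) = (h_1, \dots, h_{p-1}, h_{p+1}, \dots, h_n)$ — i.e. simply delete the $p$-th entry. To see this I would track, diagonal by diagonal in the reindexed poset on $\widetilde\Phi^+$ for $\KK^{n-1}$, which hyperplanes of $\AAA$ contribute. The content of the claim is that the $i$-th diagonal of $\AAA^{(p)}$ ``is'' the $i$-th (for $i < p$) or $(i+1)$-st (for $i \ge p$) diagonal of $\AAA$, with the $p$-th diagonal of $\AAA$ being absorbed/redistributed without net change to the surviving diagonal counts. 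I expect the bookkeeping here to be the main obstacle: one must show that the hyperplanes $H_{i,p}$ ($i < p$) and $H_{p,j}$ ($p < j$), which restrict to coordinate hyperplanes $\{x_i=0\}$ and $\{x_j = 0\}$, land on exactly the right diagonals, and that whenever such a restriction coincides with the restriction of an already-present hyperplane (so the count ``could'' drop), the southwest condition forces the configuration so that the total on each surviving diagonal is unchanged. Concretely, the southwest condition applied to $H_{p,j} \in \AAA$ (with $j > p+1$, say) forces $H_{p,j-1}, \dots, H_{p,p+1} \in \AAA$, and these together with $H_p$ itself (present since $p = \max\{k : H_k \in \AAA\}$ guarantees $H_p \in \AAA$) interlock so that deleting the $p$-th diagonal and merging is exactly an $h$-entry deletion. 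I would organize this as a careful case analysis on the position of a hyperplane relative to column/row $p$, using Lemma-style diagonal counting, and double-check on the running $n=5$ example $\AAA$ of \eqref{eqn:ex-hyperplane} with $p=2$, where the claim predicts $h(\AAA^{(2)}) = (1,2,3,1)$.
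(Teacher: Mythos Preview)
Your overall strategy matches the paper's: both proofs are direct combinatorial verifications from the definitions, with the paper phrasing Item (2) pictorially (the hyperplanes through index $p$ form a ``V'' in the poset; restriction erases the northwest-to-southeast leg and slides the southwest-to-northeast leg up to the tops of its diagonals) and you planning the equivalent case analysis. However, there are two genuine slips you should fix.

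\textbf{Item (1).} Your claim that the southwest condition ``only ever requires the presence of non-coordinate hyperplanes $H_{i,j-1}$ with $i \ge 1$'' is false: the condition is stated for all $0 \le i < j$, so taking $i = 0$ it forces $H_{0,j} \in \AAA \Rightarrow H_{0,j-1} \in \AAA$, i.e.\ $H_j \in \AAA \Rightarrow H_{j-1} \in \AAA$. In particular you cannot delete an arbitrary coordinate hyperplane and stay southwest (e.g.\ $\{H_1,H_2\} \setminus H_1 = \{H_2\}$ is not southwest). The correct argument is the one you almost reach in your ``more carefully'' clause, but stated properly: the only southwest implication with \emph{conclusion} $H_p = H_{0,p}$ has hypothesis $H_{0,p+1} = H_{p+1}$, and $H_{p+1} \notin \AAA$ by maximality of $p$. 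So that implication is vacuous after deletion, and every other implication is unaffected.

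\textbf{Item (2).} Your collision bookkeeping is missing one ingredient that the paper uses implicitly: since $H_p \in \AAA$, the southwest condition (with $i=0$) forces $H_1, H_2, \dots, H_{p-1} \in \AAA$ as well. This is exactly what makes the diagonal counts for $i < p$ come out right. For $i < p$, the hyperplane $H_{i,p}$ (sitting on the deleted $p$-th diagonal) restricts to $H_{x_i}$, and you need to know this collides with an \emph{already-present} $H_{x_i}$ so that diagonal $i$ neither gains nor loses a dot. Dually, for $j > p$ you need $H_{x_j} \notin \AAA$ (true by maximality of $p$) so that $H_{p,j} \mapsto H_{x_j}$ replaces the lost $H_{p,j}$ on diagonal $j$ without a collision; you have this half. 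With both observations in hand, your case analysis goes through and reproduces the paper's ``erase one leg of the V, slide the other leg to the top'' picture.
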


\begin{proof}
 (1) is clear from the definitions. For  (2), restriction to $H_{x_p}$ has the effect of setting $x_p \rightarrow 0$ in the linear forms
defining the hyperplanes of $\AAA$. This leaves linear forms $x_i - x_j$ unchanged if $i, j \neq p$ while carrying $x_i - x_p$ to $x_i$.
In the poset picture of $\widetilde{\Phi}^+$, the hyperplanes involving $x_p$ form a V-shaped set.
Restriction to $H_p$ eliminates the dots in the northwest-to-southeast portion of the V while moving the black dots in the 
southwest-to-northeast portion of the V to the top of their northwest-to-southeast diagonals. This process preserves the southwest condition
and has the claimed effect on $h$-sequences.
\end{proof}

Lemma~\ref{lem:sw-recursion} and its proof are best understood by example.
If $n = 5$ and $\AAA$ is the southwest
arrangement considered before (see \eqref{eqn:ex-hyperplane}), we have $p = 2$. The hyperplane $H_2$ has ordered basis $\ee_1, \ee_3, \ee_4, \ee_5$ with coordinate functions
$x_1, x_3, x_4, x_5$ and $\AAA^{(2)}$ is shown below.
Observe that $h(\AAA^{(2)}) = (1,2,3,1)$.

\vspace{0.1in}

\begin{center}
\begin{tikzpicture}[scale = 0.9]

\filldraw (1,1) circle (3pt);
\node at (1.05,0.6) {$x_1$};

\filldraw (5,1) circle (3pt);
\node at (5,0.6) {$x_3 - x_4$};

\draw (7,1) circle (3pt);
\node at (7,0.6) {$x_4 - x_5$};

\filldraw (3,1) circle (3pt);
\node at (3,0.6) {$x_1 - x_3$};

\draw (6,2) circle (3pt);
\node at (6,1.6) {$x_3 - x_5$};

\filldraw (2,2) circle (3pt);
\node at (2.05,1.6) {$x_3$};

\filldraw (4,2) circle (3pt);
\node at (4,1.6) {$x_1 - x_4$};

\filldraw (3,3) circle (3pt);
\node at (3.05,2.6) {$x_4$};

\draw (5,3) circle (3pt);
\node at (5,2.6) {$x_1 - x_5$};

\filldraw (4,4) circle (3pt);
\node at (4.05,3.6) {$x_5$};

\end{tikzpicture}
\end{center}

\subsection{Southwest algebra}
Solomon--Terao algebras coming from southwest arrangements are deeply tied to the superspace coinvariant ring
$SR$. Lemma~\ref{lem:st-hilbert-series} says that Solomon--Terao algebras have nice properties for free arrangements.
Southwest arrangements are not merely free, but also supersolvable.

\begin{proposition}
\label{prop:sw-free}
Any southwest arrangement is supersolvable, hence free.
\end{proposition}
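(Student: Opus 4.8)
The plan is to prove supersolvability of a southwest arrangement $\AAA$ by induction on $n$, exhibiting a modular coatom and using the fact that a hyperplane arrangement is supersolvable if and only if it has a maximal chain of modular flats, equivalently (in the rank-one-step formulation) if there is a modular hyperplane $H \in \AAA$ such that the restriction $\AAA^H$ is supersolvable. Recall that a hyperplane $H$ (viewed as a rank-$(n-1)$ flat) is \emph{modular} precisely when, for every $H' \in \AAA$ not contained in any flat below both, the flat $H \cap H'$ still lies in $\AAA$ in the appropriate sense; concretely, the standard criterion is that $H$ is modular iff for all $H', H'' \in \AAA$ with $H' \cap H'' \not\subseteq H$, there is a hyperplane $H''' \in \AAA$ with $H' \cap H'' \cap H \subseteq H'''$ and $H''' \cap H = H' \cap H$ — but for our purposes the cleanest route is the well-known coordinate-hyperplane trick: I would add the coordinate hyperplane direction and peel off one variable at a time.

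More concretely, first I would handle the case where $\AAA$ contains no coordinate hyperplane at all. In that case $\AAA$ is a subarrangement of the braid arrangement $\AAA_{\Phi^+}$ satisfying the southwest closure condition restricted to the $x_i - x_j$'s, which means $\AAA$ is a \emph{graphical arrangement} $\AAA_G$ for a graph $G$ on $[n]$ whose edge set is closed under the operation $\{i,j\} \in G, j > i+1 \Rightarrow \{i,j-1\} \in G$. One checks that such graphs are exactly the graphs in which, for each vertex $j$, the neighbors of $j$ among $\{1,\dots,j-1\}$ form an initial segment $\{c_j, c_j+1, \dots, j-1\}$; equivalently the ordering $n, n-1, \dots, 1$ is a perfect elimination ordering, so $G$ is chordal. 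By a theorem of Stanley (or Edelman–Reiner), $\AAA_G$ is supersolvable iff $G$ is chordal, which disposes of this base situation. Alternatively — and this is the approach I would actually write out to keep the paper self-contained — one inducts directly: the vertex $1$ has neighborhood an initial segment, and deleting vertex $1$ gives a smaller southwest arrangement, with the flat $x_1 = x_{c_1}$ (or the whole-space flat if $1$ is isolated) serving as the modular coatom.

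For the general case, where $\AAA$ does contain a coordinate hyperplane, I would use Lemma~\ref{lem:sw-recursion}: let $p = \max\{k : H_k \in \AAA\}$. I claim $H_p$ is a modular hyperplane of $\AAA$ whose restriction $\AAA^{(p)} = \AAA^{H_p}$ is again a southwest arrangement (in $\KK^{n-1}$, by Lemma~\ref{lem:sw-recursion}(2)), which is supersolvable by induction; hence $\AAA$ is supersolvable, and free by the standard implication (supersolvable $\Rightarrow$ free) recorded at the end of Section~\ref{Background}. To verify modularity of $H_p$: by the standard criterion it suffices to show that for any two hyperplanes $H', H'' \in \AAA \setminus \{H_p\}$, the intersection of the flat $H' \cap H''$ with $H_p$ is covered by a hyperplane of $\AAA$ through $H' \cap H'' \cap H_p$, or more simply that $\LL(\AAA)$ below $H_p$ together with the flats meeting $H_p$ behaves correctly — the geometry here is exactly the ``V-shaped'' set of hyperplanes through $x_p = 0$ identified in the proof of Lemma~\ref{lem:sw-recursion}, and the maximality of $p$ guarantees that no \emph{other} coordinate hyperplane sits above $p$, so the only hyperplanes creating new flats when intersected with $H_p$ are the $H_{i,p}$'s, which restrict to the $H_i$'s already present.

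\textbf{Main obstacle.} The real work is the modularity verification in the last step: one must show $H_p$ is a modular element of the intersection lattice, not merely that $\AAA^{(p)}$ is southwest. I expect this to require a careful case analysis of rank-two flats $H' \cap H''$ — splitting on whether $H', H''$ are among the $H_{i,j}$'s, the $H_i$'s, or one of each — and checking in each case that adding $H_p$ does not produce a ``broken'' rank-two subarrangement (i.e., a rank-two flat lying on exactly two hyperplanes of $\AAA$ in a way incompatible with modularity). Organizing this cleanly, perhaps by passing to the characterization of modular coatoms in graphical/signed-graphical arrangements (the arrangement is a subarrangement of the type $B$/type $D$ Weyl arrangement once coordinate hyperplanes appear), is where the argument will need the most care; the inductive skeleton itself is routine once modularity is in hand.
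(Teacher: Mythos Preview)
Your inductive skeleton rests on a mis-stated characterization of supersolvability. The ``rank-one-step formulation'' you invoke --- that $\AAA$ is supersolvable when there is a \emph{modular hyperplane} $H \in \AAA$ with $\AAA^H$ supersolvable --- is not correct. In the intersection lattice $\LL(\AAA)$ (ordered by reverse inclusion, as in Section~\ref{Background}) a hyperplane is an \emph{atom}, and every atom in a geometric lattice is automatically modular; so your ``main obstacle'' is vacuous. The genuine recursive characterization of supersolvability runs through modular \emph{coatoms} (rank-$(r-1)$ flats, which for an essential arrangement in $\KK^n$ are lines) together with the \emph{localization} $\AAA_X = \{H \in \AAA : X \subseteq H\}$, not through atoms and the restriction $\AAA^H$. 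Modularity of elements in the interval $[H_p,\hat{1}] \cong \LL(\AAA^{(p)})$ does not in general lift to modularity in $\LL(\AAA)$: for a concrete failure of your criterion, take $\AAA = \{x=0,\; y=0,\; z=0,\; x+y+z=0\}$ in $\KK^3$; every restriction $\AAA^H$ is a rank-$2$ arrangement (hence supersolvable), yet $\AAA$ has no modular coatom and is not supersolvable.

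The paper sidesteps this entirely with the trick you already noticed in your ``no coordinate hyperplanes'' case, but applied uniformly: pass to $\KK^{n+1}$ with coordinates $x_0,x_1,\dots,x_n$, replace each coordinate hyperplane $H_k$ by $x_0 - x_k = 0$, and observe that the resulting graphical arrangement $\AAA'$ on the vertex set $\{0,1,\dots,n\}$ has $\LL(\AAA') \cong \LL(\AAA)$. The southwest condition then says precisely that for each vertex $i \in \{0,1,\dots,n\}$ the up-neighbors of $i$ form an initial segment $\{i+1,\dots,m_i\}$, from which one checks that $n, n-1, \dots, 1, 0$ is a perfect elimination ordering, so the graph is chordal and Stanley's criterion applies. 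This handles the coordinate hyperplanes and the $x_i - x_j$ hyperplanes in one stroke, with no modularity bookkeeping at all. If you want to salvage an inductive argument in the spirit of your proposal, you would need to exhibit an explicit modular coatom (a line) of $\AAA$ and recurse on the localization --- essentially redoing the chordality/PEO argument in lattice language --- which is strictly more work than the paper's route.
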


\begin{proof}
Give $\KK^{n+1}$ the coordinates $x_0, x_1, \dots, x_n$.
For a simple graph $\Gamma$ on the vertex set $\{0, 1, \dots, n \}$, 
we have a {\em graphical arrangement} $\AAA_\Gamma$ in $\KK^{n+1}$ formed from the hyperplanes 
$x_i - x_j = 0$ for each edge $i-j$ of $\Gamma$.
If $\AAA$ is a southwest arrangement in $\KK^n$, we have a graphical arrangement $\AAA'$ in $\KK^{n+1}$
obtained by replacing each coordinate hyperplane $H_k$ in $\AAA$ with the hyperplane $x_0 - x_k$.
It is easy to see that $\AAA$ and ${\AAA}'$ have isomorphic intersection posets:
\[\LL(\AAA) \cong \LL({\AAA}').\]
The definition of supersolvability depends only on the intersection poset. In particular, $\AAA$ is supersolvable if and only if 
${\AAA}'$ is supersolvable.

A graph $\Gamma$ on $\{0, 1, \dots, n \}$ is {\em chordal} if it has no induced $k$-cycles for $k > 3$.
If $\AAA_\Gamma$ is a graphical arrangement, Stanley proved (see \cite[Cor.\ 4.10]{Stanley} and the note following that result)
that 
\begin{center}
$\AAA_\Gamma$ is supersolvable $\Leftrightarrow$ 
$\Gamma$ is chordal.
\end{center}
It is not hard to check that if $\AAA$ is a southwest arrangement, the graphical arrangement $\AAA'$ corresponds to a chordal
graph $\Gamma$.
Since every supersolvable arrangement is free, we are done.
\end{proof}

Thanks to the correspondence between subarrangements of $\AAA_{\widetilde{\Phi}^+}$ and 
graphical arrangements $\AAA_\Gamma$ for graphs $\Gamma$ on $\{0,1,\dots,n\}$,
Proposition~\ref{prop:sw-free} can be augmented. We have the following result on arbitrary subarrangements of $\AAA_{\Phi^+}$ and $\AAA_{\widetilde{\Phi}^+}$.


\begin{proposition}
\label{prop:graphical-properties}
Let $\Gamma$ be a graph on the vertex set $\{0,1,\dots,n\}$ and let $\AAA_\Gamma$ be the corresponding graphical arrangement in $\KK^{n+1}$.
Let $\AAA'_\Gamma \subseteq \AAA_{\widetilde{\Phi}^+}$ be the arrangement obtained from $\AAA_\Gamma$ by restriction to $x_0 = 0$.
The following are equivalent.
\begin{enumerate}
\item $\Gamma$ is chordal.
\item $\AAA_\Gamma$ is supersolvable.
\item $\AAA_\Gamma$ is free.
\item The Orlik-Solomon algebra $\OS(\AAA_{\Gamma})$ is Koszul.
\item The Orlik-Solomon ideal $I_{\AAA_\Gamma}$ is quadratic.
\end{enumerate}
Properties (1) - (5) are also equivalent for the arrangement $\AAA'_\Gamma$.
\end{proposition}

Proposition~\ref{prop:graphical-properties} is a collection of known results from the hyperplane arrangements literature.
We give the relevant references here.

\begin{proof}
For the equivalence of (1)-(3) for the graphical arrangement $\AAA_\Gamma$, see \cite[Thm.\ 3.3]{ER} and its proof.
If $\AAA$ is any arrangement we have 
\begin{center}
$\AAA$ is supersolvable $\Rightarrow$ $\AAA$ is free.
\end{center}
Since supersolvability depends only on the intersection poset and since $\AAA_\Gamma, \AAA'_\Gamma$
have the same intersection posets, we see that (1)-(3) are equivalent for $\AAA'_\Gamma$, as well.
\end{proof}

\begin{remark}
    It follows from work of Schenk and Suciu \cite[Thm.\ 6.4]{SchenckSuciu} that the equivalent conditions $(1)$-$(3)$ of Proposition~\ref{prop:graphical-properties} are also equivalent to (4) the Koszulality of the Orlik--Solomon algebra of $\AAA_\Gamma$ (or of $\AAA'_\Gamma$), as well as (5) the Orlik--Solomon ideal of $\AAA_\Gamma$ (or of $\AAA'_\Gamma$) beng quadratic.
\end{remark}

\subsection{Free bases for southwest arrangements}
In order to understand Solomon--Terao algebras coming from southwest arrangements, we will use an explicit free basis of their derivation modules.
Such a basis may be given as follows.  Let $\AAA \subseteq \AAA_{\widetilde{\Phi}^+}$ be a southwest 
arrangement. For $0 \leq i <j \leq n$, we define $\alpha_{i,j}=x_i-x_j$ if $ i \ne 0$ and $\alpha_{0,j}=x_j$. Define elements $\rho^\AAA_j \in \Der(S)$ for $1 \leq j \leq n$ by the formula
\begin{equation}
\rho^\AAA_j := \sum_{k \, = \, j}^n \left(  \prod_{H_{i,j} \, \in \, \AAA} \alpha_{i,k} \right) \partial_k
\end{equation}
where $\rho^\AAA_j = \partial_j + \partial_{j+1} + \cdots + \partial_n$ when 
$\AAA \cap \{ H_{i,j} \,:\, i = 0, 1, \dots, j-1 \} = \varnothing$.
Observe that the degree of $\rho^\AAA_j$ is given by the $j^{th}$ entry $h_j$ of the $h$-sequence
$h(\AAA) = (h_1, \dots, h_n)$. As an example, if $\AAA$ is the arrangement from (\ref{eqn:ex-hyperplane}), then $\rho^\AAA_4 = \alpha_{1, 4}\alpha_{2, 4}\alpha_{3,4}\partial_4 + \alpha_{1, 5}\alpha_{2, 5}\alpha_{3,5}\partial_5.$  

\begin{proposition}
\label{prop:derivation-module-basis}
If $\AAA$ is a southwest arrangement in $\KK^n$, then $\Der(\AAA)$ is a free $S$-module with basis
$\rho^\AAA_1, \dots, \rho^\AAA_n$. In particular, the exponents of $\AAA$ are given by $h(\AAA) = (h_1, \dots, h_n)$.
\end{proposition}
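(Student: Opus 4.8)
The plan is to verify the two defining properties of a free basis: (i) each $\rho^\AAA_j$ actually lies in $\Der(\AAA)$, and (ii) the $\rho^\AAA_j$ form a free $S$-basis of $\Der(\AAA)$. For (i), recall $\theta \in \Der(\AAA)$ iff $\alpha_{i,j} \mid \theta(\alpha_{i,j})$ for every $H_{i,j} \in \AAA$; so I would fix $H_{i,j} \in \AAA$ and compute $\rho^\AAA_\ell(\alpha_{i,j}) = \rho^\AAA_\ell(x_i - x_j)$ (or $\rho^\AAA_\ell(x_j)$ when $i = 0$) for each $\ell$. For $\ell > j$, both $x_i$ and $x_j$ are annihilated, so there is nothing to check. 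For $\ell = j$, the coefficient of $\partial_j$ in $\rho^\AAA_j$ is $\prod_{H_{i',j} \in \AAA} \alpha_{i',j}$, which is divisible by $\alpha_{i,j}$ since $H_{i,j} \in \AAA$; and the $\partial_i$ coefficient (if $i \neq 0$ and $\ell = i < j$) must be handled using the southwest condition — here is where $H_{i',i} \in \AAA$ for the relevant $i'$ forces divisibility. The upshot: $\rho^\AAA_\ell(\alpha_{i,j})$ is a combination of terms each divisible by $\alpha_{i,j}$, using southwest-closure to guarantee that the hyperplanes indexing the product for $\rho^\AAA_\ell$ include the ones needed. I expect this to be the most delicate bookkeeping step, and I would organize it by cases on whether $\ell = j$, $\ell = i$, or $j < \ell$, always invoking Definition~\ref{defn:sw-arrangement} to promote membership $H_{i,j} \in \AAA$ to the membership facts needed for divisibility.

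For (ii), I would appeal to Saito's criterion: a collection $\rho_1, \dots, \rho_n \in \Der(\AAA)$ of homogeneous derivations is a free $S$-basis for $\Der(\AAA)$ provided $\sum_j \deg \rho_j = |\AAA|$ (equivalently $\det(\text{coefficient matrix})$ is a nonzero scalar multiple of $Q(\AAA)$), or — more robustly — provided $\det(\text{coefficient matrix}) \neq 0$ and $\deg \det = |\AAA|$. The coefficient matrix $M$ of $\rho^\AAA_1, \dots, \rho^\AAA_n$ is upper triangular (since $\rho^\AAA_j$ only involves $\partial_k$ for $k \geq j$), so $\det M = \prod_{j=1}^n (\text{coefficient of } \partial_j \text{ in } \rho^\AAA_j) = \prod_{j=1}^n \prod_{H_{i,j} \in \AAA} \alpha_{i,j}$, which is exactly $\pm Q(\AAA)$, of degree $|\AAA|$. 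That gives freeness and a basis with $\deg \rho^\AAA_j = h_j$, and Terao's Factorization Theorem~\eqref{eq:Terao's-Factorization-Thm} (or just the definition of exponents) identifies $(h_1, \dots, h_n)$ as the exponents.

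One subtlety: the triangularity argument needs the coefficients of $\partial_j$ in $\rho^\AAA_j$ to be exactly $\prod_{H_{i,j} \in \AAA} \alpha_{i,j}$ (with the convention $= 1$ when no such $H_{i,j}$ lies in $\AAA$, so that $\rho^\AAA_j$ starts $\partial_j + \cdots$), which matches the stated formula. So the only real work is the divisibility verification in part (i); part (ii) is then immediate from the upper-triangular shape and Saito's criterion. I would present (ii) first as the short payoff, then devote the bulk of the proof to the case analysis in (i). The main obstacle is making the southwest-closure bookkeeping in (i) clean — in particular handling the coordinate hyperplanes $H_{0,j}$, since restricting to or deleting $x_j = 0$ interacts with the $j > i+1$ clause of the southwest condition in a way that must be tracked carefully.
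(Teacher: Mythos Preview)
Your approach is correct and matches the paper's: apply Saito's criterion by checking (a) each $\rho^\AAA_\ell \in \Der(\AAA)$ via a divisibility case analysis, and (b) the upper-triangular coefficient matrix has diagonal product $\pm Q(\AAA)$ (equivalently, the degree sum is $|\AAA|$ and the $\rho^\AAA_\ell$ are $S$-independent).

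One correction to your sketch of (i): your case list $\ell = j$, $\ell = i$, $j < \ell$ is incomplete, and the southwest condition enters in a different place than you indicate. The clean trichotomy (for a hyperplane $H_{i,j} \in \AAA$ with $0 \le i < j$ and derivation $\rho^\AAA_\ell$) is: if $j < \ell$ then $\rho^\AAA_\ell(\alpha_{i,j}) = 0$; if $i < \ell \le j$ then $\rho^\AAA_\ell(\alpha_{i,j}) = \pm\prod_{H_{i',\ell}\in\AAA}\alpha_{i',j}$, and here the southwest condition is used to deduce $H_{i,\ell}\in\AAA$ from $H_{i,j}\in\AAA$, giving the factor $\alpha_{i,j}$; if $\ell \le i$ (forcing $i\neq 0$) then $\rho^\AAA_\ell(x_i-x_j)$ is a difference of two products that coincide upon substituting $x_i=x_j$, so divisibility holds with no appeal to the southwest hypothesis.
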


\begin{proof}
We abbreviate $\rho_j := \rho^\AAA_j$.
Saito's Criterion \cite{Saito} states that $\rho_1, \dots, \rho_n$  is a free $S$-basis of $\Der(\AAA)$ 
if the following conditions are satisfied:
\begin{enumerate}
\item we have $\rho_j \in \Der(\AAA)$ for each $j$,
\item $\deg \rho_1 + \cdots + \deg \rho_n = |\AAA|$,
\item $\rho_1, \dots, \rho_n$ are linearly independent over $S$.
\end{enumerate}
 (2) follows from the definition of the $\rho_j$.
 (3) follows from the fact that each $\rho_j$ is an $S$-linear combination of $\partial_j,\dots,\partial_n$
where the coefficient of $\partial_j$ is nonzero.
We prove (1) as follows.

Fix $1 \leq j \leq n$ and $\alpha_{s,t} \in \mathcal A$.
We prove $\alpha_{s,t} \mid \rho_j(\alpha_{s,t})$ in the following case-by-case fashion.
\medskip

{\bf Case 1:}
{\em Suppose $t <j$.} 

Then $\rho_j (\alpha_{s,t}) = 0$ so that $\alpha_{s,t} \mid \rho_j(\alpha_{s,t})$.

{\bf Case 2:}
{\em Suppose $s < j \leq t$.} 

Since $H_{s,t} \in  \AAA$ implies $H_{s,j} \in  \AAA$ by the southwest condition and
\[\rho_j (\alpha_{s,t}) = \prod_{H_{i,j} \, \in \, \mathcal A} \alpha_{i,t}\]
by direct computation, we have $\alpha_{s,t} \mid \rho_j(\alpha_{s,t})$.

{\bf Case 3:}
{\em Suppose $j \leq s$.} 

We compute 
\[ \rho_j (\alpha_{s,t}) =\prod_{H_{i,j} \, \in \, \mathcal A} \alpha_{i,s} - \prod_{H_{i,j} \, \in  \, \mathcal A} \alpha_{i,t}.\]
This polynomial vanishes if we substitute $x_s=x_t$, which means that $\rho_j(\alpha_{s,t})$ is contained in the ideal $(x_s-x_t)= (\alpha_{s,t})$, as desired. 
\end{proof}

\begin{remark}
\label{rmk:ziegler}
In his Ph.D. thesis, Ziegler developed \cite{Ziegler} a systematic method for computing a free basis of the derivation module 
$\Der(\AAA)$ of a supersolvable arrangement $\AAA$ from an M-chain in the intersection lattice $\LL(\AAA)$.
By Proposition~\ref{prop:sw-free}, this method may be used to find a basis of $\Der(\AAA)$ for southwest arrangements $\AAA$.
Thanks to the work of Stanley \cite[Cor.\ 4.10]{Stanley} and Edelman--Reiner \cite[Thm.\ 3.3]{ER},
Ziegler's method would apply to give an explicit free basis for $\Der(\AAA)$ whenever $\AAA \subseteq \AAA_{\widetilde{\Phi}^+}$ 
is a subarrangement whose corresponding graph on $\{0,1,\dots,n\}$ is chordal.
The authors have not computed such a basis at this level of generality.
\end{remark}

\subsection{Exact sequences of derivation modules}
The form of the free basis in Proposition~\ref{prop:derivation-module-basis} will play an important role in our analysis.
Our inductive arguments will also require short exact sequences which connect these modules and the corresponding Solomon--Terao algebras
with respect to the map $\ii$.
These exact sequences arise in the general context of free arrangements, and are closely related to the following 
Addition-Deletion Theorem of Terao.

\begin{theorem}
\label{thm:addition}
{\em (Terao \cite{TeraoAddition})}
Let $\AAA$ be an arrangement in $\KK^n$ and let $H \in \AAA$. Let $e_1 \leq \cdots \leq e_n$ be a weakly increasing sequence of nonnegative integers and let $1 \leq k \leq n$. Any two of the following 
implies the third. 
\begin{enumerate}
\item $\AAA$ is free with exponent multiset $\{e_1, \dots, e_{k-1}, e_k, e_{k+1}, \dots , e_n\}$.
\item $\AAA \setminus H$ is free with exponent multiset $\{e_1, \dots, e_{k-1},  e_k - 1, e_{k+1}, \dots, e_n\}$.
\item $\AAA^H$ is free with exponent multiset $\{e_1, \dots, e_{k-1},  e_{k+1}, \dots, e_n\}$.
\end{enumerate}
\end{theorem}

Theorem~\ref{thm:addition} is tied 
to a short exact sequence of 
derivation modules.
Let $\AAA$ be any arrangement in $V = \KK^n$
and let $H = H_\alpha \in \AAA$ 
be a hyperplane corresponding 
to a linear form $\alpha$. Write
$\bar{S} := S/(\alpha)$ and regard $\bar{S}$ as the coordinate ring $\bar{S} = \KK[H]$ of the hyperplane $H$.
For any $\psi \in \Der(\AAA)$, we have 
$\alpha \mid \psi(\alpha)$ in $S$, so that 
we have a $\KK$-linear map
\begin{equation}
    \overline{\psi}: \bar{S} \longrightarrow \bar{S} 
    \quad \quad
    \overline{\psi}: f + (\alpha) \mapsto \psi(f) + 
    (\alpha)
\end{equation}
of the quotient ring $\bar{S}$.  

We claim that $\overline{\psi}$ is a derivation of the polynomial ring $\bar{S}$ for all $\psi \in \Der(\AAA)$. To see this, we use the following general definition of $\KK$-algebra derivations; see e.g. \cite[p.\ 385]{Eisenbud}. Let $B$ be a $\KK$-algebra and let $\rho: B \to B$ be a $\KK$-linear map. Then $\rho$ is a {\em derivation of $B$} if it satisfies the Leibniz rule $\rho(a b) = \rho(a) b + a \rho(b)$ for all $a,b \in B$. This coincides with our definition of derivations when $B$ is a polynomial ring over $\KK$; see e.g. \cite[Prop.\ 16.1]{Eisenbud}. Under this definition, the fact that $\overline{\psi} \in \Der(\bar{S})$ is inherited from $\psi \in \Der(S)$.

It turns out \cite[Prop.\ 4.44]{OT}
that $\overline{\psi} \in \Der(\AAA^H)$ for
all $\psi \in \Der(\AAA)$.\footnote{After a linear 
change of coordinates, we may assume that 
$\alpha = x_1$. One checks that 
$\beta \mid \overline{\psi}(\beta)$ for all linear 
forms
$\beta \in H^* = \Hom_\KK(H,\KK)$ corresponding to the hyperplanes
of $\AAA^H$.}
We therefore have a well-defined map
\begin{equation}
    \eta_H: \Der(\AAA) \longrightarrow \Der(\AAA^H)
    \quad \quad
    \eta_H: \psi \mapsto \overline{\psi}.
\end{equation}
Observe that $\eta_H$ preserves degree.

The map $\eta_H: \Der(\AAA) \rightarrow \Der(\AAA^H)$
has a geometric interpretation.
We may regard $\Der(S)$
as the $S$-module of polynomial vector fields on 
$V$. Under this identification, the derivation
module $\Der(\AAA) \subseteq \Der(S)$ consists 
of vector fields which are parallel to the 
hyperplanes of $\AAA$.  Restricting any such 
vector field to $H \subset V$ yields a vector 
field on $H$ which is parallel to the hyperplanes
of the restricted arrangement $\AAA^H$.
The map $\eta_H: \Der(\AAA) \rightarrow \Der(\AAA^H)$
is precisely this restriction.
The following short exact sequence
involving $\eta_H$ may be found in e.g. 
\cite[Thm.\ 4.46]{OT}.

\begin{theorem}
\label{thm:exact-triple}
The $\AAA$ be an arrangement and let $H \in \AAA$ be a hyperplane which satisfies the conditions of 
Theorem~\ref{thm:addition}. Let $\alpha$ be a linear form with kernel $H$.
We have a short exact sequence
\begin{equation}
0 \longrightarrow \Der(\AAA \setminus H) \xrightarrow{ \, \, \times \alpha \, \, } \Der(\AAA) 
\xrightarrow{\, \, \eta_H \, \, } \Der(\AAA^H) \longrightarrow 0
\end{equation}
where the first map is multiplication by $\alpha$ and the second map is homogeneous of degree 0.
\end{theorem}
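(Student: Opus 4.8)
The plan is to verify the three ingredients of a short exact sequence—injectivity of $\times\alpha$, exactness at $\Der(\AAA)$, and surjectivity of $\eta_H$—in that order. The first two hold for \emph{any} $H\in\AAA$; only surjectivity will use the hypothesis that $H$ satisfies the conditions of Theorem~\ref{thm:addition}.

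First I would dispatch injectivity and well-definedness. Since $\Der(S)=S\otimes_\KK V$ with $S$ a domain, multiplication by the nonzero linear form $\alpha$ is injective on $\Der(S)$, hence on the submodule $\Der(\AAA\setminus H)$. For well-definedness of $\times\alpha$, if $\psi\in\Der(\AAA\setminus H)$ and $\beta$ cuts out a hyperplane $K\in\AAA$, then $(\alpha\psi)(\beta)=\alpha\,\psi(\beta)$ is divisible by $\beta$: when $K\ne H$ because $\beta\mid\psi(\beta)$, and when $K=H$ because $\alpha=\beta$. Thus $\alpha\psi\in\Der(\AAA)$. The well-definedness of $\eta_H$ with image in $\Der(\AAA^H)$ is already recorded in the excerpt (\cite[Prop.\ 4.44]{OT}), and $\eta_H$ is degree-preserving.

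Next, exactness at $\Der(\AAA)$. The composite $\eta_H\circ(\times\alpha)$ sends $\psi$ to the derivation $f+(\alpha)\mapsto\alpha\,\psi(f)+(\alpha)=0$ of $\bar S=S/(\alpha)$, so it vanishes. Conversely, suppose $\varphi\in\Der(\AAA)$ with $\eta_H(\varphi)=0$, i.e.\ $\varphi(f)\in(\alpha)$ for all $f\in S$. After a linear change of coordinates take $\alpha=x_1$; writing $\varphi=\sum_i f_i\partial_i$, the relations $\varphi(x_i)=f_i\in(x_1)$ show $\varphi=x_1\varphi'$ with $\varphi'=\sum_i(f_i/x_1)\partial_i\in\Der(S)$. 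For each $K\in\AAA\setminus H$ with linear form $\beta$, from $\alpha\,\varphi'(\beta)=\varphi(\beta)\in(\beta)$ and the fact that $\beta$ is irreducible and coprime to $\alpha$ (as $K\ne H$), we get $\beta\mid\varphi'(\beta)$; hence $\varphi'\in\Der(\AAA\setminus H)$. Therefore $\ker\eta_H=\alpha\,\Der(\AAA\setminus H)$, which is exactly exactness at $\Der(\AAA)$.

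Finally, surjectivity, where Theorem~\ref{thm:addition} enters: since two of its three conditions hold, all three hold, so $\AAA$, $\AAA\setminus H$, $\AAA^H$ are free with exponents $(e_1,\dots,e_n)$, $(e_1,\dots,e_{n-1},e_n-1)$, $(e_1,\dots,e_{n-1})$. By the previous step we have a short exact sequence $0\to\Der(\AAA\setminus H)\xrightarrow{\times\alpha}\Der(\AAA)\to\eta_H(\Der(\AAA))\to0$ in which the first map shifts degree by $1$, so
\begin{align*}
\Hilb(\eta_H(\Der(\AAA));q) &= \Hilb(\Der(\AAA);q)-q\cdot\Hilb(\Der(\AAA\setminus H);q) \\
&= \frac{\sum_{i=1}^{n}q^{e_i}-\sum_{i=1}^{n-1}q^{e_i+1}-q^{e_n}}{(1-q)^n}
= \frac{\sum_{i=1}^{n-1}q^{e_i}}{(1-q)^{n-1}},
\end{align*}
which is precisely $\Hilb(\Der(\AAA^H);q)$, since $\bar S$ is a polynomial ring in $n-1$ variables and $\Der(\AAA^H)$ is free over it with a homogeneous basis in degrees $e_1,\dots,e_{n-1}$. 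As $\eta_H(\Der(\AAA))$ is a graded $\bar S$-submodule of $\Der(\AAA^H)$ with the same Hilbert series, the two coincide, giving surjectivity. The computations in the first three paragraphs are routine unique-factorization arguments; the load-bearing step—the only place the addition--deletion hypotheses are used—is the Hilbert-series identity above, and the one subtlety to handle carefully is the change of base ring from $S$ (in $n$ variables, carrying $\Der(\AAA)$ and $\Der(\AAA\setminus H)$) to $\bar S$ (in $n-1$ variables, carrying $\Der(\AAA^H)$), which is exactly what the telescoping makes consistent.
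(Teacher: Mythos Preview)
Your proof is correct and follows essentially the same approach as the paper. The paper does not give a self-contained proof but cites \cite[Thm.~4.46]{OT} for the exact sequence \eqref{eq:almost-short-exact} (valid for any $H\in\AAA$) and then remarks in a footnote that surjectivity of $\eta_H$ under the addition--deletion hypotheses follows from a Hilbert series comparison---precisely the argument you carry out in detail.
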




Importantly for us, triples satisfying the hypotheses 
of Theorem~\ref{thm:exact-triple} arise from essential southwest arrangements as follows.
Observe that every essential subarrangement of $\AAA_{\widetilde{\Phi}^+}$ contains at least one coordinate hyperplane.

\begin{lemma}
\label{lem:sw-addition}
Let $\AAA$ be an essential southwest arrangement.
Define $p := \max \{ k \,:\, H_{k} \in \AAA \}$.  The triple $(\AAA, \AAA \setminus H_{p}, \AAA^{(p)})$ satisfies
the conditions of Theorem~\ref{thm:addition}.
\end{lemma}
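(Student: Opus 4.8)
The plan is to verify directly that all three conditions of Theorem~\ref{thm:addition} hold for the triple, which is more than enough since the theorem only requires ``any two'' of them. The argument amounts to a bookkeeping exercise once the structural results on southwest arrangements already proved are invoked.

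First I would record that $p$ is well-defined and $H_p \in \AAA$: since $\AAA$ is essential it contains a coordinate hyperplane (as observed just before the statement), so $\{ k : H_k \in \AAA \} \neq \varnothing$; moreover $h_p = h_p(\AAA) \geq 1$ because $H_p$ itself is counted in this statistic, so the formulas below make sense. Next I would apply Proposition~\ref{prop:sw-free} (southwest arrangements are free) together with Proposition~\ref{prop:derivation-module-basis} (their exponents are the $h$-sequence) to conclude that $\AAA$ is free with exponents $h(\AAA) = (h_1, \dots, h_n)$. Then Lemma~\ref{lem:sw-recursion} tells me that $\AAA \setminus H_p$ is again a southwest arrangement, now with $h$-sequence $(h_1, \dots, h_{p-1}, h_p - 1, h_{p+1}, \dots, h_n)$, and that $\AAA^{(p)} = \AAA^{H_p}$ is a southwest arrangement in $H_p$ with $h$-sequence $(h_1, \dots, h_{p-1}, h_{p+1}, \dots, h_n)$; applying Propositions~\ref{prop:sw-free} and~\ref{prop:derivation-module-basis} once more, both of these are free with exponents equal to their respective $h$-sequences.

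Finally I would reconcile the indexing with the statement of Theorem~\ref{thm:addition}. Because exponents are only determined as a multiset, I may list the exponents of $\AAA$ in the order $(e_1, \dots, e_n) := (h_1, \dots, h_{p-1}, h_{p+1}, \dots, h_n, h_p)$, that is, with $h_p$ placed last. With this choice the exponents of $\AAA \setminus H_p$ are exactly $(e_1, \dots, e_{n-1}, e_n - 1)$ and those of $\AAA^{H_p}$ are exactly $(e_1, \dots, e_{n-1})$. Hence conditions (1), (2), and (3) of Theorem~\ref{thm:addition} all hold, so in particular the triple $(\AAA, \AAA \setminus H_p, \AAA^{(p)})$ satisfies the hypotheses of that theorem.

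I do not expect a genuine obstacle here: every needed ingredient (freeness, the identification of exponents with the $h$-sequence, and the behavior of the $h$-sequence under deletion of and restriction to $H_p$) is already available. The one point to handle with care is checking that the exponent which drops by $1$ under deletion is the \emph{same} one that disappears under restriction --- by Lemma~\ref{lem:sw-recursion} this is $h_p$ in both cases --- together with the minor bookkeeping that $\AAA^{(p)}$ lives in the $(n-1)$-dimensional space $H_p$, with coordinates $x_1, \dots, x_{p-1}, x_{p+1}, \dots, x_n$, which Lemma~\ref{lem:sw-recursion}(2) already arranges.
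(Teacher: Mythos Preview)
Your proposal is correct and follows the same approach as the paper, which simply cites Lemma~\ref{lem:sw-recursion} and Proposition~\ref{prop:sw-free}; you have merely spelled out the details (including the use of Proposition~\ref{prop:derivation-module-basis} to identify the exponents with the $h$-sequence) and handled the bookkeeping of reordering the exponents so that $h_p$ sits in the last slot.
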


\begin{proof}
This follows from Lemma~\ref{lem:sw-recursion} and Proposition~\ref{prop:sw-free}.
\end{proof}

\section{The algebra $S/((S_+^{\mathfrak S_n}):f_J)$ and the arrangement $\AAA_J$}
\label{AJ}

Recall that we are interested in a basis of the algebra $S/((S_+^{\mathfrak S_n}):f_J)$.
In this section, we show that this algebra is the Solomon--Terao algebra of a hyperplane arrangement,
and as an application we give an explicit generating set of the ideal $(S_+^{\mathfrak S_n}):f_J$. Doing this requires a lemma on when products over subsets  $T \subseteq \widetilde{\Phi}^+$ lie in the classical coinvariant ideal $(S^{\symm_n}_+)$. We write $V = \KK^n$ so that $\widetilde{\Phi}^+$ is a subset of $V^* = \Hom_\KK(V,\KK) =  \KK \cdot \{x_1, \dots, x_n\}$.

\begin{lemma}
    \label{lem:cospan_containment}
    Let $T \subseteq \widetilde{\Phi}^+$.
    We have $\prod_{\alpha \in T} \alpha \in (S^{\symm_n}_+)$  if and only if  the complement $\widetilde{\Phi}^+ - T$  of $T$ does not span $V^* = \KK \cdot \{x_1, \dots, x_n\}$.
\end{lemma}

The proof of Lemma~\ref{lem:cospan_containment} is technical and involves methods which are independent from the rest of the paper. As such, we relegate this proof to the appendix (Section~\ref{sec:appendix}).

Recall that $\ii: \Der(S) \rightarrow S$ is the $S$-module homomorphism defined by $\ii: \partial_i \mapsto 1$ for all $i$. The following result gives a link between Solomon--Terao algebras $\ST(\AAA,\ii)$ of subarrangements $\AAA \subseteq \AAA_{\widetilde{\Phi}^+}$ and coinvariant theory.


\begin{proposition}
\label{prop:st-general}
Let $\AAA \subseteq \AAA_{\widetilde{\Phi}^+}$ be an arbitrary subarrangement and let
\begin{equation*}
\beta_\AAA := \prod_{\substack{0 \, \leq \, i \, < \, j \, \leq \, n \\ H_{i,j} \, \notin \, \AAA}} \alpha_{i,j}.
\end{equation*}
\begin{enumerate}
\item 
The algebra $\ST(\AAA,\ii)$ is a quotient of the coinvariant ring $S/(S^{\symm_n}_+)$.
\item If $\AAA$ is not essential then $\ii_\AAA = S$ so that $\ST(\AAA,\ii) = 0$.
\item If $\AAA$ is a free arrangement then $\ii_\AAA = (S^{\symm_n}_+) : \beta_\AAA$.
\end{enumerate}
\end{proposition}

\begin{proof}
We start with the proof of (1). Recall that $\aaa: \Der(S) \rightarrow S$ is the $S$-module homomorphism defined by $\aaa: \partial_i \mapsto x_i$ for all $i$.
As mentioned in the introduction, we have $\aaa(\Der(\AAA_{\Phi^+})) = (S^{\symm_n}_+)$ (see \cite[Theorem 3.9]{AHMMS}). 
On the other hand, the form of the free bases in Proposition~\ref{prop:derivation-module-basis} for the southwest 
arrangements $\AAA = {\AAA}_{\Phi^+}, \AAA_{\widetilde{\Phi}^+}$
implies that 
\begin{equation}
\label{invariantii}
\ii(\Der({\AAA}_{\widetilde \Phi^+})) = \aaa(\Der(\AAA_{\Phi^+})) =  (S^{\symm_n}_+).
\end{equation}
Since $\AAA \subseteq \AAA_{\widetilde{\Phi}^+}$ implies $\Der(\AAA_{\widetilde{\Phi}^+}) \subseteq \Der(\AAA)$,
 (1) follows.

 For (2), adopt the shorthand $H_i := H_{x_i}$ and $H_{ij} := H_{x_i - x_j}$. We may assume without loss of generality that $H_i, H_{ij} \in \AAA$ implies $H_j \in \AAA$ and that $H_{ij}, H_{jk} \in A$ implies $H_{ik} \in \AAA$. Let $T = \{1 \leq i \leq n \,:\, H_i \in \AAA\}$. Then $T \neq [n]$ because $A$ is essential. We have an equivalence relation on $[n]-T$ defined by \[i \sim j \Leftrightarrow H_{ij} \in \AAA\] (where $H_{ji} := H_{ij}$). Letting $C_1, ..., C_r$ be the resulting equivalence classes, we have
\begin{equation}
\sum_{i \, \in \, C_1 \, \sqcup \, \cdots \, \sqcup \, C_r} \partial_i \in \Der(\AAA)
\end{equation}
since this polynomial derivation vanishes on every $\alpha \in \widetilde{\Phi}^+$ for which $H_\alpha \in \AAA$.
We conclude that $\ii_\AAA = S$.

For  (3), suppose first that $\AAA$ is nonessential.
By  (2) we have $\ii_\AAA = S$ and by Lemma~\ref{lem:cospan_containment} we have
$\beta_\AAA \in (S^{\symm_n}_+)$ so that $(S^{\symm_n}_+) : \beta_\AAA = S$, as well.
(This paragraph did not use freeness.)

Now suppose that $\AAA$ is essential and free.
Since $\bigcap_{H \in \AAA} H = \{ 0 \}$, the derivation module $\Der(\AAA)$ has no nonzero elements of degree 0,
which implies that $\ii_\AAA$ has no nonzero elements of degree 0.
Since 
$\beta_\AAA \notin (S^{\symm_n}_+)$ by Lemma~\ref{lem:cospan_containment} and $\ii(\Der(\AAA_{\widetilde{\Phi}^+})) = (S^{\symm_n}_+)$ by \eqref{invariantii},
Lemma~\ref{lem:derivation_colon} applies to prove
 (3).
\end{proof}


For a subset $J \subseteq [n]$, recall that $f_J = \prod_{j \in J} x_j \left( \prod_{j < i \leq n} (x_j - x_i) \right)$.
Proposition \ref{prop:st-general} (3) leads us to consider the arrangement $\AAA_J$ in $\KK^n$ with $\beta_{\AAA} =f_J$. The hyperplanes of $\AAA_J$ are therefore
\begin{equation}
\AAA_J = \{ H_{x_j - x_i} \,:\, j \not \in J, \, i > j \} \cup \{ H_{x_j} \,:\, j \not \in J \}
\end{equation}
so that $\AAA_J \subseteq \AAA_{\widetilde{\Phi}^+}$. 
When $n = 5$ and $J = \{2,4\}$ the hyperplanes of $\AAA_J$ are as follows.

\vspace{0.1in}

\begin{center}
\begin{tikzpicture}[scale = 0.9]

\filldraw (0,0) circle (3pt);
\node at (0.05,-0.4) {$x_1$};

\filldraw (2,0) circle (3pt);
\node at (2,-0.4) {$x_1 - x_2$};

\draw (4,0) circle (3pt);
\node at (4,-0.4) {$x_2 - x_3$};

\filldraw (6,0) circle (3pt);
\node at (6,-0.4) {$x_3 - x_4$};

\draw (8,0) circle (3pt);
\node at (8,-0.4) {$x_4 - x_5$};

\draw (1,1) circle (3pt);
\node at (1.05,0.6) {$x_2$};

\filldraw (3,1) circle (3pt);
\node at (3,0.6) {$x_1 - x_3$};

\draw (5,1) circle (3pt);
\node at (5,0.6) {$x_2 - x_4$};

\filldraw (7,1) circle (3pt);
\node at (7,0.6) {$x_3 - x_5$};

\filldraw (2,2) circle (3pt);
\node at (2.05,1.6) {$x_3$};

\filldraw (4,2) circle (3pt);
\node at (4,1.6) {$x_1 - x_4$};

\draw (6,2) circle (3pt);
\node at (6,1.6) {$x_2 - x_5$};

\draw (3,3) circle (3pt);
\node at (3.05,2.6) {$x_4$};

\filldraw (5,3) circle (3pt);
\node at (5,2.6) {$x_1 - x_5$};

\filldraw (4,4) circle (3pt);
\node at (4.05,3.6) {$x_5$};

\end{tikzpicture}
\end{center}

In particular, we see that the arrangements $\AAA_J$ are not usually southwest, and vice versa.
Like southwest arrangements, the $\AAA_J$ have nice combinatorial properties. Recall the notion of $\stair(J)_i$ from (\ref{eqn:staircase}).

\begin{lemma}
\label{lem:aj-characteristic}
The characteristic polynomial of $\AAA_J$ is given by
$$
\chi_{\AAA_J}(t) = \prod_{i \, = \, 1}^n (t - \stair(J)_i).
$$
In particular, $\AAA_J$ is supersolvable and free with exponents
$(\stair(J)_1, \dots, \stair(J)_n)$.\end{lemma}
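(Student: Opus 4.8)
The plan is to recognize $\AAA_J$ as a graphical arrangement in disguise and feed it into the dictionary of Proposition~\ref{prop:graphical-properties}. Form the graph $\Gamma_J$ on the vertex set $\{0,1,\dots,n\}$ with an edge $\{i,j\}$ for $1 \le i < j \le n$ whenever $i \notin J$, and an edge $\{0,j\}$ whenever $j \notin J$. Replacing each coordinate hyperplane $x_j = 0$ of $\AAA_J$ by $x_0 - x_j = 0$ identifies $\AAA_J$ with the restriction $\AAA'_{\Gamma_J}$ of the graphical arrangement $\AAA_{\Gamma_J}$ to $x_0 = 0$, in the notation of Proposition~\ref{prop:graphical-properties}; in particular $\LL(\AAA_J) \cong \LL(\AAA_{\Gamma_J})$. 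So it suffices to prove that $\Gamma_J$ is chordal: Proposition~\ref{prop:graphical-properties} then gives that $\AAA_J$ is supersolvable and free, and only the exponents remain to be pinned down.

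First I would establish chordality by exhibiting the perfect elimination ordering $(n, n-1, \dots, 1, 0)$. The neighbors of the vertex $n$ are $\{0\} \cup \{i < n : i \notin J\}$ when $n \notin J$ and $\{i < n : i \notin J\}$ when $n \in J$; in either case any two of these vertices are joined by an edge of $\Gamma_J$, so $n$ is simplicial. Deleting $n$ yields precisely the graph $\Gamma_{J \cap [n-1]}$ on $\{0,\dots,n-1\}$, so induction on $n$ (with trivial base case $n = 0$) shows that $\Gamma_J$ is chordal, whence $\AAA_J$ is supersolvable and free.

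It remains to compute $\chi_{\AAA_J}(t)$. Since $\LL(\AAA_J)\cong\LL(\AAA_{\Gamma_J})$ with dimensions of corresponding flats differing by the uniform shift $1$, we have $\chi_{\AAA_{\Gamma_J}}(t) = t\,\chi_{\AAA_J}(t)$, and the characteristic polynomial of a graphical arrangement is the chromatic polynomial of the graph, so $\chi_{\AAA_J}(t) = \chi_{\Gamma_J}(t)/t$. Now build $\Gamma_J$ back up in the order $0,1,2,\dots,n$: each new vertex $j\ge 1$ attaches to a clique among the previously added vertices, namely to $\{0\}\cup\{i<j : i\notin J\}$ if $j\notin J$ and to $\{i<j:i\notin J\}$ if $j\in J$, and a short check against the recursion \eqref{eqn:staircase} shows that in both cases the size of this set is $|[j]\setminus J| = \stair(J)_j$. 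The standard fact that $\chi_\Gamma(t) = (t-d)\,\chi_{\Gamma\setminus v}(t)$ whenever $v$ is simplicial with $d$ neighbors then gives $\chi_{\Gamma_J}(t) = t\prod_{j=1}^n (t-\stair(J)_j)$, hence $\chi_{\AAA_J}(t) = \prod_{j=1}^n (t-\stair(J)_j)$. Finally, as $\AAA_J$ is free, Terao's Factorization Theorem (see \eqref{eq:Terao's-Factorization-Thm}) forces the exponents of $\AAA_J$ to be the multiset of roots of $\chi_{\AAA_J}(t)$, i.e.\ $(\stair(J)_1,\dots,\stair(J)_n)$.

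I expect no conceptual obstacle here once the graphical translation is set up; the only slightly fiddly points are the case analysis verifying that the top vertex is simplicial and that deletion returns an arrangement of the same type, and the bookkeeping that the back-neighbor count of $j$ equals $\stair(J)_j$ for both $j\in J$ and $j\notin J$.
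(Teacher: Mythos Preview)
Your proof is correct, but it takes a different route from the paper's. The paper computes $\chi_{\AAA_J}(t)$ directly via the Finite Fields Method: it counts points $(a_1,\dots,a_n)\in(\ZZ/p\ZZ)^n$ avoiding the hyperplanes of $\AAA_J$ by choosing the coordinates in order $a_1,a_2,\dots,a_n$, and observes that $a_i$ must avoid exactly $\stair(J)_i$ values (namely $0$ and the previously chosen $a_j$ with $j\notin J$). Supersolvability and freeness are then left to a one-line remark after the proof, pointing at chordality of the associated graph (or, implicitly, at the equivalence in Proposition~\ref{prop:graphical-properties}, since the characteristic polynomial has just been shown to split over $\ZZ$).

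Your argument instead makes the graphical translation the centerpiece: you exhibit the perfect elimination ordering $(n,n-1,\dots,1,0)$ on $\Gamma_J$, get chordality and hence supersolvability/freeness for free from Proposition~\ref{prop:graphical-properties}, and then peel off the chromatic polynomial factor by factor using simpliciality. The two computations are really the same count dressed differently---the number of back-neighbors of $j$ in your ordering is exactly the number of forbidden values for $a_j$ in the paper's---but your version has the virtue of proving all three conclusions (the formula for $\chi_{\AAA_J}$, supersolvability, and freeness) in one self-contained pass, whereas the paper's proof strictly speaking only establishes the characteristic polynomial and gestures at the rest.
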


\begin{proof}
We apply the Finite Fields Method; see e.g. \cite[Sec. 5.1]{Stanley}.  Let $p$ be a large prime. We have
\begin{equation}
\chi_{\AAA_J}(p) = \# \left(  (\ZZ/p\ZZ)^n - \bigcup_{H \, \in \, \AAA_J} H \right)
\end{equation}
where we interpret the hyperplanes of $\AAA_J$ as living in $(\ZZ/p\ZZ)^n$ by reduction modulo $p$.
We are reduced to showing
\begin{equation}
\# \left(  (\ZZ/p\ZZ)^n - \bigcup_{H \, \in \, \AAA_J} H \right) =  \prod_{i \, = \, 1}^n (p - \stair(J)_i).
\end{equation}
To see this, we consider how a typical point $(a_1, \dots, a_n) \in (\ZZ/p\ZZ)^n$ which avoids the hyperplanes
in $\AAA_J$ is built. For any $j \not \in J$ we have $a_j \neq 0$. Furthermore, for all $1 \leq i \leq n$ the coordinate $a_i$ must be distinct
from all of the coordinates $a_j$ for $j < i$ with $j \notin J.$ The claimed product formula follows.
\end{proof}

We note that
the supersolvablity of $\AAA_J$ can also be checked by showing that the graph $\Gamma(J)$ on $\{0,1,\dots,n\}$ corresponding to $\AAA_J$ is chordal.

Combining Proposition \ref{prop:st-general} (3) and Lemma \ref{lem:aj-characteristic}, we get the following presentation of the algebra $\ST(\AAA_J,\ii)$.

\begin{theorem}
For any $J \subset [n]$,
we have $\ST(\AAA_J,\ii)=
S/((S_+^{\mathfrak S_n}):f_J)$.
\end{theorem}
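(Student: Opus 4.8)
The plan is to deduce this statement directly from the two structural results already in hand, namely Proposition~\ref{prop:st-general}(3) and Lemma~\ref{lem:aj-characteristic}; the only genuine task is a bookkeeping verification that the arrangement $\AAA_J$ was rigged so that its ``missing forms'' multiply to $f_J$. First I would record that, by Lemma~\ref{lem:aj-characteristic}, the arrangement $\AAA_J$ is supersolvable, hence free. This is exactly the hypothesis needed to invoke Proposition~\ref{prop:st-general}(3), which says that for a free subarrangement $\AAA \subseteq \AAA_{\widetilde{\Phi}^+}$ one has $\ii_\AAA = (S^{\symm_n}_+) : \widetilde{\beta}_\AAA$, where $\widetilde{\beta}_\AAA = \prod_{H_{i,j} \notin \AAA} \alpha_{i,j}$ ranges over the forms in $\widetilde{\Phi}^+$ whose hyperplanes are absent from $\AAA$.

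Next I would identify $\widetilde{\beta}_{\AAA_J}$ with $f_J$ by unwinding the definition of $\AAA_J$. Using the conventions $\AAA_{\widetilde{\Phi}^+} = \{ H_{i,j} \,:\, 0 \leq i < j \leq n \}$ and $\alpha_{0,j} = x_j$, the complement of $\AAA_J = \{ H_{x_j - x_i} \,:\, j \notin J,\, i > j \} \cup \{ H_{x_j} \,:\, j \notin J \}$ inside $\AAA_{\widetilde{\Phi}^+}$ consists precisely of the hyperplanes $H_{x_j - x_i}$ for $j \in J,\, i > j$ together with the coordinate hyperplanes $H_{x_j}$ for $j \in J$. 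Multiplying the corresponding linear forms gives
\begin{equation*}
\widetilde{\beta}_{\AAA_J} = \prod_{j \, \in \, J} x_j \left( \prod_{i \, = \, j+1}^n (x_j - x_i) \right) = f_J,
\end{equation*}
which is exactly how $\AAA_J$ was introduced. Substituting this into the conclusion of Proposition~\ref{prop:st-general}(3) yields $\ii_{\AAA_J} = (S^{\symm_n}_+) : f_J$, and therefore $\ST(\AAA_J,\ii) = S/\ii_{\AAA_J} = S/((S^{\symm_n}_+):f_J)$, as desired.

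I do not expect any real obstacle here: the substantive content — freeness of $\AAA_J$ via the Finite Fields Method and the colon-ideal identification via Lemma~\ref{lem:derivation_colon} and the cospanning Lemma~\ref{lem:cospan_containment} — has already been carried out in the cited results. The only point requiring care is the index bookkeeping in the identification $\widetilde{\beta}_{\AAA_J} = f_J$, in particular keeping the $x_0 \leftrightarrow x_0 - x_j$ convention straight and checking that passing to the complement of $\AAA_J$ inside $\AAA_{\widetilde{\Phi}^+}$ reproduces exactly the factors $\prod_{j \in J} x_j$ and $\prod_{j \in J}\prod_{i > j}(x_j - x_i)$ of $f_J$ with no extra or missing terms.
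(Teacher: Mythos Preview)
Your proposal is correct and matches the paper's approach exactly: the paper states this theorem as an immediate consequence of Proposition~\ref{prop:st-general}(3) and Lemma~\ref{lem:aj-characteristic}, having already introduced $\AAA_J$ precisely as the arrangement with $\widetilde{\beta}_{\AAA_J} = f_J$. Your write-up simply makes the bookkeeping explicit.
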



In the rest of this section,
as a quick application of the above theorem, we give an explicit generating set of the ideal $(S^{\symm_n}_+) : f_J$. 
Rhoades and Wilson showed \cite[Thm.\ 4.10]{RW}
that these ideals are the unit ideal when $1 \in J$, and cut out by a certain regular sequence 
$p_{J,1}, \dots, p_{J,n}$ when $1 \notin J$.
The $p_{J,i}$ appearing in \cite{RW} have a complicated and unusual definition involving partial derivatives
of complete homogeneous symmetric polynomials in partial variable sets.
We will give a simpler regular sequence generating $(S^{\symm_n}_+) : f_J$ by giving a basis of the derivation module of $\AAA_J$.

\begin{proposition}
\label{prop:aj-basis}
A free basis of $\Der(\AAA_J)$ is given by $\rho^J_1, \dots, \rho^J_n$ where 
\begin{equation}
\rho^J_i = 
\begin{cases}
\sum_{k \, = \, i}^n \left(  \prod_{\substack{j \, \not \in \, J \\ j \, < \, i}} (x_j - x_k) \right) x_k \cdot \partial_k &  i \not \in J, \\
\prod_{\substack{j \, \not \in \, J \\ j \, < \, i}} (x_j - x_i) \cdot \partial_i & i \in J.
\end{cases}
\end{equation}
\end{proposition}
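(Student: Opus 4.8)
The plan is to verify that $\rho^J_1, \dots, \rho^J_n$ satisfy the three hypotheses of Saito's Criterion, exactly as in the proof of Proposition~\ref{prop:derivation-module-basis}. First I would check the easy items: each $\rho^J_i$ is an $S$-linear combination of $\partial_i, \dots, \partial_n$ in which the coefficient of $\partial_i$ is a nonzero polynomial (equal to $\prod_{j\notin J, j<i}(x_j-x_i)$ times $x_i$ when $i\notin J$, or $\prod_{j\notin J,j<i}(x_j-x_i)$ when $i\in J$), so the $\rho^J_i$ are linearly independent over $S$; this gives condition (3). For condition (2), the degree of $\rho^J_i$ is $|\{j\notin J : j<i\}|$ when $i\in J$ and $|\{j\notin J:j<i\}|+1$ when $i\notin J$, which in either case equals $\stair(J)_i$; summing and invoking Lemma~\ref{lem:aj-characteristic} (together with Lemma~\ref{lem:free-arrangement-exponents}, or just directly counting the hyperplanes of $\AAA_J$) shows $\sum_i\deg\rho^J_i=|\AAA_J|$.

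The substance is condition (1): $\rho^J_i\in\Der(\AAA_J)$ for every $i$, i.e.\ $\alpha\mid\rho^J_i(\alpha)$ for every linear form $\alpha$ defining a hyperplane of $\AAA_J$, where the relevant $\alpha$ are $x_j$ and $x_j-x_k$ for $j\notin J$ and $j<k$. I would organize this by a case analysis on the relative position of $i$ and the indices appearing in $\alpha$, mirroring the three cases in the proof of Proposition~\ref{prop:derivation-module-basis}. When $i\in J$ the derivation is a monomial multiple of a single $\partial_i$, so $\rho^J_i(x_j)=0$ unless $j=i$ (but $i\in J$ forbids $x_i$ from defining a hyperplane), and $\rho^J_i(x_j-x_k)$ is nonzero only when $i\in\{j,k\}$; if $i=k$ then $j<i$, $j\notin J$, so the coefficient $\prod_{j'\notin J,j'<i}(x_{j'}-x_i)$ is divisible by $x_j-x_i$; if $i=j$ this would force $j=i\in J$, contradicting $j\notin J$, so this subcase is vacuous. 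When $i\notin J$, for $\alpha=x_j$ with $j\notin J$: if $j<i$ then $\rho^J_i(x_j)=0$; if $j\ge i$ then $\rho^J_i(x_j)=\big(\prod_{j'\notin J,j'<i}(x_{j'}-x_j)\big)x_j$, which is visibly divisible by $x_j$. For $\alpha=x_j-x_k$ with $j<k$, $j\notin J$: if $k<i$ then $\rho^J_i$ kills $\alpha$; if $j<i\le k$ then $\rho^J_i(x_j-x_k)=\big(\prod_{j'\notin J,j'<i}(x_{j'}-x_k)\big)x_k$, and since $j\notin J$ and $j<i$ the factor $x_j-x_k$ appears in the product; if $i\le j$ then $\rho^J_i(x_j-x_k)=\big(\prod(x_{j'}-x_j)\big)x_j-\big(\prod(x_{j'}-x_k)\big)x_k$, which vanishes upon setting $x_j=x_k$ and hence lies in $(x_j-x_k)$.

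The main obstacle, as in Proposition~\ref{prop:derivation-module-basis}, is bookkeeping rather than depth: one must make sure the ``southwest-type'' implication needed in the middle case actually holds for $\AAA_J$—concretely, that if $x_j-x_k$ defines a hyperplane of $\AAA_J$ (so $j\notin J$) and $j<i\le k$, then $x_j-x_k$ really does divide $\prod_{j'\notin J,\,j'<i}(x_{j'}-x_k)$, which is immediate since $j$ is among the indices $j'\notin J$ with $j'<i$. Once the case analysis is complete, Saito's Criterion yields that $\Der(\AAA_J)$ is free with the stated basis; applying $\ii$ then gives $\ii(\rho^J_i)$ as the corresponding regular sequence generating $\ii_{\AAA_J}=(S^{\symm_n}_+):f_J$ (using Proposition~\ref{prop:st-general}(3) and that $\widetilde\beta_{\AAA_J}=f_J$), which is the promised simpler generating set recorded afterward in Corollary~\ref{cor:generating-set}.
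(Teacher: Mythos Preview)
Your proposal is correct and follows essentially the same route as the paper's proof: both apply Saito's Criterion, verify degree sum and $S$-linear independence via triangularity, and establish $\rho^J_i\in\Der(\AAA_J)$ by the same case split (first on whether $i\in J$, then on the position of $i$ relative to the indices in $\alpha$). The only discrepancy is a harmless dropped minus sign in your middle subcase $j<i\le k$ (the paper records $-\prod_{j'\notin J,\,j'<i}(x_{j'}-x_k)\,x_k$), which of course does not affect the divisibility claim.
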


\begin{proof}
We apply Saito's Criterion. Adopting the abbreviation $\rho_i := \rho^J_i$,
observe that $\rho_i$ has degree $\stair(J)_i$, so that $\deg \rho_1 + \cdots +\deg \rho_n = |\AAA_J|$
by Lemma~\ref{lem:aj-characteristic}.
Since $\rho_i$ is an $S$-linear combination of $\partial_i, \partial_{i+1}, \dots, \partial_n$ where the coefficient
of $\partial_i$ is nonzero, we see that $\{ \rho_1, \dots, \rho_n \}$ is linearly independent over $S$.

We are reduced to showing that $\rho_i \in \Der(\AAA_J)$ for all $i$. To do this, we argue as follows.

{\bf Case 1:} $i \notin J$.

For any hyperplane $H_\alpha$ of $\AAA_J$, we need to show that $\alpha \mid \rho_i(\alpha)$ in $S$.
For $j \notin J$, we have 
\begin{equation}
\rho_i( x_j ) = \begin{cases}
\prod_{\substack{j' \, \notin \, J \\ j' \, < \, i}} (x_{j'} - x_j) x_j & i \leq j \\
0 & i > j.
\end{cases}
\end{equation}
In either case we have $x_j \mid \rho_i(x_j)$. The remaining hyperplanes of $\AAA_J$ have the form $x_j - x_\ell = 0$ for $j \not \in J$ and $j < \ell$.
Applying $\rho_i$ to the linear form $x_j - x_\ell$ yields
\begin{equation}
\rho_i (x_j - x_\ell) = \begin{cases}
\prod_{\substack{j' \, \notin \, J \\ j' \, < \, i}} (x_{j'} - x_j) x_j -
\prod_{\substack{j' \, \notin \, J \\ j' \, < \, i}} (x_{j'} - x_\ell) x_\ell & i \leq j < \ell, \\
-\prod_{\substack{j' \, \notin \, J \\ j' \, < \, i}} (x_{j'} - x_\ell) x_\ell & j < i \leq \ell, \\
0 & j < \ell < i.
\end{cases}
\end{equation}
In the first branch, setting $x_j = x_\ell$ makes $\rho_i(x_j - x_\ell)$ vanish so that $(x_ j - x_\ell) \mid \rho_i(x_j - x_\ell)$.
When $j < i \leq \ell$ as in the second branch, the factor $(x_j - x_\ell)$ appears in the product so that 
$(x_ j - x_\ell) \mid \rho_i(x_j - x_\ell)$.  The third branch yields $(x_ j - x_\ell) \mid \rho_i(x_j - x_\ell)$ when $j <\ell < i$, as well.
This implies that $\rho_i \in \Der(\AAA_J)$.

{\bf Case 2:} $i \in J$.

If $j \notin J$, applying $\rho_i$ to the linear form $x_j$ yields 0 because $\partial_i(x_j) = 0$.
If $j \notin J$ and $j < \ell$, applying $\rho_i$ to $x_j - x_\ell$ yields
\begin{equation}
\rho_i(x_j - x_\ell) = \begin{cases}
\prod_{\substack{j' \, \notin \, J \\ j' \, < \, i}} (x_{j'} - x_i) & \ell = i, \\
0 & \ell \neq i.
\end{cases}
\end{equation}
Since $x_j - x_i = x_j - x_\ell$ appears in the product in the first branch, we obtain the desired divisibility
$(x_j - x_\ell) \mid \rho_i(x_j - x_\ell)$ and conclude that $\rho_i \in \Der(\AAA_J)$.
\end{proof}

For $J \subseteq [n]$ and $1 \leq i \leq n$, define a polynomial $g_{J,i} \in S$ by 
\begin{equation}
g_{J,i} := \ii(\rho^J_i)=\begin{cases}
\sum_{k \, = \, i}^n \left( \prod_{\substack{j \, \notin \, J, \\ j \, < \, i}} (x_j - x_k) \right) x_k & i \notin J, \\
 \prod_{\substack{j \, \notin \, J \\ j \, < \, i}} (x_j - x_i) & i \in J.
\end{cases}
\end{equation}

\begin{corollary}
\label{cor:generating-set}
Let $J \subseteq [n]$.  If $1 \in J$, the ideal $(S^{\symm_n}_+) : f_J$ is the unit ideal.
If $1 \notin J$, the ideal $(S^{\symm_n}_+) : f_J$ is generated by the regular sequence 
$g_{J,1}, \dots, g_{J,n}$.
\end{corollary}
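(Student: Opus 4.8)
The plan is to read the corollary off the results already assembled in this section, with almost no extra work. First I would record that the arrangement $\AAA_J$ was defined precisely so that $\widetilde{\beta}_{\AAA_J} = f_J$; since $\AAA_J$ is free (Lemma~\ref{lem:aj-characteristic}), Proposition~\ref{prop:st-general}(3) gives $\ii_{\AAA_J} = (S^{\symm_n}_+):f_J$ as ideals in $S$. Next I would invoke Proposition~\ref{prop:aj-basis}, which presents $\Der(\AAA_J)$ as the free $S$-module on $\rho^J_1,\dots,\rho^J_n$. Because $\ii\colon \Der(S)\to S$ is an $S$-module map, the image of this submodule is the ideal generated by the images of the generators, i.e. $\ii_{\AAA_J} = \bigl(\ii(\rho^J_1),\dots,\ii(\rho^J_n)\bigr) = (g_{J,1},\dots,g_{J,n})$ by the very definition of the $g_{J,i}$. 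Combining the two displays yields $(S^{\symm_n}_+):f_J = (g_{J,1},\dots,g_{J,n})$, which already establishes the stated generating set.

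It then remains to treat the two cases separately. If $1\in J$, then in the formula for $g_{J,1}$ we are in the branch $i=1\in J$, and the product $\prod_{j\notin J,\,j<1}(x_j-x_1)$ is empty; hence $g_{J,1}=1$, so $(g_{J,1},\dots,g_{J,n})$ is the unit ideal and $(S^{\symm_n}_+):f_J=S$. (This matches the fact that $\AAA_J$ fails to be essential when $1\in J$, via Proposition~\ref{prop:st-general}(2).) If $1\notin J$, then $\stair(J)_1=1$ and $\stair(J)$ is nondecreasing, so each $g_{J,i}$ is homogeneous of strictly positive degree $\stair(J)_i$; in particular the ideal $(g_{J,1},\dots,g_{J,n})$ lies in the irrelevant maximal ideal $(x_1,\dots,x_n)$ and is proper. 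Since $(S^{\symm_n}_+)\subseteq (S^{\symm_n}_+):f_J=(g_{J,1},\dots,g_{J,n})$, the quotient $S/(g_{J,1},\dots,g_{J,n})$ is a quotient of the classical coinvariant algebra $S/(S^{\symm_n}_+)$ and is therefore finite-dimensional over $\KK$, i.e. Artinian. By the standard fact recalled in Section~\ref{sec:commutative-algebra}, namely that $n$ homogeneous polynomials of positive degree form a regular sequence if and only if their quotient is Artinian, the sequence $g_{J,1},\dots,g_{J,n}$ is regular.

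I do not expect a genuine obstacle here: the identification $\ii_{\AAA_J}=(S^{\symm_n}_+):f_J$, the explicit free basis of $\Der(\AAA_J)$, and the Artinian-implies-regular-sequence criterion are all already in hand, so the argument is just stitching them together. The only points that demand a moment of care are the degenerate case $1\in J$ (noticing the empty product collapses $g_{J,1}$ to $1$) and, when $1\notin J$, confirming that the $g_{J,i}$ genuinely have positive degree so that the Artinian quotient is nonzero rather than zero; both checks are immediate.
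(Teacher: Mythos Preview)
Your proposal is correct and follows essentially the same route as the paper: identify $(S^{\symm_n}_+):f_J$ with $\ii_{\AAA_J}$ via Proposition~\ref{prop:st-general}(3), generate the latter by $g_{J,i}=\ii(\rho^J_i)$ using the free basis of Proposition~\ref{prop:aj-basis}, and then split into cases on whether $1\in J$. The only cosmetic difference is that for the case $1\in J$ the paper cites \cite{RW} (or Lemma~\ref{lem:cospan_containment}) rather than directly observing $g_{J,1}=1$, and for the regularity when $1\notin J$ the paper leaves the Artinian-implies-regular step implicit (it is packaged in Lemma~\ref{lem:st-hilbert-series}) whereas you spell it out; both are the same argument.
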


\begin{proof}
It was established in \cite{RW} that $(S^{\symm_n}_+) : f_J$ is the unit ideal if $1 \in J$.
(This also follows from Lemma \ref{lem:cospan_containment}.)
Suppose $1 \not \in J$.
Then $\rho_1^{J},\dots,\rho_n^{J}$ have positive degrees.
Up to a nonzero scalar we have
\begin{equation}
f_J = Q(\AAA_{\widetilde{\Phi}^+}) / Q(\AAA_{J}).
\end{equation}
By Proposition~\ref{prop:aj-basis} and  
Proposition~\ref{prop:st-general} (3) we see that $\ii_{\AAA_{J}}=(S^{\symm_n}_+) : f_J$ is generated by the regular 
sequence $\ii(\rho^J_1), \dots, \ii(\rho^J_n) \in S$.
\end{proof}

\section{Monomial bases of Solomon--Terao algebras}
\label{Monomial}

Our goal is to show that the algebra $S/((S^{\mathfrak S_n}_+):f_J)=\ST(\AAA_J,\ii)$ has the expected monomial basis $\mathcal M(J)$ given in the introduction. To do this, we first give a basis of Solomon--Terao algebras $\ST(\AAA,\ii)$ for southwest arrangements $\AAA$.
The key lemma to do this is the following short exact sequence of Solomon--Terao algebras $\ST(-, \ii)$ induced by the short exact sequence of derivation modules in Theorem~\ref{thm:exact-triple}.


\begin{lemma}
\label{lem:st-short-exact}
Let $\AAA$ be a subarrangement of $\AAA_{\widetilde{\Phi}^+}$ and $H = H_\alpha \in \AAA$.
Assume that the triple $(\AAA \setminus H, \AAA, \AAA^H)$ satisfies the conditions of 
Theorem~\ref{thm:addition}.
\begin{enumerate}
\item We have $\ST(\AAA^H,\ii) = S/(\ii_\AAA + (\alpha))$.
\item We have a short exact sequence
$$
0 \longrightarrow \ST(\AAA \setminus H,\ii) \xrightarrow{ \, \, \times \alpha \, \, } \ST(\AAA,\ii) \longrightarrow \ST(\AAA^H,\ii) \longrightarrow 0
$$
where the second map is the  canonical projection.
\end{enumerate}
\end{lemma}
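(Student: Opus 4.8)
The plan is to deduce both statements from the short exact sequence of derivation modules in Theorem~\ref{thm:exact-triple} by applying the map $\ii$, carefully tracking images. Write $\bar{S} := S/(\alpha)$, and recall that Theorem~\ref{thm:exact-triple} gives the short exact sequence
\begin{equation*}
0 \longrightarrow \Der(\AAA \setminus H) \xrightarrow{ \, \times \alpha \, } \Der(\AAA) \xrightarrow{ \, \eta_H \, } \Der(\AAA^H) \longrightarrow 0.
\end{equation*}
Here $\Der(\AAA^H)$ is a module over $\bar{S}$, sitting inside $\bar{S} \otimes_\KK H$, and $\eta_H$ is the restriction-of-vector-fields map $\psi \mapsto \overline{\psi}$. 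The map $\ii : \Der(S) \to S$ restricts to give $\ii_{\AAA \setminus H}, \ii_\AAA \subseteq S$, and composing with reduction mod $\alpha$ we get a map $\overline{\ii} : \Der(S) / \alpha \Der(S) \to \bar{S}$ sending $\overline{\partial_i} \mapsto 1$; its restriction to $\Der(\AAA^H) \subseteq \bar{S} \otimes_\KK H$ should be identified with the analogously-defined map for the restricted arrangement (after matching up the coordinate bases, as in Proposition~\ref{prop:derivation-module-basis}), so that its image is $\ii_{\AAA^H} \subseteq \bar{S}$.

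For Item (1): I would show $\ii_{\AAA^H} = (\ii_\AAA + (\alpha))/(\alpha)$ inside $\bar{S}$, which immediately gives $\ST(\AAA^H,\ii) = \bar{S}/\ii_{\AAA^H} = S/(\ii_\AAA + (\alpha))$. The inclusion $\ii_{\AAA^H} \supseteq$ the image of $\ii_\AAA$ is clear from surjectivity of $\eta_H$ and the compatibility $\overline{\ii(\psi)} = \overline{\ii}(\overline{\psi})$: any generator $\ii(\eta_H(\rho^\AAA_j))$ of $\ii_{\AAA^H}$ equals $\ii(\rho^\AAA_j)$ reduced mod $\alpha$. Conversely every generator of $\ii_{\AAA^H}$ is of this form since $\eta_H$ is onto, so the two ideals of $\bar S$ agree. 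The one genuinely subtle point is justifying the identification of $\overline{\ii}|_{\Der(\AAA^H)}$ with the intrinsic $\ii$-map of $\AAA^H$: this is where one uses that, after the linear change of coordinates making $\alpha = x_1$ (as in the footnote to Theorem~\ref{thm:exact-triple}), restriction sends $\partial_i \mapsto \partial_i$ for $i \geq 2$, and the explicit southwest bases of Proposition~\ref{prop:derivation-module-basis} are compatible with this under the basis convention $\ee_1,\dots,\ee_{p-1},\ee_{p+1},\dots,\ee_n$ for $H_p$.

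For Item (2): apply the right-exact functor $\ii$ (more precisely, tensor the derivation-module sequence with $S$ over itself via $\ii$, or simply chase images) to obtain a commutative diagram with exact rows, the top row being the sequence from Theorem~\ref{thm:exact-triple} and the bottom row
\begin{equation*}
0 \longrightarrow \ST(\AAA \setminus H,\ii) \xrightarrow{ \, \times \alpha \, } \ST(\AAA,\ii) \longrightarrow \ST(\AAA^H,\ii) \longrightarrow 0,
\end{equation*}
where the right vertical map $\overline{\ii}$ is the one used in Item (1) and the outer verticals are the tautological surjections $S/\ii_{\AAA\setminus H} \twoheadleftarrow \Der(\AAA\setminus H)$ etc. Exactness on the right and in the middle is formal: surjectivity onto $\ST(\AAA^H,\ii)$ follows from Item (1) (the canonical projection $S/\ii_\AAA \to S/(\ii_\AAA+(\alpha))$), and exactness at $\ST(\AAA,\ii)$ — that the kernel of this projection is $\alpha \cdot \ST(\AAA,\ii)$ — is immediate since that kernel is $(\ii_\AAA + (\alpha))/\ii_\AAA = \alpha S/(\alpha S \cap \ii_\AAA + \ii_\AAA)$, which is exactly the image of multiplication by $\alpha$. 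The real content is injectivity of $\times \alpha : \ST(\AAA\setminus H,\ii) \to \ST(\AAA,\ii)$, i.e. that $\alpha$ is a nonzerodivisor on $S/\ii_{\AAA\setminus H}$ — equivalently $\ii_\AAA : \alpha = \ii_{\AAA\setminus H}$. I expect this to be the main obstacle, and I would handle it exactly as in Lemma~\ref{lem:derivation_colon}: since the triple satisfies the Addition-Deletion hypotheses, $\AAA, \AAA\setminus H, \AAA^H$ are all free; if $\ST(\AAA,\ii)$ is Artinian then Lemma~\ref{lem:st-hilbert-series} makes $\ST(\AAA,\ii)$ and $\ST(\AAA\setminus H,\ii)$ Poincar\'e duality algebras whose socle degrees differ by $1 = \deg\alpha$, and $\alpha \notin \ii_\AAA$ (degree reasons, or nonessentiality bookkeeping), so Lemma~\ref{lem:colon-ideal-generation} forces $\ii_{\AAA\setminus H} = \ii_\AAA : \alpha$; the degenerate cases where the algebras vanish or are infinite-dimensional are checked directly using Proposition~\ref{prop:st-general}(2) and the explicit bases. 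Alternatively, and more cleanly, injectivity of $\times\alpha$ on the Solomon-Terao algebras is inherited directly from injectivity of $\times\alpha$ on the derivation modules in Theorem~\ref{thm:exact-triple} together with a diagram chase once one knows the right-hand square commutes and $\overline{\ii}$ is surjective — a snake-lemma argument — so I would present whichever of these two routes is shortest in context.
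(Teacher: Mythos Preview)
Your approach to Item (1) is essentially the paper's: both prove $\ii_{\AAA^H} = (\ii_\AAA + (\alpha))/(\alpha)$ via the commutative square with $\eta_H$ on top and $\pi$ on the bottom. Your references to the southwest basis $\rho^\AAA_j$ of Proposition~\ref{prop:derivation-module-basis} are unnecessary, since the lemma does not assume $\AAA$ is southwest; the commutativity check only uses $\alpha \mid \psi(\alpha)$ for $\psi \in \Der(\AAA)$.

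For Item (2), both you and the paper start from the tautological exact sequence $0 \to S/(\ii_\AAA : \alpha) \xrightarrow{\times\alpha} S/\ii_\AAA \to S/(\ii_\AAA + (\alpha)) \to 0$ and use Item (1) for the right-hand term; the only issue is identifying $\ii_\AAA : \alpha = \ii_{\AAA \setminus H}$. Here the paper's route is different and cleaner: it simply invokes Proposition~\ref{prop:st-general}(3), which says $\ii_\BBB = (S^{\symm_n}_+) : \widetilde{\beta}_\BBB$ for any free subarrangement $\BBB \subseteq \AAA_{\widetilde{\Phi}^+}$. Since $\widetilde{\beta}_{\AAA \setminus H} = \alpha \cdot \widetilde{\beta}_\AAA$, the colon-ideal identity $\aaa : fg = (\aaa : f) : g$ gives $\ii_{\AAA \setminus H} = \ii_\AAA : \alpha$ in one line. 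This is where the ambient hypothesis $\AAA \subseteq \AAA_{\widetilde{\Phi}^+}$ is actually used (via the cospanning Lemma~\ref{lem:cospan_containment} inside Proposition~\ref{prop:st-general}). Your direct PDA argument via Lemma~\ref{lem:colon-ideal-generation} also works, but it is essentially re-proving the special case of Lemma~\ref{lem:derivation_colon} already packaged into Proposition~\ref{prop:st-general}(3), and your justification ``$\alpha \notin \ii_\AAA$ (degree reasons)'' is not right as stated: $\ii$ has degree $0$, so $\ii_\AAA$ can contain linear forms. One needs a socle-degree comparison (or the essentiality of $\AAA \setminus H$ forcing $e_n \geq 2$) to rule this out.

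Finally, your snake-lemma alternative does not deliver injectivity on the left. The snake lemma applied to the square with rows $\Der(\AAA \setminus H) \to \Der(\AAA) \to \Der(\AAA^H) \to 0$ and $0 \to S \to S \to \bar S \to 0$ gives exactness at the middle and right of the cokernel sequence, but injectivity of $\times\alpha$ on the Solomon--Terao algebras would require the connecting map from $\ker(\overline{\ii}|_{\Der(\AAA^H)})$ to vanish, which is not automatic.
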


\begin{proof}
Let $\eta_H: \Der(\AAA) \twoheadrightarrow \Der(\AAA^H)$ be the surjection appearing in Theorem~\ref{thm:exact-triple}.
Write $S' := S/( \alpha )$ and let $\pi: S \twoheadrightarrow S'$ be the canonical surjection.
We have a commutative diagram
\begin{center}
\begin{tikzpicture}[scale = 1]

\node (A) at (0,0) {$S$};

\node (B) at (4,0) {$S'$};

\node (C) at (0,2) {$\Der(\AAA)$};

\node (D) at (4,2) {$\Der(\AAA^H)$};

\draw [->] (A) -- (B) node[midway, below] {$\pi$};

\draw [->] (C) -- (D) node[midway, above] {$\eta_H$};

\draw [->] (C) -- (A) node[midway, left] {$\ii$};

\draw [->] (D) -- (B) node[midway, right]{$\ii$};

\end{tikzpicture}
\end{center}
where the horizontal maps are surjections. We conclude that
\begin{equation}
\ii_{\AAA^H} = \ii(\Der(\AAA^H)) = (\pi \circ \ii) (\Der(\AAA)) = (\ii_\AAA + ( \alpha )) / (\alpha)
\end{equation}
so that
\begin{equation}
\ST(\AAA^H,\ii) = S'/\ii_{\AAA^H} = S / (\ii_\AAA + (\alpha)).
\end{equation}
This proves (1).

For (2), the definition of colon ideals gives the short exact sequence
\begin{equation}
0 \longrightarrow S/ (\ii_\AAA : (\alpha)) \xrightarrow{ \, \, \times \alpha \, \, } S / \ii_\AAA \longrightarrow S / (\ii_\AAA + (\alpha)) \longrightarrow 0
\end{equation}
where the second map is the canonical projection.
The middle term in this short exact sequence is $\ST(\AAA,\ii)$.
By (1), the right term is $\ST(\AAA^H,\ii)$.
The left term is $\ST(\AAA \setminus H, \ii)$ by Lemma~\ref{lem:derivation_colon}.
\end{proof}

Let $\AAA \subseteq \AAA_{\widetilde{\Phi}^+}$ be an essential southwest arrangement 
with $h$-sequence $h(\AAA) = (h_1, \dots, h_n)$.
Since $\ii$ is homogeneous of degree 0, 
Lemma~\ref{lem:st-hilbert-series} and
Proposition~\ref{prop:derivation-module-basis} imply that $\ST(\AAA,\ii)$ has Hilbert series
\begin{equation}
\Hilb( \ST(\AAA,\ii); q) = \prod_{i \, = \, 1}^n \frac{1 - q^{h_i}}{1-q}.
\end{equation}
This says that the set 
$ \{ x_1^{a_1} \cdots x_n^{a_n} \,:\, a_i < h_i \text{ for all $i$} \}$ of monomials in $S$
have the correct multiset of degrees to descend to a basis of $\ST(\AAA,\ii)$.
The main theorem of this section establishes this fact using short exact sequences.
The authors are unaware of a straightening or Gr\"obner-type proof of this result.

\begin{theorem}
\label{thm:sw-basis}
Let $\AAA$ be an essential southwest arrangement in $\KK^n$ with
$h(\AAA) = (h_1, \dots, h_n)$. The set 
\[ \{ x_1^{a_1} \cdots x_n^{a_n} \,:\, a_i < h_i \text{ for all $i$} \} \]
descends to a $\KK$-basis of $\ST(\AAA,\ii)$.
\end{theorem}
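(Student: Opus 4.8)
The plan is to induct on the number of hyperplanes of $\AAA$, using the recursive structure from Lemma~\ref{lem:sw-recursion} and the short exact sequence from Lemma~\ref{lem:st-short-exact}. The base case is the empty essential arrangement, which forces $n = 0$ and is trivial, or one can take as base case any arrangement with $h$-sequence $(1, \dots, 1)$, for which $\ST(\AAA,\ii) \cong \KK$ and the monomial set is $\{1\}$. For the inductive step, since $\AAA$ is essential it contains at least one coordinate hyperplane; set $p := \max\{k : H_k \in \AAA\}$ and $\alpha := x_p$. By Lemma~\ref{lem:sw-addition} the triple $(\AAA, \AAA \setminus H_p, \AAA^{(p)})$ satisfies the hypotheses of Terao's Addition-Deletion Theorem, so Lemma~\ref{lem:st-short-exact} gives the short exact sequence
\begin{equation*}
0 \longrightarrow \ST(\AAA \setminus H_p, \ii) \xrightarrow{\ \times x_p\ } \ST(\AAA, \ii) \longrightarrow \ST(\AAA^{(p)}, \ii) \longrightarrow 0.
\end{equation*}

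By Lemma~\ref{lem:sw-recursion}, the deletion $\AAA \setminus H_p$ is southwest with $h$-sequence $(h_1, \dots, h_{p-1}, h_p - 1, h_{p+1}, \dots, h_n)$, and the restriction $\AAA^{(p)}$ is a southwest arrangement in the coordinate hyperplane $H_p$ (with variables $x_1, \dots, x_{p-1}, x_{p+1}, \dots, x_n$) with $h$-sequence $(h_1, \dots, h_{p-1}, h_{p+1}, \dots, h_n)$. Both have strictly fewer hyperplanes than $\AAA$, so the inductive hypothesis applies to each — with one caveat I will address below regarding essentiality. Applying induction, $\ST(\AAA \setminus H_p, \ii)$ has the monomial basis $\{x_1^{a_1}\cdots x_n^{a_n} : a_i < h_i \ (i \neq p),\ a_p < h_p - 1\}$, and $\ST(\AAA^{(p)}, \ii)$ has the monomial basis $\{x_1^{a_1}\cdots \widehat{x_p^{a_p}} \cdots x_n^{a_n} : a_i < h_i \ (i \neq p)\}$ in the variables other than $x_p$. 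Now I would argue: multiplying the first basis by $x_p$ lands in $\ST(\AAA,\ii)$ and produces exactly the monomials with $1 \le a_p \le h_p - 1$ and $a_i < h_i$ for $i \neq p$; lifting the second basis along any section produces (representatives of) the monomials with $a_p = 0$ and $a_i < h_i$ for $i \neq p$. By exactness of the sequence, the union of these two families descends to a basis of $\ST(\AAA,\ii)$, and this union is precisely $\{x_1^{a_1}\cdots x_n^{a_n} : a_i < h_i\ \text{for all } i\}$, using that the second map of the sequence is the canonical projection $S \twoheadrightarrow S/(\ii_\AAA + (x_p))$, so the monomials not involving $x_p$ really do map to the claimed basis of $\ST(\AAA^{(p)},\ii)$.

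The main obstacle is the essentiality hypothesis in the inductive step: $\AAA \setminus H_p$ or $\AAA^{(p)}$ need not be essential, and Theorem~\ref{thm:sw-basis} as stated assumes essentiality. I would handle this by either (i) strengthening the induction to drop the essentiality hypothesis — in which case when the arrangement is non-essential one has $\ii_\AAA = S$ by Proposition~\ref{prop:st-general}(2), some $h_i = 0$ (one should check the $h$-sequence of a non-essential southwest arrangement has a zero entry wherever a variable is ``free''), the monomial set is empty, and $\ST = 0$, making the statement vacuously true and the short exact sequence still valid — or (ii) observing that a southwest arrangement is essential iff $h_i \ge 1$ for all $i$, and tracking this condition through deletion and restriction. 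Route (i) is cleanest: restate and prove the theorem for all southwest arrangements, essential or not, so the induction closes without case analysis on essentiality, with the Hilbert series computation of Lemma~\ref{lem:st-hilbert-series} (valid since $\AAA$ is free by Proposition~\ref{prop:sw-free}) confirming the dimension count at each stage and the exactness of the sequence upgrading the spanning statement to a basis statement. A secondary minor point to verify is that the variable relabeling implicit in ``southwest arrangement in $H_p$'' matches the indexing conventions so that the restricted monomial basis is indeed indexed by the $h$-sequence with the $p$-th entry deleted; this is exactly the content of Lemma~\ref{lem:sw-recursion}(2) and its proof.
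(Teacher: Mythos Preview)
Your proposal is correct and follows essentially the same approach as the paper: induct via the short exact sequence of Lemma~\ref{lem:st-short-exact} applied at $H_p$ with $p = \max\{k : H_k \in \AAA\}$, using Lemma~\ref{lem:sw-recursion} to control the $h$-sequences of the deletion and restriction. The only cosmetic difference is that where you propose absorbing the non-essential case into a strengthened inductive statement (your route~(i)), the paper instead performs an explicit case split on whether $h_p = 1$ or $h_p > 1$: when $h_p = 1$ the deletion $\AAA \setminus H_p$ is non-essential, so $\ST(\AAA \setminus H_p,\ii) = 0$ by Proposition~\ref{prop:st-general}(2) and the sequence collapses to an isomorphism $\ST(\AAA,\ii) \cong \ST(\AAA^{(p)},\ii)$, while for $h_p > 1$ both pieces remain essential and the induction proceeds directly.
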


\begin{proof}
Since $\AAA$ is essential we have $|\AAA| \geq n$.  We use double induction on $n$ and $|\AAA|$.
If $n = 1$ the result is an easy computation.
If $|\AAA| = n$, the ideal $\ii_\AAA$ is generated by $n$ linearly independent homogeneous polynomials of degree 1
so that $\ST(\AAA,\ii) = S/(x_1, \dots, x_n)$ and $h(\AAA) = (1, \dots, 1)$ so the result follows.

Now assume $n > 1$ and $|\AAA| > n$.  Since $\AAA$ is an essential southwest arrangement, 
it contains at least one coordinate hyperplane.
Let $p := \max \{ k \,:\, H_k \in \AAA \}$.
Lemma~\ref{lem:sw-addition} and Lemma~\ref{lem:st-short-exact} (2) give a short exact sequence
\begin{equation}
\label{eq:key-short-exact-st}
0 \longrightarrow \ST(\AAA \setminus H_p,\ii) \xrightarrow{ \, \, \times x_p \, \, } \ST(\AAA,\ii) \longrightarrow \ST(\AAA^{(p)},\ii) \longrightarrow 0
\end{equation}
where $\AAA^{(p)} := \AAA^{H_p}$.
We inductively assume that the theorem holds for the arrangements $\AAA \setminus H_p$ and $\AAA^{(p)},$ which are themselves southwest arrangements by Lemma~\ref{lem:sw-recursion}.
Since $H_p$ contributes to $h_p$, we have $h_p \geq 1$. Our argument splits into cases depending on the value of $h_p$.

{\bf Case 1:} {\em We have $h_p > 1$.}

Lemma~\ref{lem:sw-recursion} states that 
\begin{center}
$h(\AAA \setminus H_p) = (h_1, \dots, h_{p-1}, h_p - 1, h_{p+1}, \dots, h_n)$ and 
$h(\AAA^{(p)}) = (h_1, \dots, h_{p-1}, h_{p+1}, \dots, h_n)$.
\end{center}
The result for $\ST(\AAA,\ii)$ follows from the form of the short exact sequence \eqref{eq:key-short-exact-st} and induction.

{\bf Case 2:} {\em We have $h_p = 1$.}

In this case the deleted arrangement $\AAA \setminus H_p$ is not essential.
Proposition~\ref{prop:st-general} (2) implies that $\ST(\AAA \setminus H_p,\ii) = 0$, so the map
$ \ST(\AAA,\ii) \xrightarrow{ \, \, \sim \, \, } \ST(\AAA^{(p)},\ii)$ appearing in \eqref{eq:key-short-exact-st} induced by setting $x_p = 0$ is an isomorphism.
The result for $\ST(\AAA,\ii)$ follows from induction.
\end{proof}

\begin{remark}
Theorem \ref{thm:sw-basis} may be considered as a generalization of the main result in \cite{HHMPT}, which gave a monomial basis for the cohomology ring of regular nilpotent Hessenberg varieties in type $A$. Indeed, these cohomology rings are known to be equal to Solomon--Terao algebras $\ST(\AAA,\aaa)$ of type $A$ ideal arrangements $\AAA$ (that is, subarrangements of $\AAA_{\Phi^+}$ which are both southwest-closed and southeast-closed).
But by setting $\widetilde \AAA=\AAA \cup\{H_{x_1},\dots,H_{x_n}\}$,
this algebra also equals $\ST(\widetilde \AAA,\ii)$ by Proposition \ref{prop:st-general} (3),
so Theorem \ref{thm:sw-basis} also gives a monomial basis for those rings.
\end{remark}

\section{Artin basis of $SR$}
\label{Artin}

In this section we apply
Theorem~\ref{thm:sw-basis} to the superspace coinvariant ring $SR$.
For $J \subseteq [n]$, recall that $\MMM(J)$ denotes the following set of monomials in $S$:
\[ \MMM(J) := \{ x_1^{a_1} \cdots x_n^{a_n} \,:\, a_i < \stair(J)_i \text{ for all $i$} \} \]
and $\MMM$ is the set of superspace monomials in $\Omega$:
\[ \MMM := \bigsqcup_{J \, \subseteq \, [n]} \MMM(J) \cdot \theta_J. \]
The following result proves a conjecture of Rhoades and Wilson \cite[Conj. 5.5]{RW}.

\begin{theorem}
\label{thm:ss-J}
For any subset $J \subseteq [n]$, the family $\MMM(J)$ descends to a basis of $S/( (S^{\symm_n}_+) : f_J)$.
\end{theorem}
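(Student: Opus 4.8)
The plan is to realize $S/((S^{\symm_n}_+):f_J)$ as a colon ideal inside the Solomon--Terao algebra of a southwest arrangement whose monomial basis is already supplied by Theorem~\ref{thm:sw-basis}, and then to track $\MMM(J)$ through that colon. The case $1\in J$ is immediate: then $\stair(J)_1=0$, so $\MMM(J)=\varnothing$, while no hyperplane of $\AAA_J$ involves $x_1$, so $\AAA_J$ is inessential and $S/((S^{\symm_n}_+):f_J)=\ST(\AAA_J,\ii)=0$ by Proposition~\ref{prop:st-general}(2). So assume $1\notin J$, whence $\stair(J)_i\geq\stair(J)_1=1$ for all $i$. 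Introduce
\[
\widetilde{\AAA}_J:=\AAA_{\widetilde{\Phi}^+}\setminus\{H_{x_j-x_i}\,:\,j\in J,\ j<i\leq n\},
\]
the arrangement obtained from $\AAA_J$ by re-adjoining the coordinate hyperplanes $H_{x_j}$ with $j\in J$; thus $\AAA_J\subseteq\widetilde{\AAA}_J$ and, up to a scalar, $Q(\widetilde{\AAA}_J)/Q(\AAA_J)=f:=\prod_{j\in J}x_j$. First I would check from Definition~\ref{defn:sw-arrangement} that $\widetilde{\AAA}_J$ is a southwest arrangement (the deleted hyperplanes are exactly the $H_{i,j}$ with $i\in J$, so the southwest closure condition survives) and that it is essential (it contains every coordinate hyperplane), and then compute its $h$-sequence: counting black dots along the $i$-th diagonal gives
\[
h_i(\widetilde{\AAA}_J)=1+\#\{k<i\,:\,k\notin J\}=\stair(J)_i+[i\in J],
\]
where $[i\in J]$ equals $1$ if $i\in J$ and $0$ otherwise. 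Theorem~\ref{thm:sw-basis} then says that
\[
\BBB:=\{x_1^{b_1}\cdots x_n^{b_n}\,:\,b_i<\stair(J)_i+[i\in J]\text{ for all }i\}
\]
descends to a $\KK$-basis of $\ST(\widetilde{\AAA}_J,\ii)$; in particular this algebra is Artinian, and $f=x_1^{[1\in J]}\cdots x_n^{[n\in J]}$ lies in $\BBB$ (using $\stair(J)_i\geq1$), so $f\notin\ii_{\widetilde{\AAA}_J}$.

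With this in place, I would apply Lemma~\ref{lem:derivation_colon} to the pair $\AAA_J\subseteq\widetilde{\AAA}_J$: both arrangements are free (by Lemma~\ref{lem:aj-characteristic} and Proposition~\ref{prop:sw-free}), $\ST(\widetilde{\AAA}_J,\ii)$ is Artinian, and $f=Q(\widetilde{\AAA}_J)/Q(\AAA_J)\notin\ii_{\widetilde{\AAA}_J}$, so the lemma yields
\[
\ii_{\AAA_J}=\ii_{\widetilde{\AAA}_J}:f.
\]
Recalling that $\ST(\AAA_J,\ii)=S/((S^{\symm_n}_+):f_J)$, the colon exact sequence recorded in Section~\ref{sec:commutative-algebra} then furnishes a $\KK$-linear injection $S/((S^{\symm_n}_+):f_J)\xrightarrow{\ \times f\ }\ST(\widetilde{\AAA}_J,\ii)$, $\overline{g}\mapsto\overline{fg}$.

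To finish, observe that for $m=x_1^{a_1}\cdots x_n^{a_n}\in\MMM(J)$ (so $a_i<\stair(J)_i$), the monomial $f\cdot m$ has exponent vector $(a_i+[i\in J])_i$, which satisfies $a_i+[i\in J]<\stair(J)_i+[i\in J]$; hence $f\cdot m\in\BBB$, and $m\mapsto f\cdot m$ is injective. Since $\BBB$ is $\KK$-linearly independent in $\ST(\widetilde{\AAA}_J,\ii)$ and the map $\times f$ above is injective, $\MMM(J)$ is $\KK$-linearly independent in $S/((S^{\symm_n}_+):f_J)$. Finally, $\AAA_J$ is free with exponents $\stair(J)_1,\dots,\stair(J)_n$ (Lemma~\ref{lem:aj-characteristic}) and $\ST(\AAA_J,\ii)=S/(\ii_{\widetilde{\AAA}_J}:f)$ is nonzero and finite-dimensional (being a nonzero quotient of the finite-dimensional $\ST(\widetilde{\AAA}_J,\ii)$), so Lemma~\ref{lem:st-hilbert-series}(3) gives $\dim_\KK S/((S^{\symm_n}_+):f_J)=\prod_{i=1}^n\stair(J)_i=|\MMM(J)|$, and linear independence upgrades to a basis.

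I expect the only genuine work to be the bookkeeping in the first paragraph: correctly pinning down $\widetilde{\AAA}_J$, verifying it is southwest, and computing its $h$-sequence so that it matches $\stair(J)$ after the shift by $[i\in J]$ --- equivalently, so that multiplication by $\prod_{j\in J}x_j$ embeds $\MMM(J)$ into the Theorem~\ref{thm:sw-basis} basis of $\ST(\widetilde{\AAA}_J,\ii)$. Once that is arranged, the conclusion is the formal colon-ideal machinery of Lemma~\ref{lem:derivation_colon} together with a one-line Hilbert-series dimension count.
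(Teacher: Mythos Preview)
Your proof is correct and follows essentially the same route as the paper. Both introduce the southwest arrangement $\widetilde{\AAA}_J=\AAA_J\cup\{H_{x_j}:j\in J\}$, compute its $h$-sequence as $\stair(J)_i+[i\in J]$, invoke Theorem~\ref{thm:sw-basis} for its monomial basis, and then use the colon injection $\times\prod_{j\in J}x_j$ to push $\MMM(J)$ into that basis; the only cosmetic differences are that you apply Lemma~\ref{lem:derivation_colon} directly to the pair $\AAA_J\subseteq\widetilde{\AAA}_J$ (the paper instead factors through Proposition~\ref{prop:st-general}(3) and $\widetilde{f}_J$) and you obtain the dimension count from Lemma~\ref{lem:st-hilbert-series} rather than citing \cite{RW}.
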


\begin{proof}
Rhoades and Wilson showed \cite{RW} that the cardinality of $\MMM(J)$ coincides with the dimension of the vector space 
$S/( (S^{\symm_n}_+) : f_J)$, so it is enough to show that $\MMM(J)$ is $\KK$-linearly independent in $S/( (S^{\symm_n}_+) : f_J)$.
(This also follows from Lemma \ref{lem:st-hilbert-series} (3) and Corollary~\ref{cor:generating-set}.)

We introduce the modified polynomial $\widetilde{f}_J \in S$ given by
\begin{equation}
\widetilde{f}_J := \prod_{j \, \in \, J} \left(  \prod_{i \, = \, j+1}^n (x_j - x_i)  \right).
\end{equation}
That is, the polynomial $\widetilde{f}_J$ is the same as the polynomial $f_J$, but with the $x_j$ factors for $j \in J$  removed. Let $\AAA \subseteq \AAA_{\widetilde{\Phi}^+}$
be the subarrangement
\begin{equation}
\AAA := \{ H_{j,i} \,:\, j \notin J, \, i = j+1, \dots, n \} \cup \{ H_{x_1}, H_{x_2}, \dots, H_{x_n} \}=\AAA_J \cup\{H_{x_k} \,:\, k \in J\}.
\end{equation}
Observe that $\AAA$ is a southwest arrangement. Furthermore, we have an injection
\begin{equation}
\label{eq:important-injection}
S/ ((S^{\symm_n}_+) : f_J) \xrightarrow{ \, \, \times \prod_{j \in J} x_j \, \, } S / ((S^{\symm_n}_+) : \widetilde{f}_J) = \ST(\AAA,\ii)
\end{equation}
where the equality $S / ((S^{\symm_n}_+) : \widetilde{f}_J) = \ST(\AAA,\ii)$
is a consequence of Proposition~\ref{prop:sw-free} and Proposition~\ref{prop:st-general} (3).
If $h(\AAA) = (h_1, \dots, h_n)$, it can be checked that
\begin{equation}
h_k = \begin{cases}
\stair(J)_k & k \notin J, \\
\stair(J)_k + 1 & k \in J.
\end{cases}
\end{equation}
Theorem~\ref{thm:sw-basis} says that the set of monomials
\begin{equation}
\left\{
x_1^{a_1} \cdots x_n^{a_n} \,:\,
\begin{array}{c}
a_k < \stair(J)_k \text{ if $k \notin J$,} \\
a_k < \stair(J)_k + 1 \text{ if $k \in J$}
\end{array}
\right\}
\end{equation}
descends to a basis of $S / ((S^{\symm_n}_+) : \widetilde{f}_J)$.
The injection \eqref{eq:important-injection} applies to show that 
\begin{equation}
\MMM(J) = \{ x_1^{a_1} \cdots x_n^{a_n} \,:\, a_k < \stair(J)_k \text{ for all $k$} \}
\end{equation}
is linearly independent in 
$S/( (S^{\symm_n}_+) : f_J)$.
\end{proof}

The Sagan--Swanson Conjecture~\ref{conj:ss} is a consequence of Theorem~\ref{thm:rw} 
and Theorem~\ref{thm:ss-J}.

\begin{corollary}
\label{cor:ss}
The set $\MMM$ of superspace Artin monomials descends to a basis of $SR$.
\end{corollary}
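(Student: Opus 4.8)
The plan is to obtain Corollary~\ref{cor:ss} as an immediate consequence of two results already in hand: the transfer principle of Rhoades and Wilson (Theorem~\ref{thm:rw}) and the commutative-algebra statement Theorem~\ref{thm:ss-J}. Theorem~\ref{thm:rw} says that whenever one can produce, for each $J \subseteq [n]$, a set $\BBB(J) \subseteq S$ of homogeneous polynomials descending to a $\KK$-basis of $S/((S^{\symm_n}_+):f_J)$, the disjoint union $\bigsqcup_{J \subseteq [n]} \BBB(J)\cdot\theta_J$ descends to a basis of $SR$. So I would simply take $\BBB(J) := \MMM(J)$: with this choice the hypothesis of Theorem~\ref{thm:rw} is precisely the assertion of Theorem~\ref{thm:ss-J}, and the resulting basis $\bigsqcup_{J \subseteq [n]} \MMM(J)\cdot\theta_J$ is by definition the set $\MMM$ of superspace Artin monomials. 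This proves the corollary, and with it the Sagan--Swanson Conjecture~\ref{conj:ss}.

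All the substance therefore sits in Theorem~\ref{thm:ss-J}, so I would organize that argument as follows. Since \cite{RW} already computes $|\MMM(J)| = \dim_\KK S/((S^{\symm_n}_+):f_J)$, it suffices to prove $\MMM(J)$ is $\KK$-linearly independent modulo $(S^{\symm_n}_+):f_J$. The key maneuver is multiplication by $\prod_{j \in J} x_j$, which gives an injection of $S/((S^{\symm_n}_+):f_J)$ into $S/((S^{\symm_n}_+):\widetilde{f}_J)$; by Proposition~\ref{prop:st-general}(3) this target is the Solomon--Terao algebra $\ST(\AAA,\ii)$ of the southwest arrangement $\AAA = \AAA_J \cup \{H_{x_k} : k \in J\}$, whose $h$-sequence is computed to be $h_k = \stair(J)_k$ for $k \notin J$ and $h_k = \stair(J)_k + 1$ for $k \in J$. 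Under this injection $\MMM(J)$ lands inside the monomial set $\{x_1^{a_1}\cdots x_n^{a_n} : a_k < h_k\}$, which descends to a basis of $\ST(\AAA,\ii)$ by Theorem~\ref{thm:sw-basis}; linear independence of $\MMM(J)$ follows since a subset of a basis is linearly independent and the injection reflects dependence relations.

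Theorem~\ref{thm:sw-basis} is the technical heart, and I would prove it by double induction on $n$ and on $|\AAA|$, with base cases $n = 1$ and $|\AAA| = n$ both trivial. For the inductive step, let $p := \max\{k : H_k \in \AAA\}$; by Lemma~\ref{lem:sw-addition} the triple $(\AAA\setminus H_p, \AAA, \AAA^{(p)})$ satisfies Terao's Addition--Deletion hypotheses, so Lemma~\ref{lem:st-short-exact}(2) yields the short exact sequence $0 \to \ST(\AAA\setminus H_p,\ii) \xrightarrow{\,\times x_p\,} \ST(\AAA,\ii) \to \ST(\AAA^{(p)},\ii) \to 0$. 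Lemma~\ref{lem:sw-recursion} tracks the effect on $h$-sequences, and one splits into the case $h_p > 1$, where the sequence partitions the target monomial set cleanly, and the case $h_p = 1$, where $\AAA\setminus H_p$ is no longer essential so that $\ST(\AAA\setminus H_p,\ii) = 0$ and restriction to $H_p$ is an isomorphism. The main obstacle throughout is not the induction itself but the identification of the relevant colon ideals with Solomon--Terao ideals; this is exactly where the cospanning Lemma~\ref{lem:cospan_containment} and the Poincar\'e-duality colon-ideal criterion (Lemma~\ref{lem:colon-ideal-generation}, applied through Lemma~\ref{lem:derivation_colon}) are indispensable, since they replace the intractable Gr\"obner theory of $(S^{\symm_n}_+):f_J$ by a clean dimension count of free arrangements.
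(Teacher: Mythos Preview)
Your proof is correct and follows exactly the paper's approach: the corollary is an immediate consequence of Theorem~\ref{thm:rw} applied with $\BBB(J) = \MMM(J)$, the hypothesis being supplied by Theorem~\ref{thm:ss-J}. Your second and third paragraphs accurately sketch the proofs of Theorems~\ref{thm:ss-J} and~\ref{thm:sw-basis} as they appear in the paper, but these are already established results and are not needed for the corollary itself.
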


In his Ph.D. thesis at York University, Kelvin Chan conjectured \cite{Chan} a (non-monomial) bihomogeneous basis of $SR$ which respects the $\symm_n$-isotypic decomposition. 
Proving that Chan's conjectural basis is a basis would imply the bigraded $\symm_n$-structure of $SR$ predicted by the Fields Group. 
However, since the supercommutative-to-commutative transfer principle in Theorem~\ref{thm:rw} is non-equivariant, new ideas will be required to prove Chan's conjecture.

It may also be interesting to find an analogue of Steinberg's {\em descent basis} \cite{SteinbergP} for the coinvariant ring $S/(S^{\symm_n}_+)$ for the superspace coinvariants $SR$. Such an object could aid in understanding the 0-Hecke module structure of $SR$; see \cite{HR}. The descent basis consists of monomials, so a transfer principle like Theorem~\ref{thm:rw} might be more applicable here.

\section{Conclusion}
\label{Conclusion}

The Solomon--Terao algebras $\ST(\AAA,\ccc)$ 
have seen two major applications.
When $\AAA = \AAA_\III$ is an ideal subarrangement of a Weyl arrangement and $\aaa: \partial_i \mapsto x_i$,
the algebra $\ST(\AAA,\aaa)$ presents the cohomology of a regular nilpotent Hessenberg variety \cite{AHMMS, AMMN, HHMPT}.
When $\AAA$ is a southwest subarrangement of the augmented Weyl arrangement $\AAA_{\widetilde{\Phi}^+}$ of type A or $\AAA = \AAA_J$ and
$\ii: \partial_i \mapsto 1$, we established connections between $\ST(\AAA,\ii)$ and the type A superspace coinvariant ring.
This motivates the following question.

\begin{question}
\label{q:other-types}
Do the augmented root system $\widetilde{\Phi}^+$, the southwest arrangements, or the $\AAA_J$ arrangements extend to types other than A?
\end{question}

An answer to Question~\ref{q:other-types} should be a class of free arrangements whose algebraic properties are tied 
to the invariant degrees of the Weyl group in question.
Ideally, an answer to Question~\ref{q:other-types} could be applied to
superspace coinvariant rings of other types.
If $W$ is a complex reflection group acting on 
$V = \CC^n$, the natural action 
of $W$ on $V$ induces an 
action on the space $\Omega$ of polynomial-valued
differential forms on $V$. Solomon \cite{Solomon}
gave a uniform description of the
structure of the invariant ring $\Omega^W$.
The quotient $SR_W := \Omega/(\Omega^W_+)$ is a 
bigraded $W$-module, where $(\Omega_+^W) \subseteq \Omega$ is the ideal generated by $W$-invariants with vanishing constant term.
Sagan and Swanson conjectured \cite[Conj. 6.17]{SS}
a basis for $SR_W$ when $W = \symm_{\pm n}$ is the 
group of $n \times n$ signed permutation matrices.
Proving their conjecture could involve establishing a signed permutation analog of Theorem~\ref{thm:rw} so that bases of $SR_W$ could be found using commutative algebra alone.

In type $W = \symm_n$, the dimension of $SR_W = SR$ is given by the number of faces in the {\em Coxeter complex} attached to $W$, i.e. the number of ordered 
set partitions of $[n]$. 
The singly-graded fermionic Hilbert series is the 
reversal of the $f$-vector of the Coxeter complex
(counting ordered set partitions by the number 
of their blocks).
Sagan and Swanson conjecture \cite{SS} that the same is true for $W = \symm_{\pm n}$ has type B$_n$. However, in type F$_4$, the fermionic Hilbert series
is $1152 q^0 + 2304 q^1 + 1396 q^2 + 244 q^3 + q^4$
whereas the reversed $f$-vector is 
$(1152, 2304, 1392, 240,1)$.
Can the theory of arrangements explain this near-miss?

\begin{question}
\label{q:geometry}
Let $\AAA$ be a southwest arrangement. Is there a cohomological interpretation of $\ST(\AAA,\ii)$?
What about $\ST(\AAA_J,\ii)$ for $J \subseteq [n]$?
\end{question}

Proposition~\ref{prop:st-general} gives an interpretation of the Solomon--Terao algebras in Question~\ref{q:geometry} in terms of colon ideal quotients.
In \cite{AHMMS} the cohomology rings of regular nilpotent Hessenberg varieties are presented as quotients by colon ideals;
the methods of \cite{AHMMS} may help in addressing Question~\ref{q:geometry}.
We remark that the polynomials $f_J$ bear a resemblance to ideal generators for a Borel-type presentation of the cohomology of regular nilpotent 
Hessenberg varieties in type A due to 
Abe-Harada-Horiguchi-Masuda \cite{AHHM} and 
reinerpreted and generalized using arrangements by 
Abe-Horiguchi-Masuda-Murai-Sato \cite{AHMMS}.

\begin{question}
\label{q:st-general}
Do other Solomon--Terao algebras $\ST(\AAA,\ccc)$ 
have applications outside of hyperplane arrangements?
\end{question}

When $\AAA$ is free, the Solomon--Terao algebra $\ST(\AAA,\ccc)$ is a Poincar\'e duality algebra as soon as it is of positive finite $\KK$-dimension,
so Question~\ref{q:st-general} would appear well suited to geometric answers.
On the other hand, when $\AAA$ is not free, it can be difficult to understand the structure of $\Der(\AAA)$, and the algebra $\ST(\AAA,\ccc)$ 
can be correspondingly mysterious.
It would be interesting to calculate and find applications for Solomon--Terao algebras outside the bounds of freeness.

\section*{Acknowledgements}

This project was initiated at the Minnesota Research Workshop in Algebra and Combinatorics (MRWAC) at the University of Minnesota in May 2023.
We thank Elise Catania, Sasha Pevzner, and Sylvester Zhang for organizing this workshop.
Further discussions took place at the Hokkaido Summer Institute at the University of Hokkaido and in the Tokyo Metropolitan Government Building in Summer 2023.
We thank the University of Minnesota, the University of Hokkaido, and the Tokyo Metropolitan Government for providing excellent working conditions.
The authors are grateful to Vic Reiner for helpful conversations. 
The authors are grateful to Sylvester Zhang for very fruitful discussions and helpful assistance throughout this project.
The authors thank two anonymous referees for their comments which substantially improved the exposition.
P. Commins was partially supported by the NSF GRFP Award \# 2237827.
S. Murai was partially supported by KAKENHI 21K0319, 21H00975 and 23K17298.
B. Rhoades was partially supported by NSF Grant DMS-2246846.


\section{Appendix: Proof of Lemma~\ref{lem:cospan_containment}}
\label{sec:appendix}

Throughout this appendix, write $V = \KK^n$ and $V^* = \KK \cdot \{x_1, \dots, x_n\}$ where $x_i$ is the $i^{th}$ standard coordinate function on $V$. Before proving Lemma~\ref{lem:cospan_containment}, we recall some useful facts about the classical coinvariant ideal $(S^{\symm_n}_+)$.

\subsection{Symmetric polynomials and $(S^{\symm_n}_+)$}
We need standard terminology related to symmetric polynomials.
For $n \geq 0$, a {\em partition} $\lambda$ of $n$ is a weakly decreasing sequence $\lambda = (\lambda_1 \geq \lambda_2 \geq \cdots )$ 
which satisfies $\lambda_1 + \lambda_2 + \cdots = n$.  We write $\lambda \vdash n$ to mean that $\lambda$ is a partition of $n$,
and allow trailing zeros at the end of a partition, so that $(4,2,2,1) = (4,2,2,1,0,0) \vdash 9$.
We write $\ell(\lambda)$ for the number of nonzero parts of $\lambda$.

For $d > 0$, the {\em elementary symmetric polynomial} of degree $d$ is the polynomial $e_d \in S$ given by the 
sum of all squarefree monomials in $S$ of degree $d$:
\begin{equation}
e_d := \sum_{1 \leq i_1 < \cdots < i_d \leq n} x_{i_1} \cdots x_{i_d}.
\end{equation}
(These are not to be confused with the exponents of a free arrangement.)
The {\em complete homogeneous symmetric polynomial} of degree $d$ is given by the sum of all monomials in $S$ of degree $d$:
\begin{equation}
h_d := \sum_{1 \leq i_1 \leq \cdots \leq i_d \leq n} x_{i_1} \cdots x_{i_d}.
\end{equation}
By convention, we let $e_0 = h_0 = 1$ and $e_d = h_d = 0$ when $d < 0$.
If $\lambda = (\lambda_1, \lambda_2, \dots )$ is a partition, the {\em Schur polynomial} $s_\lambda \in S$ may be defined by the 
{\em Jacobi-Trudi formula}
\begin{equation}
s_\lambda =  \det \left( h_{\lambda_i-i+j} \right)_{1 \leq i,j \leq \ell(\lambda)}.
\end{equation}

If $A \subseteq [n]$ and $f \in S^{\symm_n}$ is a symmetric polynomial, write $f(A)$ for the polynomial $f$ in the variable set $\{ x_a \,:\, a \in A \}$ 
obtained by setting all variables with indices outside of $A$ to zero.
We write $\delta_n \in S$ for the classical Vandermonde determinant
\begin{equation}
\delta_n := \prod_{1 \, \leq i \, < \, j \, \leq \, n} (x_i - x_j) = 
\sum_{w \, \in \, \symm_n} \sign(w) \cdot x_{w(1)}^{n-1} x_{w(2)}^{n-2} \cdots x_{w(n)}^0.
\end{equation}

\begin{lemma}
    \label{lem:coinvariant-tricks}
    Recall that $(-) \odot (-): S \times S \rightarrow S$ is the action of $S$ on itself by partial differentiation.
    \begin{enumerate}
        \item Given $f \in S$, we have $f \in (S^{\symm_n}_+)$ if and only if $f \odot \delta_n = 0$.
        \item If $A \subseteq [n]$ and a partition $\lambda$ satisfies  $\lambda_1 > n-|A|$, we have  $s_\lambda(A) \in (S^{\symm_n}_+)$.
        \item If $[n] = A \sqcup B$   we have $h_d(A) \equiv (-1)^d e_d(B)$  for each $d \geq 0$ where the congruence is modulo $(S^{\symm_n}_+)$.
    \end{enumerate}
\end{lemma}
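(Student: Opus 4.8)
The plan is to handle the three parts in order, using the classical fact that $S/(S^{\symm_n}_+)$ is the coinvariant algebra, isomorphic as a graded $\symm_n$-module to the regular representation, with top degree $\binom{n}{2}$ and one-dimensional socle spanned by the image of $\delta_n$. For part (1), the statement $f \in (S^{\symm_n}_+) \iff f \odot \delta_n = 0$ is a well-known description of the Macaulay inverse system of the coinvariant ideal: the ideal $(S^{\symm_n}_+)$ equals the annihilator (under the $\odot$-action) of the cyclic $S$-submodule $S \odot \delta_n$, and this submodule is exactly the $\symm_n$-harmonics. I would recall that $h_d \odot \delta_n = 0$ for $d>0$ since $\partial h_d$ lowers degree and the power-sum generators $p_d = \sum x_i^d$ satisfy $p_d \odot \delta_n = 0$ for $d \geq 1$ (each monomial of $\partial p_d$ differentiates $\delta_n$ down into degree $\binom{n}{2}-d$, and an antisymmetry/pigeonhole argument shows the result vanishes); since the $p_d$ generate $(S^{\symm_n}_+)$, one containment follows, and equality follows by comparing dimensions in each degree (both the quotient by the ideal and the quotient by the annihilator have dimension $n!$, realized degree-by-degree via the harmonics).

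For part (2), I would argue via the Jacobi--Trudi formula together with part (1): it suffices to show $s_\lambda(A) \odot \delta_n = 0$ whenever $\lambda_1 > n - |A|$. Write $[n] = A \sqcup B$ with $|B| = n - |A| < \lambda_1$. Specializing the variables outside $A$ to zero kills the corresponding factors, and $s_\lambda(A)$ is a symmetric polynomial in only $|A|$ variables; but a Schur polynomial in $k$ variables indexed by a partition with first part $\lambda_1 > \dots$ — more precisely with more than $k$ rows — vanishes, while if it has at most $k$ rows it need not. The correct mechanism is instead: expand $\delta_n = \delta_A \cdot \delta_B \cdot \prod_{a \in A, b \in B}(x_a - x_b)$ and observe that $s_\lambda(A)$, having $\lambda_1$ larger than the number of $B$-variables available, when applied via $\odot$ cannot be ``absorbed'' — concretely, reduce modulo $(S^{\symm_n}_+)$ using the Pieri-type expansion and the fact (to be proven as part (3), or used independently) that high-degree symmetric functions in a variable subset reduce to signed symmetric functions in the complementary subset, whose degree cannot exceed $|B| < \lambda_1$, forcing a row of length $>|B|$ in a $|B|$-variable Schur function, hence $0$. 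I would streamline this by proving (3) first and then deriving (2) from (3) plus the determinantal formula.

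For part (3), the key identity is the generating-function relation $\left(\sum_{d \geq 0} h_d(A)\, t^d\right)\left(\sum_{d \geq 0} e_d(B)(-t)^d\right) \equiv \sum_{d\geq 0} h_d([n])(-1)^{?}\cdots$ — more cleanly: since $\prod_{a \in A}(1 - x_a t)^{-1} \cdot \prod_{b \in B}(1 - x_b t) = \prod_{a \in A}(1-x_a t)^{-1}\prod_{b\in B}(1-x_bt)$, and modulo $(S^{\symm_n}_+)$ every positive-degree elementary symmetric polynomial $e_d([n])$ vanishes (these generate the ideal), we get $\prod_{i=1}^n (1 - x_i t) \equiv 1$, hence $\prod_{a \in A}(1 - x_a t) \equiv \prod_{b \in B}(1 - x_b t)^{-1}$ modulo the ideal in the ring of power series in $t$ with coefficients in $S/(S^{\symm_n}_+)$. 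Taking the coefficient of $t^d$ on both sides: the left side's coefficient is $(-1)^d e_d(A)$... so I would instead use $\prod_{a\in A}(1-x_at)^{-1} = \sum h_d(A) t^d$ and $\prod_{b\in B}(1-x_bt) = \sum (-1)^d e_d(B) t^d$, and multiply $1 = \prod_{i=1}^n(1-x_it) \equiv$ (the relation giving $\sum_d h_d(A)t^d \cdot \sum_d (-1)^d e_d(B) t^d \equiv \prod_{i=1}^n(1-x_it)^{-1}\cdot\prod_{i=1}^n(1-x_it) = 1$)? The honest route: $\left(\sum_d h_d(A)t^d\right) = \prod_{a\in A}\frac{1}{1-x_at}$ and $\prod_{i=1}^n(1-x_it) \equiv 1$ gives $\prod_{a\in A}\frac1{1-x_at} \equiv \prod_{b\in B}(1-x_bt) = \sum_d(-1)^de_d(B)t^d$; extracting coefficients of $t^d$ yields $h_d(A) \equiv (-1)^d e_d(B)$ as desired.

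The main obstacle I anticipate is part (2): getting a clean, self-contained argument rather than hand-waving about ``rows longer than the number of variables.'' I would resolve it by first establishing (3), then writing $s_\lambda(A)$ via the Jacobi--Trudi determinant in the $h_d(A)$, replacing each $h_d(A)$ by $(-1)^d e_d(B)$ modulo $(S^{\symm_n}_+)$, and observing that the resulting determinant is (up to sign) a Jacobi--Trudi-type expression for a Schur polynomial in the $|B|$ variables $\{x_b : b \in B\}$ indexed by the conjugate partition $\lambda'$; since $\lambda_1 > |B|$ forces $\lambda'$ to have more than $|B|$ parts, that Schur polynomial is identically zero, giving $s_\lambda(A) \equiv 0$. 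Parts (1) and (3) I expect to be routine given the standard theory recalled above.
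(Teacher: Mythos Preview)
Your arguments for (1) and (3) match the paper's: (1) is attributed to Steinberg, and (3) is proved exactly via the generating-function identity $\prod_{i=1}^n(1-x_it)\equiv 1$ that you arrive at in your ``honest route.''

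For (2), your approach is correct but genuinely different from the paper's. The paper proves (2) \emph{independently} of (3): it first handles the one-row case $h_d(A)\in(S^{\symm_n}_+)$ for $d>n-|A|$ by induction on $n-|A|$ using the identity $h_d(A)=h_d(A\cup\{a\})-x_a h_{d-1}(A\cup\{a\})$, and then treats general $\lambda$ by expanding the Jacobi--Trudi determinant along its first row, each cofactor being multiplied by some $h_{\lambda_1+j-1}(A)$ already known to lie in the ideal. Your route instead proves (3) first and feeds it into the full Jacobi--Trudi determinant, yielding after the sign bookkeeping the identity $s_\lambda(A)\equiv(-1)^{|\lambda|}s_{\lambda'}(B)\pmod{(S^{\symm_n}_+)}$ via the dual Jacobi--Trudi formula, which then vanishes because $\ell(\lambda')=\lambda_1>|B|$. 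Your argument is arguably cleaner and produces the extra complementation identity $s_\lambda(A)\equiv(-1)^{|\lambda|}s_{\lambda'}(B)$ as a byproduct; the paper's argument has the virtue of keeping (2) and (3) logically independent and avoiding the dual Jacobi--Trudi identity.
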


\begin{proof}
    (1) is a well-known result of Steinberg \cite{Steinberg}.

    For (2), suppose first that $\lambda$ has only one part, so that
    $s_{(\lambda_1)} = h_{\lambda_1}$. We have the identities
    \begin{equation}
    \label{eq:h-identity}
    h_d(A) =h_d(A \cup \{a\}) - x_a h_{d-1}(A \cup \{a\}) 
    \end{equation}
    for all $a \not \in A$ and the membership 
    \begin{equation}
    \label{eq:h-membership}
    \text{$h_d  = h_d([n]) \in (S^{\symm_n}_+)$ for all $d > 0$.}
    \end{equation} 
    Using the identity \eqref{eq:h-identity} and the membership \eqref{eq:h-membership}, (2) follows by downward induction on $n - |A|$.
   
    We now prove (2) for general partitions $\lambda$.
    The Jacobi-Trudi identity says
    that
    \[
    s_\lambda(A) = \det \left( h_{\lambda_i-i+j}(A) \right)_{1 \leq i,j \leq \ell(\lambda)}
    \]
    and the conclusion of (2) follows by expanding
    this determinant along the first row, applying that $s_{(\lambda_1)}(A) \in  (S^{\symm_n}_+)$.
    
    For (3), write $R = S/(S^{\symm_n}_+)$ for the classical coinvariant ring. We introduce a new variable $t$ and work in the ring $R[t]$. In the ring $R[t]$ there holds the identity
    \begin{equation}
    \label{eq:coinvariant-tricks-one}
    \prod_{i \, = \, 1}^n (1 - x_i t) = 1
    \end{equation}
    since the coefficient of $t^d$ on the left hand side of \eqref{eq:coinvariant-tricks-one} is $(-1)^d e_d$. For any $1 \leq i \leq n$, the special case $A = \{i\}$ and $\lambda = (n)$ of (2) says that $x_i^n \in (S^{\symm_n}_+)$, so we have $x_i^n = 0$ in $R$. In particular, the expression 
    \[
    \frac{1}{1 - x_i t} := 1 + x_i t + x_i^2 t^2 + x_i^3 t^3 + \cdots 
    \]
    gives a well defined element of $R[t]$. It is not hard to see that $(1 - x_i t)$ and $\frac{1}{1 - x_i t}$ are mutually inverse in $R[t]$.
    We multiply both sides of \eqref{eq:coinvariant-tricks-one} by $\prod_{a \, \in \, A} \frac{1}{1- x_a t}$ to obtain the identity
    \begin{equation}
    \label{eq:coinvariant-tricks-two}
    \prod_{b \, \in \, B} (1 - x_b t) =
    \prod_{a \, \in \, A} \frac{1}{1 - x_a t}
    \end{equation}
    in $R[t].$
    Comparing
    the coefficients of $t^d$ on both sides, we see that $(-1)^d e_d(B) = h_d(A)$ in $R$, which is equivalent to (3).
\end{proof}

\subsection{Proving the cospanning lemma}
We have all the tools we need to prove Lemma~\ref{lem:cospan_containment}. For the convenience of the reader, we recall its statement.

\begin{replemma}{lem:cospan_containment}  Let $T \subseteq \widetilde{\Phi}^+$. We have $\prod_{\alpha \in T} \alpha \in (S^{\symm_n}_+)$  if and only if  the complement $\widetilde{\Phi}^+ - T$  of $T$ does not span $V^* = \KK \cdot \{x_1, \dots, x_n\}$.
\end{replemma}

\begin{proof}[Proof of Lemma~\ref{lem:cospan_containment}]
    In this proof we will use the well-known fact that
    \begin{equation}
        \label{eq:schur-vanish}
        s_\lambda(A) = 0 \text{ if } \ell(\lambda) > |A|.
    \end{equation}
The vanishing property \eqref{eq:schur-vanish} follows from the combinatorial interpretation of $s_\lambda$ as the content generating function
of semistandard tableaux of shape $\lambda$. We write
\[
T^c := \widetilde{\Phi}^+ - T
\]
for the complement of $T$ in $\widetilde{\Phi}^+$.

    Suppose first that $T^c$  does not span $V^*$. Define a subset $S \subseteq [n]$ by
    \begin{equation*}
        B := \{1 \leq b \leq n \,:\, x_b \in \mathrm{span}_\KK(T^c) \}.
    \end{equation*}
    Since $T^c$ does not span $V^* = \KK \cdot \{x_1, \dots, x_n\}$ we have $B \neq [n]$. Define 
    \[
    A := [n] - B
    \]
    to be the complement of $B$ within $[n]$. We know that $A \neq \varnothing$ and 
    \begin{center}
    $x_a \in T$ for each $a \in A$
    and $\pm (x_a - x_b) \in T$ for all $a \in A$ and $b \in B$.
    \end{center}
     It suffices to establish the membership
    \begin{equation}
    \label{eq:new-desired-membership}
    \prod_{a \, \in \, A} x_a \times 
    \prod_{\substack{a \, \in \, A \\ b \, \in \, B}} (x_a - x_b) \in 
    (S^{\symm_n}_+)
     \end{equation}
     since the displayed polynomial divides  $\prod_{\alpha \in T} \alpha$.

     We turn to proving the membership \eqref{eq:new-desired-membership}. Expanding the product $\prod_{\substack{a \in A \\ b \in B}} (x_a - x_b)$ yields
    \begin{align}
    \label{eq:containment-one}
        \prod_{a \, \in \, A} x_a \times 
    \prod_{\substack{a \, \in \, A \\ b \, \in \, B}} (x_a - x_b) &=
    \prod_{a \, \in \, A} x_a \times 
    \prod_{a \, \in \, A} \left(
    \sum_{d \, = \, 0}^{|B|} (-1)^d x_a^{|B| - d}
    e_d(B)
    \right) \\ 
\nonumber    &\equiv
    \prod_{a \, \in \, A} x_a \times 
    \prod_{a \, \in \, A} \left(
    \sum_{d \, = \, 0}^{|B|} x_a^{|B| - d}
    h_d(A) \right) \mod (S^{\symm_n}_+)
    \end{align}
    where the congruence  applies Lemma~\ref{lem:coinvariant-tricks} (3). However, we have
    \begin{multline}
        \label{eq:membership-congruence-helper}
        \prod_{a \, \in \, A} x_a \times h_{|B|}(A)
        = e_{|A|}(A) \cdot h_{|B|}(A) = 
        s_{\left(|B|,1^{|A|}\right)}(A) +
        s_{\left(|B|+1,1^{|A|-1}\right)}(A)  \\ =
        s_{\left(|B|+1,1^{|A|-1}\right)}(A)
        \in (S^{\symm_n}_+)
     \end{multline}
where the second equality uses the Pieri rule and the third uses the fact that number of parts of $(|B|,1^{|A|})$ is $|A|+1$ together with the vanishing property \eqref{eq:schur-vanish}. The ideal membership is justified by 
    Lemma~\ref{lem:coinvariant-tricks} (2). Equation~\eqref{eq:containment-one}  therefore continues as 
\begin{align}
    \prod_{a \, \in \, A} x_a \times 
    \prod_{a \, \in \, A} \left(
    \sum_{d \, = \, 0}^{|B|} x_a^{|B| - d}
    h_d(A) \right)  &\equiv \prod_{a \in A} x_a \times \prod_{a \in A} \left( \sum_{d\, =\, 0}^{|B|-1} x_a^{|B|-d} h_d(A) \right)  \\
    &= \left( \prod_{a \, \in \, A} x_a \right)^2 \times 
    \prod_{a \, \in \, A} \left(
    \sum_{d \, = \, 0}^{|B|-1} x_a^{|B| - d - 1}
    h_d(A) \right). \label{eq:containment-three}
\end{align}
The congruence $\equiv$ is modulo $(S^{\symm_n}_+)$ (and is justified by the membership \eqref{eq:membership-congruence-helper}) while the equality pulls out a factor of $x_a$ from each term of the sum in parentheses.
    We have the membership
    \begin{equation}
        \label{eq:containment-four}
        \left( \prod_{a \, \in \, A} x_a \right)^2
        \times 
        h_{|B|-1}(A) = e_{|A|}(A)^2 \cdot 
        h_{|B|-1}(A) = 
        s_{\left(|B|+1,2^{|A|-1}\right)}(A) \in 
        (S^{\symm_n}_+)
    \end{equation}
    where the second equality follows from the Pieri rule and the vanishing property \eqref{eq:schur-vanish}. The ideal membership $s_{\left(|B|+1,2^{|A|-1}\right)}(A) \in 
        (S^{\symm_n}_+)$ is again justified
    by Lemma~\ref{lem:coinvariant-tricks} (2). 
    The expression~\eqref{eq:containment-three} may be rewritten as
   \begin{align}
    \left( \prod_{a \, \in \, A} x_a \right)^2 \times 
    \prod_{a \, \in \, A} \left(
    \sum_{d \, = \, 0}^{|B| - 1} x_a^{|B| - d - 1}
    h_d(A) \right)  &\equiv \left( \prod_{a \in A} x_a \right)^2 \times \prod_{a \in A} \left( \sum_{d\, =\, 0}^{|B|-2} x_a^{|B|-d-1} h_d(A) \right)  \\
    &= \left( \prod_{a \, \in \, A} x_a \right)^3 \times 
    \prod_{a \, \in \, A} \left(
    \sum_{d \, = \, 0}^{|B|-2} x_a^{|B| - d - 2}
    h_d(A) \right) 
\end{align}
    Continuing in this fashion, we eventually see that the quantity in Equation~\eqref{eq:containment-one} satisfies
    \begin{equation}
    \prod_{a \, \in \, A} x_a \times 
    \prod_{a \, \in \, A} \left(
    \sum_{d \, = \, 0}^{|B|} x_a^{|B| - d}
    h_d(A) \right) \equiv
    \left( \prod_{a \, \in \, A} x_a \right)^{|B|+1} = s_{\left((|B|+1)^{|A|}\right)}(A) 
    \equiv 0 \mod (S^{\symm_n}_+).
    \end{equation}
    The equality is the definition of the Schur function indexed by an $|A|$-by-$(|B|+1)$ rectangle and the congruence once again applies Lemma~\ref{lem:coinvariant-tricks} (2) together with the fact that $A \neq \varnothing$. This completes the proof in the case where $T^c$ does not span $V^*$.

    Now assume that $T^c$ does span $V^* = \KK \cdot \{x_1, \dots, x_n\}$. Define a graph $\Gamma$ on the vertex set $[n]$ by letting $i \sim j$ be an edge in $\Gamma$ for all $1 \leq i < j \leq n$ such that $x_i - x_j \in T^c$. Since $T^c$ spans $V^*$, each connected component $C$ of $\Gamma$ contains at least one vertex $1 \leq i \leq n$ such that $x_i \in T^c$. By  selecting a distinguished such vertex from each connected component and removing edges from $\Gamma$ if necessary, we obtain a forest $F$ on the vertex set $[n]$ with trees 
    \[F = \tau_1 \sqcup \dots \sqcup \tau_r\]
    such that
    \begin{itemize}
        \item for all edges $i < j$ we have $x_i - x_j \in T^c$, and
        \item for each $1 \leq p \leq r$  there exists a distinguished vertex $i_p$ in $\tau_p$  such that   $x_{i_p} \in T^c$.
    \end{itemize}
    It follows that $\prod_{\alpha \in T} \alpha$ divides
    \begin{equation}
    \label{eq:containment-six}
    \prod_{p \, = \, 1}^r 
    \prod_{\substack{i \, \in \, \tau_p \\ i \, \neq \, i_p}}
    x_{i} \times
    \prod_{\substack{1 \, \leq i \, < \, j \, \leq \, n \\ i \, \not\sim_F \, j}} (x_i - x_j)
    \end{equation}
    where in the second product $i \not\sim_F j$ means that $i$ and $j$ are not connected by an edge in $F$.

    It suffices to show that  the expression~\eqref{eq:containment-six} does not lie in $(S^{\symm_n}_+)$. Equivalently (by Lemma~\ref{lem:coinvariant-tricks} (1)) we need to prove 
    \begin{equation}
        \label{eq:containment-seven}
        \left(
    \prod_{p \, = \, 1}^r 
    \prod_{\substack{i \, \in \, \tau_p \\ i \, \neq \, i_p}}
    x_{i}  \times
    \prod_{\substack{1 \, \leq \, i \, < \, j \, \leq \, n \\ i \, \not\sim_F \, j}} (x_i - x_j)
    \right) \odot \delta_n \neq 0.
    \end{equation}
    where $(-) \odot (-)$ is the action of $S$ on itself by partial differentiation. Since 
    \begin{equation}
    \label{eq:vandermonde-expansion}
        \delta_n = \sum_{w \, \in \, \symm_n} \sign(w) \cdot x^{n-1}_{w(1)} x^{n-2}_{w(2)} \cdots x^1_{w(n-1)} x^0_{w(n)}
    \end{equation}
    and the polynomial \eqref{eq:containment-six} is homogeneous of degree ${n \choose 2}$, we conclude that
    \begin{quote}
     $(\heartsuit)$ the monomials $m$ in the expansion of \eqref{eq:containment-six} for which $m \odot \delta_n \neq 0$ are precisely those of the form $m = x^{n-1}_{w(1)} x^{n-2}_{w(2)} \cdots x^1_{w(n-1)} x^0_{w(n)}$ for some $w \in \symm_n$.
    \end{quote}

    The expression \eqref{eq:containment-six} is almost the Vandermonde determinant $\delta_n$. In fact, if $t(i)$ is the vertex in $F$ adjacent to $i$ in the unique path from $i$ to $i_p$, we have
    \begin{equation}
        \label{eq:containment-eight}
        \prod_{p \, = \, 1}^r 
    \prod_{\substack{i \, \in \, \tau_p \\ i \, \neq \, i_p}}
    \left(x_i - x_{t(i)}\right) \times
    \prod_{\substack{1 \, \leq \, i \, < \, j \, \leq \, n \\ i \, \not\sim_F \, j}} (x_i - x_j) = \pm \prod_{1 \, \leq \, i \, < \, j \, \leq \, n} (x_i - x_j) =
    \pm \delta_n.
    \end{equation}
    The product in \eqref{eq:containment-eight} has ${n \choose 2}$ binomial factors, so its expansion has $2^{n \choose 2}$ terms. Since  \eqref{eq:vandermonde-expansion} has only $n!$ summands,  some of these terms will cancel. However, it is not hard to see that 
    \begin{quote}
        $(\diamondsuit)$ for each $w \in \symm_n$, the monomial $x^{n-1}_{w(1)} x^{n-2}_{w(2)} \cdots x^1_{w(n-1)} x^0_{w(n)}$ appears precisely once in the expansion of \eqref{eq:containment-eight}, with no cancellation.
    \end{quote}
    The expansion of the product \eqref{eq:containment-six} consists precisely of those terms in the expansion of \eqref{eq:containment-eight} for which $x_i$ (instead of $- x_{t(i)}$) is selected in the factor $x_i - x_{t(i)}$ for each $1 \leq p \leq r$ and $i \in \tau_p$ with $i \neq i_p$. Applying $(\diamondsuit)$, we conclude 
    \begin{quote}
        $(\clubsuit)$ for each $w \in \symm_n$, the monomial $x^{n-1}_{w(1)} x^{n-2}_{w(2)} \cdots x^1_{w(n-1)} x^0_{w(n)}$ appears at most once in the expansion of \eqref{eq:containment-six}, with no cancellation, and this monomial has the same sign as in the expansion of \eqref{eq:containment-eight}.
    \end{quote}
    In light of $(\heartsuit), (\diamondsuit),$ and $(\clubsuit)$, the desired nonvanishing property \eqref{eq:containment-seven} will hold if we can show that
    \begin{quote}
        $(\spadesuit)$ there exists some $w \in \symm_n$ such that the monomial $x^{n-1}_{w(1)} x^{n-2}_{w(2)} \cdots x^1_{w(n-1)} x^0_{w(n)}$ appears in the expansion of \eqref{eq:containment-six}.
    \end{quote}

    We prove $(\spadesuit)$ with the following combinatorial argument. The $2^{n \choose 2}$ terms in the monomial expansion of \eqref{eq:containment-eight} correspond to orientations $\OOO$ of the complete graph $K_n$ on the vertex set $[n]$. Here we identify an oriented edge $i \to j$ in $\OOO$ with choosing the second term in the factor $\pm (x_i - x_j)$ of \eqref{eq:containment-eight}. The terms in the monomial expansion of \eqref{eq:containment-six} correspond
    to orientations $\OOO'$ of $K_n$ in which  each tree $\tau_p$ of the forest $F$ is oriented away from its distinguished vertex $i_p$. To build an orientation $\OOO'$ which gives rise to a term as in $(\spadesuit)$, start with the partial orientation on $K_n$ given by orienting each tree $\tau_p$ of $F$ away from its distinguished vertex $i_p$. Choose a leaf $j_1$ of $F$ arbitrarily, and orient every non-oriented
    edge involving $j_1$ towards $j_1$. There are $n-1$ edges oriented towards $j_1$, so the factor $x_{j_1}^{n-1}$ will appear in the corresponding term.
    Ignoring $j_1$ and its incident edges, choose another leaf $j_2$ of $F$ arbitrarily, and orient every non-oriented edge involving $j_2$ towards $j_2$. There are   
    $n-2$ edges oriented towards $j_2$, so the  factor $x_{j_2}^{n-2}$ will appear in the corresponding term. Iterating, we obtain an orientation $\OOO'$
    which yields the term  $\pm x_{j_1}^{n-1} x_{j_2}^{n-2} \cdots x_{j_n}^0$, which corresponds to the  permutation $w \in \symm_n$ with one-line notation
    $w = [j_1, j_2, \dots, j_n]$. This proves $(\spadesuit)$ and Lemma~\ref{lem:cospan_containment}.
\end{proof}

\end{document}